\numberwithin{equation}{section}
\newcommand{\cG}{\mathcal{G}}
\newcommand{\vph}{\varphi}
\newcommand\atopn[2]{\genfrac{}{}{0pt}{}{#1}{#2}}
\newtheorem{theo}{Theorem}[section]
\newtheorem*{theo*}{Theorem}
\newtheorem{lemm}[theo]{Lemma}
\newtheorem{prop}[theo]{Proposition}
\newtheorem{coro}[theo]{Corollary}
\theoremstyle{definition}
\newtheorem{defi}[theo]{Definition}
\newtheorem{exam}[theo]{Example}
\newtheorem{rema}[theo]{Remark}
\newcommand{\rb}{\overline{r}}
\newcommand{\aw}{\mathfrak{aw}(n)}
\newcommand{\saw}{\mathfrak{saw}(n)}
\begin{document}
\allowdisplaybreaks

\newcommand{\arXivNumber}{2303.17677}

\renewcommand{\PaperNumber}{077}

\FirstPageHeading

\ArticleName{The Higher-Rank Askey--Wilson Algebra\\ and Its Braid Group Automorphisms}
\ShortArticleName{The Higher-Rank Askey--Wilson Algebra and Its Braid Group Automorphisms}

\Author{Nicolas CRAMP\'E~$^{\rm a}$, Luc FRAPPAT~$^{\rm b}$, Lo\"ic POULAIN D'ANDECY~$^{\rm c}$ and Eric RAGOUCY~$^{\rm b}$}
\AuthorNameForHeading{N.~Cramp\'e, L.~Frappat, L.~Poulain d'Andecy and E.~Ragoucy}

\Address{$^{\rm a)}$~Institut Denis-Poisson CNRS/UMR 7013 - Universit\'e de Tours - Universit\'e d'Orl\'eans,\\
\hphantom{$^{\rm a)}$}~Parc de Grandmont, 37200 Tours, France}
\EmailD{\href{mailto:crampe1977@gmail.com}{crampe1977@gmail.com}}

\Address{$^{\rm b)}$~Laboratoire d'Annecy-le-Vieux de Physique Th\'eorique LAPTh, Universit\'e Savoie Mont Blanc,\\
\hphantom{$^{\rm b)}$}~CNRS, F-74000 Annecy, France}
\EmailD{\href{mailto:luc.frappat@lapth.cnrs.fr}{luc.frappat@lapth.cnrs.fr}, \href{mailto:eric.ragoucy@lapth.cnrs.fr}{eric.ragoucy@lapth.cnrs.fr}}

\Address{$^{\rm c)}$~Laboratoire de math\'ematiques de Reims UMR 9008, Universit\'e de Reims Champagne-Ardenne,\\
\hphantom{$^{\rm c)}$}~Moulin de la Housse BP 1039, 51100 Reims, France}
\EmailD{\href{mailto:loic.poulain-dandecy@univ-reims.fr}{loic.poulain-dandecy@univ-reims.fr}}

\ArticleDates{Received April 12, 2023, in final form October 10, 2023; Published online October 18, 2023}

\Abstract{We propose a definition by generators and relations of the rank $n-2$ Askey--Wilson algebra $\aw$ for any integer $n$, generalising the known presentation for the usual case $n=3$. The generators are indexed by connected subsets of $\{1,\dots,n\}$ and the simple and rather small set of defining relations is directly inspired from the known case of $n=3$. Our first main result is to prove the existence of automorphisms of $\aw$ satisfying the relations of the braid group on $n+1$ strands. We also show the existence of coproduct maps relating the algebras for different values of~$n$. An immediate consequence of our approach is that the Askey--Wilson algebra defined here surjects onto the algebra generated by the intermediate Casimir elements in the $n$-fold tensor product of the quantum group ${\rm U}_q(\mathfrak{sl}_2)$ or, equivalently, onto the Kauffman bracket skein algebra of the $(n+1)$-punctured sphere. We also obtain a family of central elements of the Askey--Wilson algebras which are shown, as a~direct by-product of our construction, to be sent to $0$ in the realisation in the $n$-fold tensor product of ${\rm U}_q(\mathfrak{sl}_2)$, thereby producing a large number of relations for the algebra generated by the intermediate Casimir elements.}

\Keywords{Askey--Wilson algebra; braid group}

\Classification{16T10; 33D45; 81R12}

\section{Introduction}

The (usual) Askey--Wilson algebra, denoted in this paper $\mathfrak{aw}(3)$, originally appeared in~\cite{Zh} to provide an algebraic
underpinning for the eponym polynomials. Indeed, these polynomials are solutions of a bispectral problem, i.e., they satisfy a recurrence and a difference relation.
By identifying the algebraic relations the recurrence operator and the difference operator obey, the relations of $\mathfrak{aw}(3)$ have been discovered.
Another occurrence of this algebra appears in the Racah problem which consists in studying the different recouplings of three irreducible representations of ${\rm U}_q(\mathfrak{sl}_2)$. This leads to an algebraic interpretation of the $6j$-symbols of ${\rm U}_q(\mathfrak{sl}_2)$, that appears to be intimately linked to the Askey–Wilson polynomials~\cite{Zh2,GZ}.
 This is based on the fact that the intermediate Casimir elements of ${\rm U}_q(\mathfrak{sl}_2)^{\otimes 3}$ verify the Askey–Wilson relations of $\mathfrak{aw}(3)$~\cite{Hua3}.
 This led to a new point of view on the centralisers of ${\rm U}_q(\mathfrak{sl}_2)$ in tensor products of any three possibly different spin representations~\cite{CPVZ, CVZ2}.
The Askey--Wilson algebras have subsequently found applications in different contexts as association schemes~\cite{BI,Leo,Ter}, Leonard pairs~\cite{TV}, Kauffman bracket skein algebras~\cite{CL}
and symmetry of physical models~\cite{GVZ, KKM,Post}. In addition of that, the representation theory has been studied in~\cite{Hua2}. For more details about the Askey--Wilson algebra $\mathfrak{aw}(3)$, we refer to the review~\cite{avatar}.

The incarnation of the Askey--Wilson algebra $\mathfrak{aw}(3)$ related to ${\rm U}_q(\mathfrak{sl}_2)^{\otimes 3}$ offers a natural path to a generalisation by considering ${\rm U}_q(\mathfrak{sl}_2)^{\otimes n}$ instead. This path has been already used in different contexts, for example identifying the intermediate Casimir operators as symmetries of quantum $q$-deformed Calogero--Gaudin superintegrable system~\cite{GIV}.
Similarly, the incarnation related to the
Kauffman bracket skein algebra of the sphere with $4$ punctures naturally suggests to increase the number of punctures to $n+1$. Fortunately, this was proved recently~\cite{CL} that the subalgebra of
${\rm U}_q(\mathfrak{sl}_2)^{\otimes n}$ generated by the intermediate Casimir elements and the Kauffman bracket skein algebra of the sphere with $n+1$ punctures
are isomorphic, so that these two ways of generalisation actually coincide. Following the terminology of~\cite{avatar}, we call the resulting algebra the ``special Askey--Wilson algebra'' and denote it $\saw$.

We emphasize that for $n=3$, the Askey--Wilson algebra $\mathfrak{aw}(3)$ and the special Askey--Wilson algebra $\mathfrak{saw}(3)$ are different.
The algebra $\mathfrak{aw}(3)$ has a simple definition in terms of $q$-commutation relations for 3 generators, resulting in an algebra with a polynomial PBW basis,
while the algebra $\mathfrak{saw}(3)$ is obtained by further quotienting out by a certain central element, which looks slightly complicated.
So this should come as no surprise that the problem of finding an explicit algebraic description of $\saw$, for any $n$, by generators and relations seems quite difficult.
We refer to the appendix in~\cite{CL} for the case $n=4$.

Due to the importance of $\mathfrak{aw}(3)$, different attempts to generalise its definition appeared previously for $n=4$~\cite{GW, PostWalt} or for any $n$~\cite{dBdC,dBdCvdV, DeC} but a complete set of relations had not been provided.
The point of view in the paper is that it would be interesting to have an Askey--Wilson algebra $\aw$,
which would be a genuine generalisation of $\mathfrak{aw}(3)$, and which would have the special algebra $\saw$ as a quotient.
This would provide the complete analogue for any $n$ of the general picture for $n=3$. This would also provide the quantum analogue of the $q=1$ classical case, where we have the higher-rank Racah algebras defined in terms of simple commutation relations, which admit as a (rather complicated) quotient the special Racah algebras describing the diagonal centraliser in ${\rm U}(\mathfrak{sl}_2)^{\otimes n}$, see~\cite{CGPV}. To be interesting, the algebra $\aw$ should have a rather simple and natural definition, and should enjoy natural properties. It could be then considered in particular as an intermediary step, interesting in its own as is $\mathfrak{aw}(3)$, towards the description of $\saw$.

In this paper, we provide a definition of the algebra $\aw$ in terms of generators and relations satisfying the above requirements, see Definition~\ref{prop:def-nico2}.
It possesses $\frac{n(n+1)}2$ generators, $n+1$ of them being central.
One property of the algebras $\aw$, which is our first main result, is the existence of coproduct maps relating $\aw$ to $\mathfrak{aw}(n+1)$,
and of a group of automorphisms, for each $\aw$, satisfying the relations of the braid group on $n+1$ strands.
The construction of the automorphisms is done in two steps. First, we construct maps satisfying the relations of the braid group on $n$ strands.
These maps mimic at the level of $\aw$ the natural coproduct maps and conjugation maps by the $R$-matrix in ${\rm U}_q(\mathfrak{sl}_2)^{\otimes n}$. They also have natural interpretations in the Kauffman bracket skein algebra. We indicate that an action of the braid group on $3$ strands by automorphisms of $\mathfrak{aw}(3)$ was obtained in~\cite{Ter}. The connection with the $R$-matrix of ${\rm U}_q(\mathfrak{sl}_2)$ was provided in~\cite{CGVZ}. We also show that, as for $n=3$ in~\cite{Ter}, the realisation of the braid group on $n$ strands as automorphisms actually factors through its quotient by the centre for any $n$. Second, we note that we have a larger group of automorphisms since we supplement the braid group on $n$ strands with another generator realising the braid group on $n+1$ strands. This additional generator has a natural interpretation in the skein algebra (since there are $n+1$ punctures) and seems to be new already for $n=3$.

We find it remarkable that the algebra $\aw$ with its relatively simple presentation is able to retain the properties of having coproduct maps and braid group automorphisms, which may seem to be intrinsic properties of the quotients $\saw$ realised in ${\rm U}_q(\mathfrak{sl}_2)^{\otimes n}$. In fact, our conceptual guiding principle for deciding how many and which relations to put in the definition of $\aw$ was the following: we put the minimal set of relations ensuring the existence of these coproduct maps and automorphisms (and of course such that $\textbf{aw}(3)$ is the known usual Askey--Wilson algebra). This approach via this kind of universal property turns out to be quite fruitful since, first, it results in a natural presentation of $\aw$ and second it allows us to collect easily several interesting consequences, as we discuss now.

First of all, the automorphisms and the coproduct maps allow to obtain directly many relations satisfied in $\aw$ as consequences of the defining relations.
We recover thus many relations calculated for example in~\cite{CL, DeC, PostWalt}. Secondly, this approach allows us to obtain without any calculation that the algebra $\aw$ indeed surjects onto the algebra in ${\rm U}_q(\mathfrak{sl}_2)^{\otimes n}$
generated by the intermediate Casimir elements, or equivalently onto the Kauffman bracket skein algebra of the $(n+1)$-punctured sphere.

Then in a second part of the paper, we turn to more involved consequences of the defining relations of $\aw$, namely, the existence of a large family of central elements.
Here also the coproduct maps and the automorphisms are put to full use. Indeed, we start with the known central element of $\textbf{aw}(3)$ \big(the one which is sent to $0$ in the special quotient realized in ${\rm U}_q(\mathfrak{sl}_2)^{\otimes 3}$\big),
and we obtain a family of central elements in $\aw$ by applying as much as we can the coproduct maps and the automorphisms. We are able to describe a minimal spanning set for this family of central elements,
indexed by subsets of $\{1,\dots,n\}$. On this family of central elements, the action of the $n$-strands braid group automorphisms is shown to be realised simply via the permutation action of the symmetric group on $n$ letters.
Again, as an immediate consequence of the approach advocated here, we have that all these central elements become $0$ in ${\rm U}_q(\mathfrak{sl}_2)^{\otimes n}$.
Therefore, we get many relations satisfied by the quotient $\saw$. However, it remains an open question to decide if putting all these central elements to 0 is enough to get a presentation of $\saw$.
It is actually an open question whether $\saw$ is a quotient of $\aw$ by a central ideal. Comparing with the appendix of~\cite{CL}, we were able to check this latter property only for $n=4$, but we still do not know whether our family of central elements generate the ideal.

Other open questions also remain mainly about the uses of the higher rank Askey--Wilson algebra. For example, a quotient of $\aw$ should describe the centralisers of ${\rm U}_q(\mathfrak{sl}_2)$
in tensor products of any $n$ possibly different spin representations, generalising the results of~\cite{CPVZ, CVZ2} obtained for $n=3$.
Further, the connection with the multivariate Askey--Wilson polynomials~\cite{GI2,G, Ili} should be very fruitful. Indeed, the
study of the representations of the algebra $\aw$ provides the bispectrality operators of these polynomials,
generalising to the $q$-deformed case the known connections between the multivariate Racah polynomials and the higher-rank Racah algebras~\cite{CFR}. Finally, we performed some computer aided proofs, see for instance Proposition~\ref{propcas4}: a conceptual proof of it would be nice to achieve.

{\bf Organisation.} We give the notations and the definition of the algebra $\aw$ in Section~\ref{sec-def} and we prove many consequences of the defining relations.
The description of the group of automorphisms realising the braid group is in Section~\ref{sec-auto} and the technical part of the proof is postponed to Appendix~\ref{sec-appendix}.
Section~\ref{sec-coproduct} deals with the coproduct maps and their relations with the automorphisms of the previous section.
The central elements of $\aw$ and their properties are given in Section~\ref{sec-Cas}. The connections with ${\rm U}_q(\mathfrak{sl}_2)^{\otimes n}$ and with the skein algebra are given in Section~\ref{sec-connections} while
the links with the higher rank Racah algebra are found in Section~\ref{sec-limit}.


\section[The Askey--Wilson algebra aw(n)]{The Askey--Wilson algebra $\boldsymbol{\aw}$}\label{sec-def}

In this section, we give a definition of $\aw$ by generators and relations and draw some consequences of this definition.
The algebra $\aw$ considered here is over $\mathbb{C}(q)$ for an indeterminate~$q$. It is also defined over $\mathbb{C}$ if
we take $q$ a non-zero complex number such that $q^2\neq 1$.

\subsection{Definition}

Let us define some notations and terminology on sets and subsets:
\begin{itemize}\itemsep=0pt
 \item For two subsets $I,J\subseteq\{1,\dots,n\}$, we say that $I<J$ if all elements of $I$ are strictly smaller than all elements of $J$.

\item A non-empty subset $I\subseteq\{1,\dots,n\}$ is connected if it consists in a subset of consecutive integers.

\item Two disjoint connected subsets $I,J\subseteq\{1,\dots,n\}$ are adjacent if their union is connected.

\item A hole $H$ between two disjoint connected subsets $I_1$ and $I_2$ consists in the connected subset between $I_1$ and $I_2$. Visually, we have
\[\ \ \ldots \bullet,\underbrace{\bullet,\dots,\bullet}_{I_1},\underbrace{\bullet,\dots,\bullet}_{H},\underbrace{\bullet,\dots,\bullet}_{I_2},\bullet,\ldots .\]
In this picture, and in all the similar pictures below, the integers (drawn as $\bullet$'s) are ordered either from left to right or from right to left (depending on the respective positions of $I_1$ and $I_2$ in the natural order). So such a picture does not mean that $I_1<I_2$, it means that~$I_1$,~$H$,~$I_2$ are adjacent connected subsets.

\item
A sequence $(I_1,\dots,I_k)$ of non-empty connected subsets of $\{1,\dots,n\}$ is said monotonic if
either $I_1<I_2<\dots<I_k$ or $I_1>I_2>\dots>I_k$.
 \end{itemize}

By direct computation, we can show the following relations, called $q$-Jacobi relations:
\begin{subequations}
\begin{align}
 &\bigl[[A,B]_q,C\bigr]_q-\bigl[A,[B,C]_q\bigr]_q=\frac{1}{\big(q-q^{-1}\big)^2}\bigl[B,[C,A]\bigr] , \label{eq:qjacobi1}\\
 &\bigl[A,[C,B]_q\bigr]_q-\bigl[C,[A,B]_q\bigr]_q=\frac{q+q^{-1}}{q-q^{-1}}\bigl[[A,C],B\bigr]_{q^2} ,\label{eq:qjacobi2}\\
 &\bigl[A,[B,C]_q\bigr]+\bigl[C,[A,B]_q\bigr]+\bigl[B,[C,A]_{q}\bigr]=0 ,\label{eq:qjacobi3}
\end{align}
\end{subequations}
where the $q$-commutator is defined by
\begin{equation} \label{eq:qcom}
 [A,B]_q=\frac{1}{q-q^{-1}}\big(qAB- q^{-1}BA\big)=[B,A]_{q^{-1}} ,
\end{equation}
and $[A,B]=AB-BA$ is the usual commutator.
Of particular interest is the case when~$A$ and~$C$ commute. In this case, the right-hand sides of~\eqref{eq:qjacobi1} and~\eqref{eq:qjacobi2} vanish
and we have
\begin{align*}
[A,C]_q=AC,\qquad [A,BC]_q=[A,B]_qC,\qquad [A,CB]_q=C[A,B]_q
 \end{align*}
that we will use without mentioning.

In the following, we consider the elements $C_I$ indexed by all the connected subsets $I\subseteq\{1,\dots,n\}$.
By convention, we set
\begin{equation}\label{eq:Cvide}
 C_{\varnothing}:=1 .
\end{equation}
A notation for $C_I$ forgetting the accolades for a set will be used: if $I=\{i,i+1,\dots,j\}$, $C_{ii+1\dots j}$ stands for $C_I$.
\begin{exam}
For $n=3$, the elements $C_I$ are $C_1$, $C_2$, $C_3$, $C_{12}$, $C_{23}$, $C_{123}$ and, for $n=4$, they are $C_1$, $C_2$, $C_3$, $C_4$, $C_{12}$, $C_{23}$, $C_{34}$, $C_{123}$, $C_{234}$, $C_{1234}$.
\end{exam}
 If a connected subset $I$ is written as the disjoint union of two connected subsets $I_1$ and $I_2$, then~$C_{I_1I_2}$ and $C_{I_2I_1}$ mean $C_I$.
\begin{exam}
If $I_1=\{1\}$ and $I_2=\{2,3\}$, the notations $C_{I_1I_2}$ and $C_{I_2I_1}$ both mean $C_{123}$.
\end{exam}

We also want to define elements $C_{I_1I_2}$ with a hole between $I_1$ and $I_2$.
Let us consider~$I_1$,~$I_2$ two non-empty and disjoint connected subsets of $\{1,\dots,n\}$ with a non-empty hole $H$ between them. Visually, we have
\[\ \ \ldots \bullet,\underbrace{\bullet,\dots,\bullet}_{I_1},\underbrace{\bullet,\dots,\bullet}_{H},\underbrace{\bullet,\dots,\bullet}_{I_2},\bullet,\ldots, \]
where the integers $1,\dots,n$, (drawn as $\bullet$'s) are ordered either from left to right or from right to left
(depending on the respective positions of $I_1$ and $I_2$ in the natural order). The element $C_{I_1I_2}$ is defined by
\begin{equation}\label{def-CI2}
C_{I_1I_2}:=-[C_{I_1H},C_{HI_2}]_q+C_{I_1}C_{I_2}+C_{H}C_{I_1HI_2}.
\end{equation}
Sometimes a ``,'' in $C_{I_1,I_2}$ is inserted for clarity.
\begin{exam}
As an illustration, we have
\begin{gather*}
 C_{13} :=-[C_{12},C_{23}]_q+C_1C_3+C_2C_{123} ,\qquad
 C_{236} :=-[C_{2345},C_{456}]_q+C_{23}C_6+C_{45}C_{23456} .
\end{gather*}
For $n=4$, the relation~\eqref{def-CI2} defines the elements $C_{13}$, $C_{31}$, $C_{24}$, $C_{42}$, $C_{14}$, $C_{41}$, $C_{12,4}$, $C_{4,12}$, $C_{1,34}$, $C_{34,1}$.
\end{exam}
We are ready to give a definition of the algebra $\aw$ by generators and relations, using the notations introduced above.
\begin{defi}\label{prop:def-nico2}
The algebra $\aw$ is the unital associative algebra generated by the ele\-ments~$C_I$, where $I$ is any non-empty connected subset of $\{1,\dots,n\}$,
satisfying the following relations:
\begin{itemize}\itemsep=0pt
\item for any two connected subsets $I$ and $J$,
\begin{equation}\label{relcommv}
 [C_I,C_J]=0\qquad \text{if $I\cap J=\varnothing$ or $I\subset J$;}
\end{equation}

\item for any monotonic sequence of three adjacent non-empty connected subsets $(I_1,I_2,I_3)$,
\begin{align}
 & C_{I_1I_2}=-[C_{I_2I_3},C_{I_1I_3}]_q+C_{I_1}C_{I_2}+C_{I_3}C_{I_1I_2I_3} , \label{relaw31v}
\end{align}
where $C_{I_1I_3}$ is defined by~\eqref{def-CI2};

\item for any monotonic sequence of four adjacent non-empty connected subsets $(I_1,I_2,I_3,I_4)$,
\begin{align}
 & C_{I_1I_4}=-[C_{I_1I_3},C_{I_3I_4}]_q+C_{I_1}C_{I_4}+C_{I_3}C_{I_1I_3I_4} ,\label{relaw41v}
\end{align}
where $C_{I_1I_3}$, $C_{I_1I_4}$ and $C_{I_1I_3I_4}$ are defined by~\eqref{def-CI2}.
\end{itemize}
\end{defi}

Note that in $\aw$, the element $C_{I_2I_1}$, obtained from~\eqref{def-CI2}, is in general different from~$C_{I_1I_2}$. However, it
is obtained by simply replacing $q$ by $q^{-1}$ in $C_{I_1I_2}$:
\begin{equation*}
C_{I_2I_1}:=-[C_{I_1H},C_{HI_2}]_{q^{-1}}+{C_{I_1}C_{I_2}}+C_{H}C_{I_1HI_2}.
\end{equation*}
We have used that, from our notations, we get $C_{I_1H}=C_{HI_1}$ and other similar equalities implying adjacent subsets.

\begin{lemm} Let $(I_1,H,I_2)$ be any sequence of monotonic adjacent connected subsets. In $\aw$, the elements $C_{I_1I_2}$ and $C_{I_2I_1}$ are connected as follows:
\begin{subequations}\label{eq:C13C31}
\begin{align}
&\frac{q^{-1}}{q+q^{-1}}C_{I_1I_2} +\frac{q}{q+q^{-1}}C_{I_2I_1}= -C_{HI_2}C_{I_1H}+ C_{I_1}C_{I_2}+C_HC_{I_1HI_2},\\
& \frac{q}{q+q^{-1}}C_{I_1I_2}+\frac{q^{-1}}{q+q^{-1}}C_{I_2I_1}= -C_{I_1H}C_{HI_2}+ C_{I_1}C_{I_2}+C_HC_{I_1HI_2}.
\end{align}
\end{subequations}
\end{lemm}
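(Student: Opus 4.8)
The plan is to prove both identities by direct substitution, observing that $C_{I_1I_2}$ and $C_{I_2I_1}$ are built from the same four ingredients and differ only through the $q$-commutator term. First I would abbreviate $A=C_{I_1H}$, $B=C_{HI_2}$ (using $C_{I_1H}=C_{HI_1}$ and the analogous equalities for adjacent subsets) and set $K=C_{I_1}C_{I_2}+C_HC_{I_1HI_2}$ for the common summand. With these notations, \eqref{def-CI2} and the remark immediately preceding the lemma read
\begin{align*}
C_{I_1I_2}&=-[A,B]_q+K=-\frac{1}{q-q^{-1}}\bigl(qAB-q^{-1}BA\bigr)+K,\\
C_{I_2I_1}&=-[A,B]_{q^{-1}}+K=-\frac{1}{q-q^{-1}}\bigl(qBA-q^{-1}AB\bigr)+K,
\end{align*}
where for the second line I invoke the identity $[A,B]_{q^{-1}}=[B,A]_q$ recorded in \eqref{eq:qcom}.

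Next I would form the two linear combinations on the left-hand sides of \eqref{eq:C13C31}. In each, the coefficients $\frac{q^{-1}}{q+q^{-1}}$ and $\frac{q}{q+q^{-1}}$ of $K$ sum to $1$, so the $K$-part is simply $K$ in both cases. It then remains to track the bilinear terms. For the first combination I would compute
\[
q^{-1}\bigl(qAB-q^{-1}BA\bigr)+q\bigl(qBA-q^{-1}AB\bigr)=\bigl(q^2-q^{-2}\bigr)BA,
\]
the $AB$-contributions cancelling, and analogously for the second combination the same manipulation yields $\bigl(q^2-q^{-2}\bigr)AB$.

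Finally, dividing by the prefactor $(q+q^{-1})(q-q^{-1})=q^2-q^{-2}$ and keeping the overall minus sign converts these expressions into $-BA$ and $-AB$, which are precisely $-C_{HI_2}C_{I_1H}$ and $-C_{I_1H}C_{HI_2}$. Adding back $K$ reproduces the two right-hand sides of \eqref{eq:C13C31}. I do not anticipate any genuine obstacle: the argument is purely formal and uses only the definitions of $C_{I_1I_2}$ and $C_{I_2I_1}$ together with the elementary identity \eqref{eq:qcom}, so that the identities in fact hold already in the free algebra before imposing any of the defining relations of $\aw$. The only point needing care is the bookkeeping of the powers of $q$ when collecting the $AB$ and $BA$ terms.
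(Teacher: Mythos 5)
Your proof is correct and is exactly the paper's argument: the paper's proof consists of the single sentence ``Replace $C_{I_1I_2}$ and $C_{I_2I_1}$ by their definition to get the results,'' and your computation carries out precisely that substitution, with the bookkeeping of the $q$-powers done correctly. Your observation that the identities already hold in the free algebra is also accurate, since only the definition \eqref{def-CI2} and the identity $[A,B]_{q^{-1}}=[B,A]_q$ are used.
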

\begin{proof} Replace $C_{I_1I_2}$ and $C_{I_2I_1}$ by their definition to get the results. \end{proof}

\begin{exam}\label{ex:aw3}
For $n=3$, the Askey--Wilson algebra $\mathfrak{aw}(3)$ is generated by $C_1$, $C_2$, $C_3$, $C_{12}$, $C_{23}$ and $C_{123}$.
Relation~\eqref{relcommv} proves that $C_1$, $C_2$, $C_3$ and $C_{123}$ are central. The defining relations~\eqref{def-CI2} lead to
\begin{align}\label{eq:aw31}
&C_{13} := -[C_{12},C_{23}]_q+ C_{1}C_{3}+C_{2}C_{123} .
\end{align}
The relations~\eqref{relaw41v} do not exist for $n=3$. The ones given by~\eqref{relaw31v} with the subsets~$(1,2,3)$ or~$(3,2,1)$ read
\begin{align}
&C_{12} = -[C_{23},C_{13}]_q+C_{1}C_{2}+C_{3}C_{123} ,\label{eq:aw32}
\\
&C_{23} = -[C_{12},C_{31}]_q+ C_{2}C_{3}+C_{1}C_{123} .\label{eq:aw33}
\end{align}
Using the definition~\eqref{def-CI2} of $C_{31}$ and the $q$-Jacobi relation~\eqref{eq:qjacobi1}, one proves that~\eqref{eq:aw33} can be replaced by
\begin{equation}
 C_{23} = -[C_{13},C_{12}]_q+ C_{2}C_{3}+C_{1}C_{123} .\label{eq:aw33b}
\end{equation}
Relations~\eqref{eq:aw31},~\eqref{eq:aw32} and~\eqref{eq:aw33b} are the usual defining relations of the Askey--Wilson algebra~$\mathfrak{aw}(3)$~\cite{Zh} (see also~\cite{avatar}).
Let us remark that there may be a change of normalisation for the generators used here and the ones used in the previous literature.
For example, there is the change
\[
C_I \to \frac{C_I}{q+q^{-1}},
\]
to compare with~\cite{avatar, DeC} or $C_I \to -C_I$ to compare with~\cite{PostWalt}.
The normalisation of this paper is chosen so that we have~\eqref{eq:Cvide}: $C_{\varnothing}=1$. Let us also point out that there is an unusual denominator in the definition~\eqref{eq:qcom} of the $q$-commutator.
\end{exam}

\subsection{Properties}

It is easily shown from~\eqref{relcommv} that
\begin{equation*}
 C_i, \ i=1,2,\dots,n, \quad C_{12\dots n} \text{ are central in } \aw.
\end{equation*}

There are also other relations implied by Definition~\ref{prop:def-nico2}.
\begin{lemm}\label{prop:def-nico}
In the algebra $\aw$, the following relations are also satisfied:
\begin{itemize}\itemsep=0pt
\item for any monotonic sequence of four adjacent non-empty connected subsets $(I_1,I_2,I_3,I_4)$, the following commutations hold:
\begin{subequations}\label{rel:com}
\begin{alignat}{3}
 & [C_{I_1I_2}, C_{I_1I_2I_4} ]=0 ,\qquad&& [C_{I_1I_2}, C_{I_4I_2I_1} ]=0 ,&\label{rel:com1}\\
 & [C_{I_3I_4}, C_{I_1I_3I_4} ]=0 ,\qquad&& [C_{I_3I_4}, C_{I_4I_3I_1} ]=0 ,&\nonumber\\ 
 & [C_{I_1I_2I_3}, C_{I_1I_3}]=0 ,\qquad&& [C_{I_1I_2I_3}, C_{I_3I_1}]=0 ,&\nonumber\\ 
 & [C_{I_2I_3I_4}, C_{I_2I_4}]=0 ,\qquad&& [C_{I_2I_3I_4}, C_{I_4I_2}]=0 ,&\nonumber\\ 
 & [C_{I_2I_3}, C_{I_1I_4} ]=0 ,\qquad&& [C_{I_2I_3}, C_{I_4I_1} ]=0 ;&\nonumber 
\end{alignat}
\end{subequations}

\item for any monotonic sequence of three adjacent non-empty connected subsets $(I_1,I_2,I_3)$,
\begin{subequations}\label{relaw3}
\begin{align}
 & C_{I_1I_2}=-[C_{I_2I_3},C_{I_1I_3}]_q+C_{I_1}C_{I_2}+C_{I_3}C_{I_1I_2I_3} , \label{relaw31}\\
 & C_{I_2I_3}=-[C_{I_1I_3},C_{I_1I_2}]_q+C_{I_2}C_{I_3}+C_{I_1}C_{I_1I_2I_3}, \label{relaw32}\\
&C_{I_1I_3}=-[C_{I_1I_2},C_{I_2I_3}]_q+C_{I_1}C_{I_3}+C_{I_2}C_{I_1I_2I_3}\label{relaw33};
\end{align}
\end{subequations}

\item for any monotonic sequence of four adjacent non-empty connected subsets $(I_1,I_2,I_3,I_4)$,%
\begin{subequations}\label{relaw4a}
\begin{align}
 & C_{I_1I_4}=-[C_{I_1I_2},C_{I_2I_4}]_q+ C_{I_1}C_{I_4}+C_{I_2}C_{I_1I_2I_4} ,\label{relaw43}\\
 & C_{I_2I_4}=-[C_{I_1I_4},C_{I_1I_2}]_q+ C_{I_2}C_{I_4}+C_{I_1}C_{I_1I_2I_4} ,\label{relaw46}\\
&C_{I_1I_2}=-[C_{I_2I_4},C_{I_1I_4}]_q+C_{I_1}C_{I_2}+C_{I_4}C_{I_1I_2I_4},\label{eq:2h3}
\end{align}\vspace{-1.1cm}
\end{subequations}
\begin{subequations}\label{relaw4b}
\begin{align}
 & C_{I_1I_4}=-[C_{I_1I_3},C_{I_3I_4}]_q+C_{I_1}C_{I_4}+C_{I_3}C_{I_1I_3I_4} ,\label{relaw41}\\
 & C_{I_1I_3}=-[C_{I_3I_4},C_{I_1I_4}]_q+ C_{I_1}C_{I_3}+C_{I_4}C_{I_1I_3I_4} ,\label{relaw45}\\
&C_{I_3I_4}=-[C_{I_1I_4},C_{I_1I_3}]_q+C_{I_3}C_{I_4}+C_{I_1}C_{I_1I_3I_4},\label{eq:2h2}
\end{align}\vspace{-1.1cm}
\end{subequations}
\begin{subequations}\label{relaw4c}
\begin{align}
 & C_{I_1I_2I_4}=-[C_{I_3I_1},C_{I_2I_3I_4}]_q+C_{I_1}C_{I_2I_4}+C_{I_3}C_{I_1I_2I_3I_4} ,\label{relaw47}\\
 &C_{I_3I_1}=-[C_{I_2I_3I_4},C_{I_1I_2I_4}]_q+C_{I_3}C_{I_1}+C_{I_2I_4}C_{I_1I_2I_3I_4} ,\label{relaw49}\\
&C_{I_2I_3I_4}=-[C_{I_1I_2I_4}, C_{I_3I_1}]_q +C_{I_3}C_{I_2I_4}+C_{I_1}C_{I_1I_2I_3I_4},\label{eq:2h4}
\end{align}\vspace{-1.1cm}
\end{subequations}
\begin{subequations}\label{relaw4d}
\begin{align}
 & C_{I_1I_3I_4}=-[C_{I_1I_2I_3},C_{I_4I_2}]_q+C_{I_4}C_{I_1I_3}+C_{I_2}C_{I_1I_2I_3I_4} , \label{relaw48} \\
 &C_{I_4I_2}=-[C_{I_1I_3I_4},C_{I_1I_2I_3}]_q+C_{I_2}C_{I_4}+C_{I_1I_3}C_{I_1I_2I_3I_4} ,\label{relaw410}\\
&C_{I_1I_2I_3}=-[C_{I_4I_2}, C_{I_1I_3I_4}]_q +C_{I_2}C_{I_1I_3}+C_{I_4}C_{I_1I_2I_3I_4},\label{eq:2h5}
\end{align}\vspace{-1.1cm}
\end{subequations}
\begin{subequations}\label{relaw4e}
\begin{align}
 & C_{I_1I_2I_4}=-[C_{I_2I_3},C_{I_1I_3I_4}]_q +C_{I_2}C_{I_1I_4}+C_{I_3}C_{I_1I_2I_3I_4} , \label{relaw42}\\
 & C_{I_1I_3I_4}=-[C_{I_1I_2I_4},C_{I_2I_3}]_q+ C_{I_1I_4}C_{I_3}+C_{I_2}C_{I_1I_2I_3I_4} ,\label{relaw44}\\
&C_{I_2I_3}=-[C_{I_1I_3I_4},C_{I_1I_2I_4}]_q+C_{I_2}C_{I_3}+C_{I_1I_4}C_{I_1I_2I_3I_4}. \label{eq:2h1}
\end{align}
\end{subequations}
\end{itemize}
\end{lemm}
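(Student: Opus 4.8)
The plan is to derive every relation from the three defining ingredients — the commutations \eqref{relcommv}, the triple relation \eqref{relaw31v}, the quadruple relation \eqref{relaw41v} — together with the definition \eqref{def-CI2} of the holed generators and the $q$-Jacobi identities. Two elementary engines do all the work. The first is a \emph{constituent test}: to show that a generator $C_X$ commutes with a holed generator $C_Y$, expand $C_Y$ through \eqref{def-CI2} and check, using only \eqref{relcommv}, that $C_X$ commutes with each connected-subset generator occurring in the expansion; it then commutes with each $q$-commutator appearing, since it commutes with both arguments, and hence with $C_Y$. The second is the \emph{repeated-argument Jacobi identity}: putting $C=A$ in \eqref{eq:qjacobi1} kills the right-hand side because $[A,A]=0$, giving
\[ [[A,B]_q,A]_q=[A,[B,A]_q]_q . \]
Together with the product rules following \eqref{eq:qcom} (valid whenever $A$ commutes with the factor being moved), this is the device that converts a reversed-order $q$-commutator into the stated order.

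\textbf{Commutations and triple relations.} First I would prove \eqref{rel:com} by the constituent test. For example $C_{I_2I_3}$ commutes with $C_{I_1I_4}$ because, expanding $C_{I_1I_4}$ via \eqref{def-CI2} with hole $I_2I_3$, the block $I_2I_3$ is contained in $I_1I_2I_3$, in $I_2I_3I_4$ and in $I_1I_2I_3I_4$, and is disjoint from $I_1$ and from $I_4$, so \eqref{relcommv} applies term by term; the other commutations, and their $q\to q^{-1}$ mirror images, are handled identically by tracking containment and disjointness of the blocks. Next, in \eqref{relaw3}, relation \eqref{relaw31} is \eqref{relaw31v} and \eqref{relaw33} is the definition \eqref{def-CI2} of $C_{I_1I_3}$. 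To get \eqref{relaw32} I apply \eqref{relaw31v} to the reversed triple $(I_3,I_2,I_1)$, obtaining $C_{I_2I_3}=-[C_{I_1I_2},C_{I_3I_1}]_q+C_{I_2}C_{I_3}+C_{I_1}C_{I_1I_2I_3}$; comparing with the target reduces the claim to the single equality $[C_{I_1I_2},C_{I_3I_1}]_q=[C_{I_1I_3},C_{I_1I_2}]_q$. Writing $A=C_{I_1I_2}$, $B=C_{I_2I_3}$ and substituting $C_{I_3I_1}=-[B,A]_q+P$, $C_{I_1I_3}=-[A,B]_q+P$ with $P=C_{I_1}C_{I_3}+C_{I_2}C_{I_1I_2I_3}$, the commuting factor $P$ passes through (each of $C_{I_1},C_{I_2},C_{I_3},C_{I_1I_2I_3}$ commutes with $A$) and the identity collapses to the repeated-argument identity above.

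\textbf{Quadruple relations.} The fifteen relations \eqref{relaw4a}--\eqref{relaw4e} are treated group by group. Each group is an Askey--Wilson cyclic triple in three elements $X,Y,Z$ whose ``Casimir'' part is built from generators that, by the commutations just established and \eqref{relcommv}, commute with $X,Y,Z$. For each relation I identify a seed — an instance of \eqref{relaw31v} or \eqref{relaw41v} applied to a monotonic adjacent sequence obtained from $(I_1,I_2,I_3,I_4)$ by reversing and/or merging adjacent blocks, or the definition \eqref{def-CI2}, or a now-proven triple relation from \eqref{relaw3} applied to a coarsened partition — then expand the holed generators through \eqref{def-CI2}, move the commuting Casimir factors past the $q$-commutators by the product rules, and apply the repeated-argument identity to reconcile the commutator orders exactly as for \eqref{relaw32}. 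The just-proven Lemma \eqref{eq:C13C31} is also available to exchange reversed holed generators $C_{I_1I_2}\leftrightarrow C_{I_2I_1}$ when a seed produces the opposite ordering.

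\textbf{Main obstacle.} The difficulty is organisational rather than conceptual, and it concentrates in two places. The commutativity of a holed generator with another generator is \emph{not} furnished by \eqref{relcommv} and must be re-established from \eqref{def-CI2} in each instance, so the block containment/disjointness bookkeeping has to be carried out carefully and correctly across all cases. More delicate still, the defining relations have very many valid consequences, and for each of the fifteen quadruple identities one must select precisely the seed sequence whose $q$-Jacobi reconciliation lands on the stated right-hand side rather than on a merely equivalent one. Managing this case analysis — and its reversed mirror images — is where the genuine labour lies; the underlying algebraic mechanism is in every case the same one-line repeated-argument Jacobi reduction.
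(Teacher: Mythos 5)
Your proposal is correct and follows essentially the same route as the paper's proof: the commutations \eqref{rel:com} by expanding the holed generator via \eqref{def-CI2} and invoking \eqref{relcommv} termwise, relation \eqref{relaw32} by applying \eqref{relaw31v} to the reversed triple and reconciling with the $q$-Jacobi identity, and the fifteen quadruple relations by selecting a seed among \eqref{relaw31v}, \eqref{relaw41v} and \eqref{def-CI2} and reducing with $q$-Jacobi and the already-established commutations. The only caveat is that the reconciliation step is not always the repeated-argument case $C=A$ of \eqref{eq:qjacobi1}: for instance \eqref{relaw43} needs the general vanishing form $\bigl[A,[B,C]_q\bigr]_q=\bigl[[A,B]_q,C\bigr]_q$ with $A=C_{I_1I_2}$ and $C=C_{I_3I_4}$ distinct but commuting, which is nonetheless within the toolkit you list.
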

\begin{proof} To prove the relations in~\eqref{rel:com}, replace the elements with one hole using~\eqref{def-CI2} and remark that all the generators appearing commute with the term without any hole using~\eqref{relcommv}.

Relation~\eqref{relaw31} is the relation~\eqref{relaw31v} in the Definition~\ref{prop:def-nico2}.
To prove relation~\eqref{relaw32}, we follow the same steps as in Example~\ref{ex:aw3}. Relation~\eqref{relaw33} is just~\eqref{def-CI2}.

Relation~\eqref{relaw41} is just a copy of~\eqref{relaw41v}.
To prove~\eqref{relaw43}, we evaluate the $q$-commutator by replacing $C_{I_2I_4}$ by its defining formula~\eqref{def-CI2}. Then we use the $q$-Jacobi relations~\eqref{eq:qjacobi1} and the relation~\eqref{relcommv}.

The other relations are proven similarly:~\eqref{relaw45} is obtained expressing $C_{I_1I_4}$ by~\eqref{relaw43} in the $q$-commutator $[C_{I_3I_4},C_{I_1I_4}]_q$, while
\eqref{relaw46} is deduced from~\eqref{relaw41v} and~\eqref{relaw47} is deduced from~\eqref{relaw46} (with $(I_1,I_2,I_3,I_4)\! \to \! (I_4,I_3,I_2,I_1)$), and~\eqref{relaw48} from~\eqref{relaw45} (with $(I_1,I_2,I_3,I_4)\!\to (I_4,I_3,I_2,I_1)$).
Relations~\eqref{relaw49},~\eqref{relaw410},~\eqref{relaw42} and~\eqref{relaw44} are deduced from~\eqref{def-CI2}.

To prove~\eqref{eq:2h1} (resp.~\eqref{eq:2h2},~\eqref{eq:2h3},~\eqref{eq:2h4},~\eqref{eq:2h5}), we use the definition~\eqref{def-CI2} of $C_{I_1I_3I_4}$ (resp. $C_{I_1I_3}$, $C_{I_2I_4}$, $C_{I_3I_1}$, $C_{I_4I_2}$),
the $q$-Jacobi relation and the relations just proven.
\end{proof}

\begin{rema}
In Lemma~\ref{prop:def-nico}, the relations are organized in clusters of three. These clusters correspond to three equations defining a $\mathfrak{aw}(3)$ algebra.
\end{rema}

\begin{lemm}\label{lem:relc1}
 In $\aw$, the following relations between commutators hold, for any monotonic sequence of four adjacent non-empty connected subsets $(I_1,I_2,I_3,I_4)$,
 \begin{align}
& [C_{I_1I_2},C_{I_2I_3} ]= [C_{I_1I_2I_4},C_{I_2I_3I_4} ]+ [C_{I_3I_4},C_{I_1I_4} ] ,
\label{eq:comut1}\\
& [C_{I_2I_3},C_{I_3I_4} ]= [C_{I_1I_2I_3},C_{I_1I_3I_4} ]+ [C_{I_1I_4},C_{I_1I_2} ] ,
\label{eq:comut2}\\
& [C_{I_1I_2} ,C_{I_2I_3I_4}]= [C_{I_1I_2I_4},C_{I_2I_3} ]+ [C_{I_1I_2I_3} ,C_{I_2I_4}] ,
\label{eq:comut3}\\
& [C_{I_3I_4} ,C_{I_1I_2I_3}]= [C_{I_1I_3I_4},C_{I_2I_3} ]+ [C_{I_2I_3I_4} ,C_{I_1I_3}] ,
\label{eq:comut4}\\
& [C_{I_1I_2I_3} ,C_{I_2I_3I_4}]= [C_{I_1I_2},C_{I_2I_4} ]+ [C_{I_3I_1} ,C_{I_3I_4}] ,
\label{eq:comut5}
 \end{align}
 and
 \begin{align}
& [C_{I_1I_3},C_{I_1I_2I_4} ]= [C_{I_1I_3I_4},C_{I_1I_2} ]+ [C_{I_1I_2I_3},C_{I_1I_4} ] ,
\label{eq:comut6}\\
& [C_{I_2I_4},C_{I_1I_3I_4} ]= [C_{I_1I_2I_4},C_{I_3I_4} ]+ [C_{I_2I_3I_4},C_{I_1I_4} ] ,
\label{eq:comut7}\\
& [C_{I_1I_4},C_{I_3I_1} ]= [C_{I_2I_3},C_{I_2I_4} ]+ [C_{I_1I_2I_4},C_{I_1I_2I_3} ] .
\label{eq:comut8}
 \end{align}
\end{lemm}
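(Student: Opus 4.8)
The plan is to prove all of \eqref{eq:comut1}--\eqref{eq:comut8} by reducing each ordinary commutator to $q$-commutators of \emph{connected} generators, expanding every hole-element through \eqref{def-CI2}, and then reorganising with the $q$-Jacobi relations \eqref{eq:qjacobi1}--\eqref{eq:qjacobi3} and the relations already established in Lemma~\ref{prop:def-nico}. Two elementary identities drive the bookkeeping. First, from \eqref{eq:qcom} one has $[A,B]=\frac{q-q^{-1}}{q+q^{-1}}([A,B]_q-[B,A]_q)$, so an ordinary commutator is the antisymmetric part of the $q$-commutator. Second, for any genuine monotonic triple $(P,H,Q)$ of adjacent subsets, comparing \eqref{def-CI2} for $C_{PQ}$ with its $q\to q^{-1}$ reverse $C_{QP}$ gives $[C_{PH},C_{HQ}]=-\frac{q-q^{-1}}{q+q^{-1}}(C_{PQ}-C_{QP})$. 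Thus a commutator of two adjacent generators sharing a middle block is, up to the scalar $-\frac{q-q^{-1}}{q+q^{-1}}$, a reversal difference $C_{PQ}-C_{QP}$, and this is the normal form into which I would try to push both sides.

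Concretely I would treat one representative per cluster, say \eqref{eq:comut1}. Its left-hand side is immediately $-\frac{q-q^{-1}}{q+q^{-1}}(C_{I_1I_3}-C_{I_3I_1})$ by the second identity applied to $(I_1,I_2,I_3)$. For the right-hand side I would expand the double-hole element $C_{I_1I_4}$ (hole $I_2I_3$) and the single-hole element $C_{I_1I_2I_4}$ (hole $I_3$, viewing $I_1I_2$ as one block) via \eqref{def-CI2}, and then commute the resulting pieces against $C_{I_2I_3I_4}$ and $C_{I_3I_4}$. Here the commutations of \eqref{relcommv} and, crucially, of \eqref{rel:com} are used repeatedly: they let one pull out the factors $C_{I_1},C_{I_4},C_{I_3},C_{I_2I_3},C_{I_1I_2I_3I_4}$ (using e.g. $I_3I_4,I_2I_3I_4\subset I_1I_2I_3I_4$), and they guarantee that the inhomogeneous term $\frac{1}{(q-q^{-1})^2}[B,[C,A]]$ in \eqref{eq:qjacobi1} vanishes precisely when the outer pair commutes, collapsing the $q$-Jacobi identity to $[[A,B]_q,C]_q=[A,[B,C]_q]_q$. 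The surviving nested $q$-commutators are then recollapsed into single elements using \eqref{relaw3}--\eqref{relaw4e}, after which both sides should coincide.

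To avoid repeating this eight times I would exploit the symmetries of the statement. The relabelling $j\mapsto n+1-j$ together with $q\mapsto q^{-1}$ defines a ($\mathbb{C}(q)$-semilinear) automorphism of $\aw$: it reverses the monotonic block sequence $(I_1,I_2,I_3,I_4)$ and, because $[\,\cdot,\cdot\,]_{q^{-1}}=[\,\cdot,\cdot\,]_q$ with the arguments swapped, it sends each $C_{PQ}$ to a forward-oriented element in the reversed labels while preserving all defining relations. Being an automorphism it maps commutator identities to commutator identities; a direct check shows it carries \eqref{eq:comut1} exactly onto \eqref{eq:comut2} and likewise \eqref{eq:comut3} onto \eqref{eq:comut4}, and it relates the remaining identities to reversed-hole variants that are reconciled using the reversal relations \eqref{eq:C13C31}. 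This reduces the genuinely independent computations to a handful, essentially \eqref{eq:comut1}, \eqref{eq:comut3}, \eqref{eq:comut5} and the block \eqref{eq:comut6}--\eqref{eq:comut8}.

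The main obstacle is not a single hard step but the combinatorial bookkeeping, and it concentrates in two places. First, the double-hole element $C_{I_1I_4}$ admits several inequivalent expansions via \eqref{def-CI2} — through the hole $I_2I_3$, or after merging $I_1I_2$ or $I_3I_4$ into a single block — and these are interchanged by the already-proven relations; the delicate point is to choose compatible expansions on the two sides so that all central contributions cancel and only the intended commutators survive. Second, \eqref{relcommv} only records commutations among connected generators, so every commutation involving a hole-element must be justified from \eqref{rel:com}; one must verify that no vanishing commutator outside that list is silently assumed, since it is exactly these commutations that license pulling out factors and discarding the Jacobi remainder. Once the pattern of expansions is fixed for \eqref{eq:comut1}, the analogous choices for \eqref{eq:comut3}, \eqref{eq:comut5} and \eqref{eq:comut6}--\eqref{eq:comut8} should go through the same way, and the symmetry argument then supplies the rest.
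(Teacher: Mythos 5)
Your proposal is correct and follows essentially the same route as the paper: both split each ordinary commutator via $[A,B]=\frac{q-q^{-1}}{q+q^{-1}}\bigl([A,B]_q-[B,A]_q\bigr)$, reduce the left-hand sides to reversal differences $C_{PQ}-C_{QP}$, and evaluate the right-hand sides by substituting~\eqref{def-CI2} and the relations of Lemma~\ref{prop:def-nico} together with the $q$-Jacobi identities. The only addition is your order-reversal-plus-$q\mapsto q^{-1}$ symmetry to cut down the case analysis, which is a valid (and slightly tidier) substitute for the paper's ``the other relations are proven similarly''.
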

\begin{proof}
We first prove relation~\eqref{eq:comut1}. Relations~\eqref{eq:comut2}--\eqref{eq:comut5} follow the same steps.
We use the relation
\begin{equation}\label{eq:com-qcom}
[A,B] = \frac{q-q^{-1}}{q+q^{-1}} \big( [A , B]_q -[B , A]_q\big) ,
\end{equation}
to split each commutator in~\eqref{eq:comut1}, and simplify the global factor $\frac{q-q^{-1}}{q+q^{-1}}$.
Then, the left-hand side of~\eqref{eq:comut1} leads to $-C_{I_1I_3}+C_{I_3I_1}$. For the right-hand side, using~\eqref{relaw49} and~\eqref{relaw45}, we get
\begin{gather*}
{\rm r.h.s.}= [C_{I_1I_2I_4},C_{I_2I_3I_4} ]_q +C_{I_3I_1} -C_{I_1}C_{I_3}- C_{I_2I_4}C_{I_1I_2I_3I_4}
 -C_{I_1I_3}+C_{I_1}C_{I_3} \\
\hphantom{{\rm r.h.s.}=}{}+C_{I_4}C_{I_1I_3I_4}-[C_{I_1I_4} ,C_{I_3I_4}]_q.
\end{gather*}
Now using~\eqref{def-CI2} for $C_{I_1I_2I_4}$, the $q$-Jacobi identity, one obtains
 \begin{gather*}
{\rm r.h.s.}= [-[C_{I_1I_2I_3},C_{I_2I_3I_4} ]_q,C_{I_3I_4} ]_q +C_{I_4}[C_{I_1I_2},C_{I_2I_3I_4}]_q+C_{I_3}C_{I_1I_2I_3I_4}C_{I_2I_3I_4}+C_{I_3I_1}\\
\hphantom{{\rm r.h.s.}=}{} - C_{I_2I_4}C_{I_1I_2I_3I_4}
 -C_{I_1I_3}+C_{I_4}C_{I_1I_3I_4}
-[C_{I_1I_4} ,C_{I_3I_4}]_q.
 \end{gather*}
Finally, using~\eqref{def-CI2} again for $[C_{I_1I_2I_3},C_{I_2I_3I_4} ]_q$ and $[C_{I_1I_2},C_{I_2I_3I_4}]_q$, we obtain ${\rm r.h.s.}=C_{I_3I_1} -C_{I_1I_3}$, which proves~\eqref{eq:comut1}.

Using~\eqref{eq:com-qcom} and~\eqref{relaw31v}, relation~\eqref{eq:comut6} is equivalent to
\begin{gather*}
[C_{I_1I_3}, C_{I_1I_2I_4} ]_q - [C_{I_1I_2I_4},C_{I_1I_3} ]_q\\
\qquad= C_{I_2}C_{I_3I_4} - C_{I_4}C_{I_2I_3}
-[C_{I_1I_2},C_{I_1I_3I_4}]_q+[C_{I_1I_2I_3},C_{I_1I_4}]_q.
 \end{gather*}
Then, we replace the first $C_{I_1I_2I_4}$ and the second $C_{I_1I_3}$ by their definition~\eqref{def-CI2},
$C_{I_1I_3I_4}$ by~\eqref{relaw44} and $C_{I_1I_4}$ by~\eqref{relaw41v}. After some manipulations, we get
\begin{gather*}
C_{I_3}( [C_{I_1I_2},C_{I_1I_4} ]_q -[C_{I_1I_2I_3},C_{I_1I_3I_4}]_q
+ C_{I_1I_3}C_{I_1I_2I_3I_4} - C_{I_1}C_{I_1I_2I_4})=0.
 \end{gather*}
This last relation is proven replacing $C_{I_1I_4}$ and $C_{I_1I_3I_4}$ by their definition~\eqref{def-CI2} that proves~\eqref{eq:comut6}.
Relations~\eqref{eq:comut7} and \eqref{eq:comut8} are proven similarly.
\end{proof}

In the following lemma, non-trivial commuting relations between elements of $\aw$ are provided.
\begin{lemm}\label{lemm:relcom2}
In the algebra $\aw$, the following relations are also satisfied for any monotonic sequence of four adjacent non-empty connected subsets $(I_1,I_2,I_3,I_4)$,
\begin{subequations}
\begin{alignat}{3}
 & [C_{I_1I_3}, C_{I_4I_2} ]=0 ,&&& \label{rel:coma1}\\
 & [C_{I_1I_3I_4}, C_{I_1I_3}]=0 , \qquad&& [C_{I_1I_2I_4}, C_{I_2I_4}]=0 , &\label{rel:coma3}\\
 & [C_{I_1I_3I_4}, C_{I_1I_4}]=0 , \qquad&& [C_{I_1I_2I_4}, C_{I_1I_4}]=0.&\label{rel:coma4}
\end{alignat}
\end{subequations}
\end{lemm}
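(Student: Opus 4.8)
The plan is to isolate~\eqref{rel:coma1} as the one relation requiring a genuine computation and to deduce~\eqref{rel:coma3} and~\eqref{rel:coma4} from it by purely formal commutation arguments, using the relations gathered in Lemmas~\ref{prop:def-nico} and~\ref{lem:relc1}. The basic observation, used throughout, is that inside the block $(I_1,I_2,I_3,I_4)$ the elements $C_{I_1},C_{I_2},C_{I_3},C_{I_4}$ and $C_{I_1I_2I_3I_4}$ commute, by~\eqref{relcommv}, with every generator $C_J$ indexed by a connected $J\subseteq I_1I_2I_3I_4$; expanding the relevant hole-elements by~\eqref{def-CI2} and checking term by term, they therefore commute with each of the hole-elements $C_{I_1I_3}$, $C_{I_4I_2}$, $C_{I_2I_4}$, $C_{I_1I_3I_4}$, $C_{I_1I_2I_4}$ appearing below. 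This lets me discard all such ``central'' summands when computing the brackets.

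For~\eqref{rel:coma1}, I would expand $C_{I_4I_2}$ by~\eqref{def-CI2} and drop the summand $C_{I_4}C_{I_2}$ and the central factors, which leaves
\[
[C_{I_1I_3},C_{I_4I_2}]=-\bigl[C_{I_1I_3},[C_{I_3I_4},C_{I_2I_3}]_q\bigr]+C_{I_3}\bigl[C_{I_1I_3},C_{I_2I_3I_4}\bigr].
\]
Expanding the inner $q$-commutator with the Leibniz rule shows that the first bracket equals $[\alpha,C_{I_2I_3}]_q+[C_{I_3I_4},\beta]_q$ with $\alpha=[C_{I_1I_3},C_{I_3I_4}]$ and $\beta=[C_{I_1I_3},C_{I_2I_3}]$, while the last commutator is rewritten through~\eqref{eq:comut4}. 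In general none of $\alpha$, $\beta$, $[C_{I_1I_3},C_{I_2I_3I_4}]$ vanishes by itself: in the realisation in ${\rm U}_q(\mathfrak{sl}_2)^{\otimes n}$ the elements $C_{I_1I_3}$ and $C_{I_4I_2}$ are the Casimirs of the \emph{disjoint} sets $I_1\cup I_3$ and $I_2\cup I_4$, so their bracket must vanish, but only after a cancellation among these three pieces. The actual proof expresses $\alpha$ and $\beta$ via the $\mathfrak{aw}(3)$ relations~\eqref{relaw3}, substitutes~\eqref{def-CI2} and the commutator identities~\eqref{eq:comut1}--\eqref{eq:comut8}, and verifies the cancellation by repeated use of the $q$-Jacobi relations~\eqref{eq:qjacobi1}--\eqref{eq:qjacobi3}. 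This bookkeeping is the main obstacle, and the only step that is not a formal commutation argument.

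Granting~\eqref{rel:coma1}, the first identities of~\eqref{rel:coma3} and~\eqref{rel:coma4} follow at once. For $[C_{I_1I_3I_4},C_{I_1I_3}]=0$ I substitute~\eqref{relaw48}, $C_{I_1I_3I_4}=-[C_{I_1I_2I_3},C_{I_4I_2}]_q+C_{I_4}C_{I_1I_3}+C_{I_2}C_{I_1I_2I_3I_4}$; the last two summands drop out by the reduction, and $C_{I_1I_3}$ commutes with $C_{I_1I_2I_3}$ by~\eqref{rel:com} and with $C_{I_4I_2}$ by~\eqref{rel:coma1}, hence with their $q$-commutator. For $[C_{I_1I_3I_4},C_{I_1I_4}]=0$ I substitute~\eqref{relaw41} for $C_{I_1I_4}$; again the central summands vanish, and $C_{I_1I_3I_4}$ commutes with $C_{I_1I_3}$ by the identity just proved and with $C_{I_3I_4}$ by~\eqref{rel:com}, hence with $[C_{I_1I_3},C_{I_3I_4}]_q$.

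The two remaining identities are obtained in exactly the same manner, substituting~\eqref{relaw47} into $[C_{I_1I_2I_4},C_{I_2I_4}]$ (using also $[C_{I_2I_4},C_{I_2I_3I_4}]=0$ from~\eqref{rel:com}) and then~\eqref{relaw43} into $[C_{I_1I_2I_4},C_{I_1I_4}]$; the only new input is the companion relation $[C_{I_3I_1},C_{I_2I_4}]=0$ of~\eqref{rel:coma1}. This companion requires no extra work: it is proved by the identical computation with $q$ replaced by $q^{-1}$ or, equivalently, it is the image of~\eqref{rel:coma1} under the bar involution $q\mapsto q^{-1}$, which fixes every generator $C_J$ for connected $J$ and exchanges $C_{I_1I_2}\leftrightarrow C_{I_2I_1}$ on hole-elements (compatibly with~\eqref{def-CI2}, as recorded after Definition~\ref{prop:def-nico2}) and is readily checked to preserve the defining relations of $\aw$, hence to be an automorphism. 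In the same way, all four identities of~\eqref{rel:coma3}--\eqref{rel:coma4} are the images of the two first cases under the relabelling $(I_1,I_2,I_3,I_4)\mapsto(I_4,I_3,I_2,I_1)$ combined with $q\mapsto q^{-1}$.
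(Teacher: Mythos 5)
Your reduction of \eqref{rel:coma3} and \eqref{rel:coma4} to \eqref{rel:coma1} is correct and is exactly the paper's argument: substitute \eqref{relaw48} (resp.\ \eqref{relaw47}, then \eqref{relaw41v} and \eqref{relaw43}), observe that every surviving factor commutes with the relevant hole element by \eqref{rel:com} and by \eqref{rel:coma1} (or its $q\mapsto q^{-1}$ companion), and conclude. The problem is that \eqref{rel:coma1} itself --- the only substantive identity in the lemma --- is not actually proved. You set up a decomposition and then write that ``the actual proof \dots verifies the cancellation by repeated use of the $q$-Jacobi relations'' and that ``this bookkeeping is the main obstacle''; that is an announcement of a computation, not the computation. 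As it stands this is a genuine gap, and your chosen decomposition makes it harder than it needs to be: expanding $[C_{I_1I_3},[C_{I_3I_4},C_{I_2I_3}]_q]$ by the Leibniz rule into $[\alpha,C_{I_2I_3}]_q+[C_{I_3I_4},\beta]_q$ with $\alpha=[C_{I_1I_3},C_{I_3I_4}]$, $\beta=[C_{I_1I_3},C_{I_2I_3}]$ produces \emph{ordinary} commutators that are not directly evaluable from the defining relations (you also drop a global sign: $[A,[B,C]_q]=[[A,B],C]_q+[B,[A,C]]_q$, so the two terms you display should carry a minus).

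The cancellation you defer is in fact a one-line consequence of material you already cite, provided you organise it as the paper does. Apply the cyclic $q$-Jacobi identity \eqref{eq:qjacobi3} to get
\[
[C_{I_1I_3},-[C_{I_3I_4},C_{I_2I_3}]_q]=[C_{I_3I_4},[C_{I_2I_3},C_{I_1I_3}]_q]+[C_{I_2I_3},[C_{I_1I_3},C_{I_3I_4}]_q].
\]
The inner brackets are now $q$-commutators that are known explicitly: $[C_{I_2I_3},C_{I_1I_3}]_q$ from \eqref{relaw31v} and $[C_{I_1I_3},C_{I_3I_4}]_q$ from \eqref{relaw41v}. Substituting, the terms $[C_{I_3I_4},C_{I_1I_2}]$ and $[C_{I_2I_3},C_{I_1I_4}]$ vanish by \eqref{relcommv} and \eqref{rel:com}, and what remains of $[C_{I_1I_3},C_{I_4I_2}]$ is
\[
C_{I_3}\bigl([C_{I_3I_4},C_{I_1I_2I_3}]+[C_{I_2I_3},C_{I_1I_3I_4}]+[C_{I_1I_3},C_{I_2I_3I_4}]\bigr),
\]
which is zero precisely by \eqref{eq:comut4}. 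So the missing step is not a long bookkeeping exercise but a direct appeal to Lemma~\ref{lem:relc1}; you should carry it out rather than assert it.
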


\begin{proof}
To prove~\eqref{rel:coma1}, we use~\eqref{def-CI2} to express $C_{I_4I_2}$ to get
\begin{align*}
[C_{I_1I_3}, C_{I_4I_2} ]&= [C_{I_1I_3}, -[C_{I_3I_4},C_{I_2I_3}]_q +C_{I_3}C_{I_2I_3I_4}]
\\
&= [C_{I_3I_4}, [C_{I_2I_3},C_{I_1I_3}]_q] + [C_{I_2I_3}, [C_{I_1I_3},C_{I_3I_4}]_q] +C_{I_3}[C_{I_1I_3},C_{I_2I_3I_4}],
\end{align*}
where we used the $q$-Jacobi identity~\eqref{eq:qjacobi3} in the second step. Then, the above $q$-commutators are expressed using~\eqref{relaw31v} and~\eqref{relaw41v}.
Finally,~\eqref{eq:comut4} (multiplied by $C_{I_3}$) ends the proof of~\eqref{rel:coma1}.

The expression of $C_{I_1I_3I_4}$ (resp. of $C_{I_1I_2I_4}$) given by~\eqref{relaw48} (resp.\ by~\eqref{relaw47}) now contains only elements commuting with $C_{I_1I_3}$ (resp.\ $C_{I_2I_4}$)
which shows that relations~\eqref{rel:coma3} hold. Relations~\eqref{rel:coma4} are proven similarly using relations~\eqref{relaw43} and~\eqref{relaw41v}.
\end{proof}

A relation $[C_{I_1I_3}, C_{I_4I_2} ]=0$, as proven in the lemma, appeared already in the realisation studied in~\cite{PostWalt}.
Let us also emphasize that $[C_{I_1I_3}, C_{I_2I_4} ], [C_{I_1I_3}, C_{I_3I_1} ]\neq 0$. Nevertheless, these commutators have nice expressions.
\begin{lemm}\label{lem:ff}For any monotonic sequence of four adjacent non-empty connected subsets $(I_1,I_2,\allowbreak I_3,I_4)$, one gets
\begin{gather}
\frac{[C_{I_1I_3}, C_{I_3I_1} ]}{q^2-q^{-2}}= C_{I_2I_3}^2 -C_{I_1I_2}^2 - C_{I_2I_3}(C_{I_2}C_{I_3}+C_{I_1}C_{I_1I_2I_3}) \nonumber\\
\hphantom{\frac{[C_{I_1I_3}, C_{I_3I_1} ]}{q^2-q^{-2}}=}{}+ C_{I_1I_2}(C_{I_1}C_{I_2}+C_{I_3}C_{I_1I_2I_3} ) ,\label{eq:com13} \\
 \frac{[C_{I_1I_3}, C_{I_2I_4} ]}{q^2-q^{-2}}= C_{I_3} C_{I_4}C_{I_{1}I_2}+C_{I_1} C_{I_2}C_{I_3I_4}-C_{I_2} C_{I_3}C_{I_1I_4}-C_{I_1} C_{I_4}C_{I_2I_3} -C_{I_1I_2}C_{I_3I_4}\nonumber\\
\hphantom{\frac{[C_{I_1I_3}, C_{I_2I_4} ]}{q^2-q^{-2}}=}{}+C_{I_2I_3}C_{I_1I_4}.\label{eq:com1324}
 \end{gather}
\end{lemm}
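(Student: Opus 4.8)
The plan is to treat both identities as computations taking place inside a few embedded copies of $\mathfrak{aw}(3)$, by first reducing each left-hand side to a single nested commutator and then expanding it with the $q$-Jacobi identities and the relations already proved in Lemmas~\ref{prop:def-nico},~\ref{lem:relc1} and~\ref{lemm:relcom2}. Throughout, the blocks $C_{I_1},C_{I_2},C_{I_3},C_{I_4}$ and the total element $C_{I_1I_2I_3I_4}$ behave as scalars, since by~\eqref{relcommv} they commute with everything in sight.

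For~\eqref{eq:com13}, set $A=C_{I_1I_2}$ and $B=C_{I_2I_3}$. Comparing the defining relation~\eqref{def-CI2} for $C_{I_1I_3}$ with the one for $C_{I_3I_1}$ (they differ only by $q\leftrightarrow q^{-1}$ in the $q$-commutator), equivalently subtracting the two lines of~\eqref{eq:C13C31}, gives $C_{I_1I_3}-C_{I_3I_1}=\tfrac{q+q^{-1}}{q-q^{-1}}[B,A]$, whence
\[
[C_{I_1I_3},C_{I_3I_1}]=\frac{q+q^{-1}}{q-q^{-1}}\bigl[C_{I_1I_3},[A,B]\bigr].
\]
I then expand the right-hand commutator and move $C_{I_1I_3}$ across $A$ and $B$ using the two remaining $\mathfrak{aw}(3)$ relations~\eqref{relaw31} and~\eqref{relaw32} for the triple $(I_1,I_2,I_3)$, which express $C_{I_1I_3}A$ through $AC_{I_1I_3}$ and $C_{I_1I_3}B$ through $BC_{I_1I_3}$ up to scalar corrections. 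The degree-three terms in $A,B,C_{I_1I_3}$ cancel and the surviving part equals $(q-q^{-1})^2\bigl(-A^2+B^2+A\gamma-B\beta\bigr)$ with $\gamma=C_{I_1}C_{I_2}+C_{I_3}C_{I_1I_2I_3}$ and $\beta=C_{I_2}C_{I_3}+C_{I_1}C_{I_1I_2I_3}$; multiplying by the prefactor and dividing by $q^2-q^{-2}=(q-q^{-1})(q+q^{-1})$ yields exactly~\eqref{eq:com13}.

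For~\eqref{eq:com1324}, I would start from~\eqref{rel:coma1}, i.e.\ $[C_{I_1I_3},C_{I_4I_2}]=0$, together with the analogue of the difference formula above for the triple $(I_2,I_3,I_4)$, namely $C_{I_2I_4}-C_{I_4I_2}=\tfrac{q+q^{-1}}{q-q^{-1}}[C_{I_3I_4},C_{I_2I_3}]$. This collapses the left-hand side to $\tfrac{q+q^{-1}}{q-q^{-1}}\bigl[C_{I_1I_3},[C_{I_3I_4},C_{I_2I_3}]\bigr]$. Expanding by the Jacobi identity as $\bigl[[C_{I_1I_3},C_{I_3I_4}],C_{I_2I_3}\bigr]+\bigl[C_{I_3I_4},[C_{I_1I_3},C_{I_2I_3}]\bigr]$, the commutator $[C_{I_1I_3},C_{I_2I_3}]$ lives in the $(I_1,I_2,I_3)$-subalgebra and is treated as above, whereas $[C_{I_1I_3},C_{I_3I_4}]$ carries the real content. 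Substituting~\eqref{def-CI2}, using the $q$-Jacobi relation~\eqref{eq:qjacobi3}, and rewriting the resulting crossing commutators by means of Lemma~\ref{lem:relc1}, I collect everything and divide by $q^2-q^{-2}$ to obtain the six-term right-hand side of~\eqref{eq:com1324}.

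The hard part is this last computation: since $C_{I_1I_3}$ and $C_{I_3I_4}$ overlap on the block $I_3$, their commutator does not vanish, and the nested expansion produces many cubic terms in the pairwise elements. The key is to arrange these so that the relations of Lemma~\ref{lem:relc1}, which exchange one crossing commutator for two others, force the unwanted higher-degree contributions to cancel, leaving only the stated expression. I expect the bookkeeping, rather than any isolated conceptual step, to be the genuine obstacle here.
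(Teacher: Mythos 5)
Your proposal is correct and follows essentially the same route as the paper: for~\eqref{eq:com13} it reduces the left side to a nested commutator with $[C_{I_1I_2},C_{I_2I_3}]$ and reorders by pushing $C_{I_1I_3}$ through the products via~\eqref{relaw31}--\eqref{relaw32} (your intermediate $(q-q^{-1})^2(-A^2+B^2+A\gamma-B\beta)$ is exactly right), and for~\eqref{eq:com1324} it rests on the same two pillars the paper uses, namely the $q\leftrightarrow q^{-1}$ relation~\eqref{eq:C13C31} between $C_{I_2I_4}$ and $C_{I_4I_2}$ together with $[C_{I_1I_3},C_{I_4I_2}]=0$ from~\eqref{rel:coma1}. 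The only (cosmetic) difference is that the paper keeps the single ordered product $-C_{I_3I_4}C_{I_2I_3}+C_{I_3}C_{I_2I_3I_4}$ from one line of~\eqref{eq:C13C31} and finishes with~\eqref{eq:comut4}, whereas you keep the full commutator $[C_{I_3I_4},C_{I_2I_3}]$ and Jacobi-expand; the bookkeeping you defer does close up, with the residual terms cancelling via~\eqref{relaw42} and the hole definition~\eqref{def-CI2} of $C_{I_1I_2,I_4}$.
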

\begin{proof} To prove~\eqref{eq:com13}, let replace $C_{I_3I_1}$ by its definition and express the ($q$-)commutators to obtain
\begin{align*}
[C_{I_1I_3}, C_{I_3I_1} ]={}&{}-[C_{I_1I_3}, [C_{I_2I_3},C_{I_1I_2}]_q ]\\
={}&{}-\frac{1}{q-q^{-1}}\big(qC_{I_1I_3}C_{I_2I_3}C_{I_1I_2}-q^{-1}C_{I_1I_3}C_{I_1I_2}C_{I_2I_3}-qC_{I_2I_3}C_{I_1I_2}C_{I_1I_3}\\
&{}+q^{-1}C_{I_1I_2}C_{I_2I_3}C_{I_1I_3}\big).
\end{align*}
Then, in the four elements of the above sum, we move $C_{I_1I_3}$ in the middle of the products using~\eqref{relaw31}
or~\eqref{relaw32} to get~\eqref{eq:com13}.

Replace $C_{I_2I_4}$ in the right-hand side of~\eqref{eq:com1324} by using~\eqref{eq:C13C31} to get
\begin{align*}
 {\rm r.h.s.}&=\frac{1}{q^2-q^{-2}} \big[C_{I_1I_3}, -q^2C_{I_4I_2}+\big(q^2+1\big)(-C_{I_3I_4}C_{I_2I_3}+C_{I_2}C_{I_4}+C_{I_3}C_{I_2I_3I_4}) \big]\\
 &=\frac{q}{q-1/q} ( -C_{I_1I_3}C_{I_3I_4}C_{I_2I_3}+C_{I_3I_4}C_{I_2I_3}C_{I_1I_3}+C_{I_3}[C_{I_1I_3},C_{I_2I_3I_4} ] ).
\end{align*}
We have used in the second line the result of Lemma~\ref{lemm:relcom2}. By using the commutation relations to move $C_{I_1I_3}$ in the middle of the two first elements of the sum and
\eqref{eq:comut4} to replace the commutator, one gets the results~\eqref{eq:com1324}.
\end{proof}

\begin{rema}
Lemma~\ref{prop:def-nico} allows us to give an alternative definition of $\aw$, where the defining relations~\eqref{relaw31v} and~\eqref{relaw41v}
are replaced by
 \begin{subequations}
\begin{gather}
C_{I_1I_2I_4}=- [C_{I_2I_3},C_{I_1I_3I_4}]_q + C_{I_2}C_{I_1I_4}+C_{I_3}C_{I_1I_2I_3I_4} , \qquad\text{if $I_1=\varnothing$ or $H=\varnothing$} ,\label{Rel-1}\\
C_{I_1I_3I_4}=-[C_{I_1I_2I_4},C_{I_2I_3}]_q +C_{I_1I_4}C_{I_3}+C_{I_2}C_{I_1I_2I_3I_4} ,\qquad\text{if $H=\varnothing$ or $I_4=\varnothing$} ,\label{Rel-2}\\
C_{I_2I_3}=-[C_{I_1I_3I_4},C_{I_1I_2I_4}]_q + C_{I_3}C_{I_2}+C_{I_1I_4}C_{I_1I_2I_3I_4} ,\qquad\text{if $I_4=\varnothing$ or $I_1=\varnothing$}.\label{Rel-3}
\end{gather}
 \end{subequations}
for any monotonic sequence of five adjacent connected subsets $(I_1,I_2,H,I_3,I_4)$, represented by the picture
\[\ldots \bullet,\underbrace{\bullet,\dots,\bullet}_{I_1},\underbrace{\bullet,\dots,\bullet}_{I_2},\underbrace{\bullet,\dots,\bullet}_{H},
\underbrace{\bullet,\dots,\bullet}_{I_3},\underbrace{\bullet,\dots,\bullet}_{I_4},\bullet,\ldots .\]
Indeed, when $H=\varnothing$, the four sets $I_1$, $I_2$, $I_3$, $I_4$ become adjacent:~\eqref{Rel-1} and~\eqref{Rel-2} are equivalent to
\eqref{relaw42} and~\eqref{relaw44}. For the other cases, one needs a relabeling of the sets: for $I_1=\varnothing$,
$(I_2,H,I_3,I_4)\to(I_1,I_2,I_3,I_4)$ and for $I_4=\varnothing$,
$(I_1,I_2,H,I_3)\to(I_1,I_2,I_3,I_4)$. Then,~\eqref{Rel-1} for $I_1=\varnothing$,~\eqref{Rel-2} for $I_4=\varnothing$,~\eqref{Rel-3} for $I_1=\varnothing$,~\eqref{Rel-3} for $I_4=\varnothing$ are equivalent to~\eqref{relaw41},~\eqref{relaw43},~\eqref{relaw45},~\eqref{relaw46}, respectively.

This set of relations already appeared in~\cite{DeC} as relations satisfied by the intermediate Casimir elements of ${\rm U}_q(\mathfrak{sl}_2)^{\otimes n}$ and in~\cite{CL} to describe the relations
of the Skein algebra $\text{Sk}_q(\Sigma_{0,n+1})$.
\end{rema}

\subsection[Definition of elements C\_\{I\_1\dots I\_k\}]{Definition of elements $\boldsymbol{C_{I_1\dots I_k}}$}

In this section, we show how to define unambiguously elements $C_{I_1\dots I_k}$ in $\aw$, for any monotonic sequence $(I_1,\dots, I_k)$ of non-empty connected subsets of $\{1,\dots,n\}$.
We suppose that the holes between $I_{j}$ and $I_{j+1}$, denoted by $H_j$, are non empty.
Visually, we have
\[\ldots \bullet,\underbrace{\bullet,\dots,\bullet}_{I_1},\underbrace{\bullet,\dots,\bullet}_{H_1},\underbrace{\bullet,\dots,\bullet}_{I_2},\underbrace{\bullet,\dots,\bullet}_{H_2},\ldots\ldots ,\underbrace{\bullet,\dots,\bullet}_{I_{k}},\bullet,\ldots .\]
We define $C_{I_1I_2\dots I_k}$ recursively on $k$.

The case $k=2$ (a single hole) was already dealt with in (\ref{def-CI2}). For $k\geq 2$ arbitrary, we choose a hole, that is we choose $a\in\{1,\dots,k-1\}$ and we set
\begin{equation}\label{def-CIgen}
C_{I_1\dots I_k}:=-[C_{I_{\leq a}H_a},C_{H_aI_{>a}}]_q+C_{I_{\leq a}}C_{I_{>a}}+C_{H_a}C_{I_{\leq a}H_aI_{>a}} ,
\end{equation}
where $I_{\leq a}$ and $I_{>a}$ respectively mean $I_1\dots I_a$ and $I_{a+1}\dots I_k$. All subsets involved in the right-hand side have strictly less than $k-1$ holes,
so that the corresponding elements are already defined in a~preceding step of the recursion.
From this definition, one sees that $C_{I_1\dots I_k}$ is of degree $k$ in the generators. One sees also by an easy recursion that
\[ C_{I_1\dots I_k}=C_{I_k\dots I_1}^{\rm up} .\]
To calculate completely $C_{I_1\dots I_k}$ in terms of the generators, we have to use (\ref{def-CIgen}) $k-1$ times using successively all holes of the sequence.
The following lemma shows that the resulting element does not depend on the order with which the holes are selected.

\begin{lemm}\label{lem:Commg}
 Let $k\geq 2$, $(I_1,\dots, I_k)$ be a monotonic sequence of non-empty connected subsets of $\{1,\dots,n\}$ with non-empty holes between $I_{j}$ and $I_{j+1}$.
\begin{enumerate}\itemsep=0pt
\item[$1.$] The definition~\eqref{def-CIgen} of $C_{I_1\dots I_k}$ is independent of the choice of $a\in\{1,\dots,k-1\}$.
\item[$2.$] Let $I=I_1 \cup \dots \cup I_{k-1}$ and $J$ be a connected set in $\{1,\dots,n\}$.
If $I\cap J=\varnothing$ or $J\subset I$ or~$I\subset J$, then we have $[C_{I_1\dots I_{k-1} },C_J]=0$.
\item[$3.$]
Let $1\leq p \leq k-1$, one gets
\begin{equation}\label{eq:com4}
 [C_{I_1I_2\dots I_p} , C_{I_{p+1}\dots I_{k}}]=0 .
\end{equation}
\end{enumerate}
\end{lemm}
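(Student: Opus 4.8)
The plan is to prove the three statements simultaneously by induction on the number $k$ of blocks of the sequence. The base case $k=2$ is immediate: statement~1 is empty since there is a single hole, while statements~2 and~3 reduce to $[C_{I_1},C_J]=0$ and $[C_{I_1},C_{I_2}]=0$, both instances of \eqref{relcommv} (using the symmetry of the commutator for the inclusion case $J\subset I$). In the inductive step I would establish statement~1 first, then deduce statements~2 and~3, because the proof of statement~1 at a given rank only uses the commutation relations of statements~2 and~3 for the strictly shorter sequences produced by \eqref{def-CIgen}, and because statement~3 at the current rank will be obtained from statement~2 at the current rank.

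For statement~1 I would first note that, by the inductive hypothesis, the two pieces obtained after splitting at a hole $H_a$ are already unambiguously defined; hence comparing two choices $a<b$ amounts to comparing the two orders in which one successively splits at $H_a$ and at $H_b$. By chaining, this reduces to two consecutive holes $b=a+1$: after splitting at $H_a$ and $H_{a+1}$ in either order one is left with the three composite blocks $I_{\leq a}$, $I_{a+1}$, $I_{>a+1}$ separated by $H_a$ and $H_{a+1}$, and one must check that the two resulting expressions agree. The main obstacle lies precisely here: the outer blocks $I_{\leq a}$ and $I_{>a+1}$ are in general not connected, so the relations of Lemmas~\ref{prop:def-nico}, \ref{lem:relc1} and~\ref{lemm:relcom2} cannot be quoted verbatim. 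What I expect to work is that the derivations of those relations used only \eqref{def-CI2}, the $q$-Jacobi identities \eqref{eq:qjacobi1}--\eqref{eq:qjacobi3}, and commutations of the type \eqref{relcommv}; since statements~2 and~3 at lower rank supply exactly the commutations needed for the composite blocks, the same manipulations carry over and give the equality. Verifying this carry-over, i.e.\ that every commutation invoked in the underlying four-subset computation is licensed by the inductive hypothesis, is the delicate point of the whole lemma.

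To prove statement~2 I would expand $C_{I_1\dots I_{k-1}}$ by \eqref{def-CIgen} at a hole $H_a$ chosen according to the position of $J$, and show that $C_J$ commutes with each of the six terms on the right-hand side. Every such term is a multi-hole element of a sequence with strictly fewer blocks, so the inductive hypothesis applies once the appropriate inclusion or disjointness condition holds for its block-union. The verification rests on the elementary combinatorial facts that a connected $J$ with $J\subset I$ must lie inside a single $I_j$, that a connected $J$ disjoint from $I=I_1\cup\dots\cup I_{k-1}$ lies inside one hole or outside the whole span, and that $I\subset J$ forces $J$ to contain the entire span $I_1\cup H_1\cup\dots\cup I_{k-1}$; in each case one checks that $J$ is disjoint from, contained in, or contains the block-union of every piece, and concludes by the inductive hypothesis together with \eqref{relcommv}.

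Finally, statement~3 follows from statement~2 once one records the localisation property, proved by a straightforward induction on \eqref{def-CIgen}, that $C_{I_{p+1}\dots I_k}$ is a polynomial in the generators $C_L$ with $L$ a connected subset of the span $I_{p+1}\cup H_{p+1}\cup\dots\cup I_k$. Since this span is disjoint from $I_1\cup\dots\cup I_p$, the disjoint case of statement~2 gives $[C_{I_1\dots I_p},C_L]=0$ for each such generator $L$, whence $[C_{I_1\dots I_p},C_{I_{p+1}\dots I_k}]=0$, which is exactly \eqref{eq:com4}.
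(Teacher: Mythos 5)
Your proposal is correct and follows essentially the same route as the paper: a simultaneous induction on $k$, with item~1 reduced to two consecutive holes and settled by expanding via~\eqref{def-CIgen}, the $q$-Jacobi identities, and the commutations supplied by the inductive hypothesis for shorter sequences, and item~2 handled by the same case analysis on the position of $J$. The only (harmless) deviation is in item~3, where you localise $C_{I_{p+1}\dots I_k}$ to generators supported in its span and invoke the disjoint case of item~2, whereas the paper expands both factors into generators and applies~\eqref{relcommv} directly.
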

\begin{proof}
We prove the three properties simultaneously by a recursion on $k$.
If $k=2$, item (1) is obvious, while for items (2) and (3), there is no element with hole and they are implied by the defining relations~\eqref{relcommv}.

Let $k>2$ and suppose that the lemma is true for $k'\!<k$. Consider the sequence $(I_1,I_2,\dots, I_{k})$.
Take two holes $H_a$ and $H_{a+1}$ of the sequence. We will show that using $H_a$ or $H_{a+1}$,~\eqref{def-CIgen} provides the same element.
To lighten the presentation, we introduce the following notations:
\begin{equation*}
 I=I_1\dots I_a ,\qquad H=H_a ,\qquad J=I_{a+1} ,\qquad H'=H_{a+1} ,\qquad K=I_{a+2}\dots I_k .
\end{equation*}
Relation~\eqref{def-CIgen} with the hole $H=H_a$ gives
\begin{equation*}
C^{(a)}_{I_1\dots I_k}=-[C_{IH},C_{HJK}]_q+C_{I}C_{JK}+C_{H}C_{IHJK} ,
\end{equation*}
to be compared with
\begin{equation}\label{khole2}
C^{(a+1)}_{I_1\dots I_k}=-[C_{IJH'},C_{H'K}]_q+ C_{IJ}C_{K}+C_{H'}C_{IJH'K} ,
\end{equation}
obtained from relation~\eqref{def-CIgen} with the hole $H'=H_{a+1}$.

Using~\eqref{def-CIgen} for $C_{HJK}$ with the hole $H'$ (and the recurrence hypothesis), one gets
\begin{equation}\label{khole1}
C^{(a)}_{I_1\dots I_k}=\big[C_{IH},[C_{HJH'},C_{H'K}]_q-C_{HJ}C_{K}-C_{H'}C_{HJH'K}\big]_q+C_{I}C_{JK}+C_{H}C_{IHJK} .
\end{equation}
From~\eqref{eq:com4} and the recursion hypothesis, one gets the following commutation relations
\begin{equation*}
 [C_{IH},C_{K}]=[C_{IH},C_{H'}]=[C_{I},C_{H'K}]=[C_{H},C_{H'K}]=[C_{IH},C_{H'K}]=0 .
\end{equation*}
Using the $q$-Jacobi identity and the above commutation relations,~\eqref{khole1} becomes
\begin{gather*}
C^{(a)}_{I_1\dots I_k}=\big[[C_{IH},C_{HJH'}]_q,C_{H'K}]\big]_q -[C_{IH},C_{HJ}]_q C_{K}-C_{H'}[C_{IH},C_{HJH'K}]_q \\
\hphantom{C^{(a)}_{I_1\dots I_k}=}{}+C_{I}C_{JK}+C_{H}C_{IHJK} .
\end{gather*}
Then, we use the definition (\ref{def-CIgen}) for a fewer number of holes to expand all the $q$-commutators
and we get the right hand side of~\eqref{khole2}. This proves the point (1) for $k$.

We consider the
sequence $I=I_1\dots I_{k-1}$ and $J$ a connected subset. By recurrence hypothesis, $C_I$ is defined uniquely by~\eqref{def-CIgen} independently of $a$.
Choose $a$ such that $1\leq a \leq k-2$.
If~$I\cap J=\varnothing$, since $J$ is connected, we have either $J\cap H_a=\varnothing$ or $J\subset H_a$.
Then, through the use of~\eqref{def-CIgen}, the commutator $[C_I,C_J]$ reduces to combinations of commutators $[C_K,C_J]$ with
sequences $K$ of at most $k-3$ holes and obeying either $K\cap J=\varnothing$ or $J\subset K$. Thus, they vanish by the recursion hypothesis.

We suppose now that $J\subset I$. Since $J$ is connected, then we have either $J\subset I_{\leq a}$, or $J\subset I_{>a}$.
In all cases, using~\eqref{def-CIgen} again leads to commutators that vanish due to the recursion hypothesis.

When $I\subset J$, the use of~\eqref{def-CIgen} again allows to construct $C_I$ in terms of sequences with less holes, and to conclude using the recursion hypothesis which finishes the proof of item (2)

Item (3) is proven by expressing $C_{I_1I_2\dots I_p}$ and $C_{I_{p+1}\dots I_k}$ only in terms of the generators of $\aw$ by recursively using~\eqref{def-CIgen} and then~\eqref{relcommv}.
\end{proof}

\begin{exam}
Both following expressions of the element $C_{135}$ are equivalent:
\begin{equation*}
 C_{135} =-[C_{134},C_{45}]_q+C_{13}C_{5}+C_{4}C_{1345}=-[C_{12},C_{245}]_q+C_1C_{45}+C_2C_{1245} .
\end{equation*}
\end{exam}

\section[Braid group action as automorphisms on aw(n)]{Braid group action as automorphisms on $\boldsymbol{\aw}$}\label{sec-auto}

In this subsection, we define some maps on the generators of $\aw$ and show that they extend to automorphisms of $\aw$.
Then, we show that they satisfy relations including the defining relations of the braid group. We emphasize that this defines an action on $\aw$ of the braid group on $n+1$ strands.

\subsection[Definition of the maps r\_i]{Definition of the maps $\boldsymbol{r_i}$}

We start by defining an algebra anti-automorphism which is the identity on the generators:
\begin{equation}\label{eq:up}
 {\cdot}^{\rm up}\colon \ C_I\mapsto C_I \quad \text{for any connected subset $I$} .
\end{equation}
To check that ${\cdot}^{\rm up}$ is well defined, note first that the commutation relations (\ref{relcommv}) are obviously preserved. Then the defining relation (\ref{relaw31v}) is sent to relation (\ref{relaw32}) corresponding to $(I_3,I_2,I_1)$. Similarly the defining relation (\ref{relaw41v}) to relation (\ref{relaw43}) corresponding to $(I_4,I_3,I_2,I_1)$.

Note that applying the definition on ${\cdot}^{\rm up}$ on the formula (\ref{def-CI2}) defining elements $C_{I_1I_2}$, we find:
\[C_{I_1I_2}^{\rm up}=C_{I_2I_1}\quad \text{for any disjoint connected subsets $I_1$, $I_2$.}\]
To prove the above relation, we have used the anti-morphism property of ${\cdot}^{\rm up}$ and some commutation relations.

\begin{rema}
Note that there is also an algebra automorphism which is the identity on the generators and sends $q$ to $q^{-1}$. It also sends $C_{I_1I_2}$ to $C_{I_2I_1}$. The verification is similar as for the anti-automorphism ${\cdot}^{\rm up}$.
\end{rema}

We define maps on the generators of $\aw$ extended (keeping the same names) multiplicatively on any product of the generators of $\aw$
and linearly on any linear combination. We prove that this indeed results in well-defined automorphisms of $\aw$ in Theorem~\ref{theo-matricesR}.
 \begin{defi}\label{def:rrb} Let $i\in\{1,\dots,n-1 \}$. The map $r_i$ on the generators of $\aw$ is defined by
\begin{gather}\label{def-rig}
r_i\colon \ \begin{cases}
C_{i+1\dots k} \mapsto C_{i,i+2\dots k} & \text{for }k\geq i+1 ,\\
C_{j \dots i} \mapsto C_{i+1,i-1\dots j} \quad & \text{for }j\leq i ,\\
 C_{j\dots k} \mapsto C_{j\dots k} & \text{if $\{i,i+1\}\subset \{j,\dots,k\}$ or $\{i,i+1\}\cap \{j,\dots,k\}=\varnothing$.}
\end{cases}
\end{gather}
The map $r_0$ on the generators of $\aw$ is defined by
\begin{equation*}
r_0\colon \
\begin{cases}
C_{\{j\dots k\}}\mapsto C_{\{j\dots k\}} ,& 2\leq j\leq k\leq n , \\
C_{\{12\dots k\}}\mapsto C_{\{k+1\dots n\},1} ,& 1\leq k\leq n .
\end{cases}
\end{equation*}
Finally, for $a\in\{0,1,\dots,n-1\}$, the map $\rb_a$ on the generators of $\aw$ is defined by
\begin{equation*}
 \rb_a(C_I)=(r_a(C_I))^{\rm up}\qquad \text{for any connected subset $I$,}
\end{equation*}
where the involution ${.}^{\rm up}$ is defined in~\eqref{eq:up}.
\end{defi}
Let us remark at once that $r_a$ and $\rb_a$ are related by the involution ${.}^{\rm up}$ in general, namely,
\begin{equation}\label{shifti}
 \rb_a(X^{\rm up})=(r_a(X))^{\rm up} \qquad \text{for any $X\in\aw$.}
\end{equation}
Indeed, this is true when $X=C_I$ is a generator, by definition of $\rb_a$, and thus it is also true for any product of the generators using the antimorphism property of ${.}^{\rm up}$.
\begin{exam}
Note that, for $i=\{1,\dots,n-1\}$, the maps $r_i$ and $\rb_i$ restricted to the central elements $C_1,\dots,C_n$ act as the transposition of $C_i$ and $C_{i+1}$ (and they leave the central element~$C_{1\dots n}$ invariant). The maps $r_0$ and $\rb_0$ exchange $C_1$ with $C_{1\dots n}$ and leave the other $C_i$ invariant.

Here are some examples of the action on non-central generators $C_I$:
\begin{gather*}
r_2(C_{34})=C_{24} ,\qquad\! r_2(C_{12})=r_2(C_{21})=C_{31}\qquad\! \text{and}\qquad\! \rb_2(C_{34})=C_{42} ,\qquad\! \rb_2(C_{12})=C_{13}.
\end{gather*}
For $n=3$, the actions of $r_0$ and $\rb_0$ are
\[
r_0(C_{12})=C_{31} ,\qquad r_0(C_{23})=C_{23}\qquad \text{and}\qquad \rb_0(C_{12})=C_{13} ,\qquad \rb_0(C_{23})=C_{23}.
\]
Note that $r_0$ cannot be expressed in terms of $r_i$, $\rb_i$, $i=1,\dots,n-1$, since $r_0(C_1)=C_{1\dots n}$.
\end{exam}

There exist explicit formulas on arbitrary elements $C_I$. Indeed, even if the image by the maps~$r_a$,~$\rb_a$ of an arbitrary element $C_{I_1\dots I_k}$ is not always another element $C_{J_1\dots J_{k'}}$, there is still an explicit formula.
\begin{prop}\label{prop-ri}
Let $I=(I_1,\dots,I_k)$ be a monotonic sequence of connected subsets.

\begin{enumerate}\itemsep=0pt
\item[$1.$] For $i\in\{1,\dots,n-1\}$, the maps $r_i$, $\rb_i$ act on $C_{I}$ as follows:
\begin{itemize}\itemsep=0pt
 \item If $I\cap\{i,i+1\}$ has exactly one element, denoted $\{a\}=I\cap\{i,i+1\}$, then
\begin{align}\label{def-ri}
&r_i(C_I)=-[C_{i,i+1},C_{I}]_q+C_{ \{i,i+1\}\setminus\{a\} }C_{I\setminus \{a\}}+C_{a}C_{I\cup\{i,i+1\}} ,
\\
\label{def-rbi}
&\rb_i(C_I)= -[C_{I},C_{i,i+1}]_q+C_{ \{i,i+1\}\setminus\{a\} }C_{I\setminus \{a\}}+C_{a}C_{I\cup\{i,i+1\}} .
\end{align}
\item
If $I\cap\{i,i+1\}$ is empty or contains exactly two elements, then \begin{equation} r_i(C_I)=\rb_i(C_I)=C_I. \label{def-rrbi}\end{equation}
\end{itemize}
\item[$2.$] The maps $r_0$, $\rb_0$ act on $C_{I}$ as follows:
\begin{alignat}{3}
& r_0(C_I)=-[C_{2\dots n},C_{I}]_q+C_{ \{2,\dots,n\}\setminus I }C_{1}+C_{I\cap\{2,\dots,n\}}C_{1\dots n} , \quad && \text{if $1\in I$,} & \nonumber\\
& \rb_0(C_I)= -[C_{I},C_{2\dots n}]_q+C_{ \{2,\dots,n\}\setminus I }C_{1}+C_{I\cap\{2,\dots,n\}}C_{1\dots n} , && \text{if $1\in I$,} & \nonumber\\
& r_0(C_I)=\rb_0(C_I)=C_I , && \text{if $1\notin I$.} &\label{def-r0}
\end{alignat}
\end{enumerate}
\end{prop}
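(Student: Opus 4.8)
The plan is to prove the explicit formulas by reducing everything to the defining action of $r_i$, $\rb_i$, $r_0$, $\rb_0$ on the single elements $C_I$ (indexed by a \emph{connected} subset $I$) given in Definition~\ref{def:rrb}, and then propagating this through the recursive definition~\eqref{def-CIgen} of the general elements $C_{I_1\dots I_k}$. Since $r_i$ and $\rb_i$ are defined as algebra (anti-)morphisms, and since $\rb_a(C_I)=(r_a(C_I))^{\rm up}$ with the relation~\eqref{shifti}, it suffices to treat $r_i$ and $r_0$; the formulas for $\rb_i$, $\rb_0$ then follow by applying ${\cdot}^{\rm up}$, which reverses the order in each $q$-commutator and hence turns $[C_{i,i+1},C_I]_q$ into $[C_I,C_{i,i+1}]_q$ (using $C_{i,i+1}^{\rm up}=C_{i,i+1}$ and $C_I^{\rm up}=C_{I_k\dots I_1}$ combined with the already-established $C_{I_1\dots I_k}=C_{I_k\dots I_1}^{\rm up}$).

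First I would verify Part~1 in the base case where $I$ itself is a single connected subset with $|I\cap\{i,i+1\}|=1$. Here the claimed formula~\eqref{def-ri} must reproduce the defining values in Definition~\ref{def:rrb}: for $I=\{i+1,\dots,k\}$ with $a=i+1$, the right-hand side $-[C_{i,i+1},C_I]_q+C_iC_{I\setminus\{a\}}+C_aC_{I\cup\{i,i+1\}}$ should equal $C_{i,i+2\dots k}$, and for $I=\{j,\dots,i\}$ with $a=i$ it should equal $C_{i+1,i-1\dots j}$. These are precisely instances of the defining relation~\eqref{def-CI2} for an element with one hole (the hole being $\{i\}$ or $\{i+1\}$ respectively), so the base case is a direct reading-off of~\eqref{def-CI2}. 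The cases in~\eqref{def-rrbi} where $\{i,i+1\}\subset I$ or $\{i,i+1\}\cap I=\varnothing$ are immediate from the third line of~\eqref{def-rig}.

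The substance is the inductive step: I would proceed by recursion on the number of holes $k-1$, assuming the formulas hold for sequences with fewer holes. Given $C_{I_1\dots I_k}$, expand it via~\eqref{def-CIgen} using a hole $H_a$ chosen so that $\{i,i+1\}$ is not split across the chosen cut (this is possible because $\{i,i+1\}$ meets at most one of the connected pieces appearing), apply $r_i$ as an algebra morphism to each factor, invoke the induction hypothesis on the lower-hole subexpressions, and then repackage the result using the $q$-Jacobi relations~\eqref{eq:qjacobi1}--\eqref{eq:qjacobi3} together with the commutations~\eqref{relcommv} and those collected in Lemmas~\ref{prop:def-nico} and~\ref{lem:Commg}. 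The goal is to show the image collapses back into the single $q$-commutator form on the right of~\eqref{def-ri}. For $r_0$ in Part~2, the argument is identical in structure but uses the defining action $C_{12\dots k}\mapsto C_{\{k+1\dots n\},1}$, and the role played above by $C_{i,i+1}$ is now played by $C_{2\dots n}$, with the complementary set $\{2,\dots,n\}\setminus I$ and the total Casimir $C_{1\dots n}$ appearing exactly as in~\eqref{def-r0}.

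The main obstacle will be the bookkeeping in the inductive step: one must check that the many commutators generated by applying a morphism termwise to~\eqref{def-CIgen} genuinely reorganize into the compact commutator $[C_{i,i+1},C_I]_q$ (resp. $[C_{2\dots n},C_I]_q$), which requires that all the ``cross terms'' cancel by the $q$-Jacobi identities and by the centrality/commutation relations of Lemma~\ref{lem:Commg}. I expect the interplay between the choice of hole $H_a$ and the position of $\{i,i+1\}$ (whether it lands inside a single $I_j$, inside a hole, or straddles an interface) to generate several cases, each needing a slightly different application of~\eqref{eq:qjacobi1}--\eqref{eq:qjacobi3}; verifying that $r_i$ indeed respects the full set of defining relations of $\aw$ (so that it extends to a well-defined endomorphism before one even computes its values) is logically prior, but that well-definedness is deferred to Theorem~\ref{theo-matricesR}, so here I would only establish the formulas assuming the maps are well-defined morphisms.
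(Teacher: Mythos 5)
Your proposal is correct and follows essentially the same route as the paper: reduce $\rb_a$ to $r_a$ via the anti-automorphism ${\cdot}^{\rm up}$, check the base case directly against Definition~\ref{def:rrb} and~\eqref{def-CI2}, and then induct on the number of holes by expanding $C_{I_1\dots I_k}$ through~\eqref{def-CIgen}, noting that $r_i$ fixes one member of the $q$-commutator and repackaging the rest with the $q$-Jacobi identities and the commutation relations (the paper delegates this last manipulation to the computation around~\eqref{app-relKIJ} in the appendix). The only cosmetic difference is that you frame the invariance of one $q$-commutator factor as requiring a choice of hole, whereas it holds automatically for any hole once $|I\cap\{i,i+1\}|\le 1$.
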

\begin{proof}
First, note that the formulas for $\rb_a$, $a=0,1,\dots,n-1$, follow from the ones for $r_a$, using~\eqref{shifti}.

Then, we prove the formulas by recursion on $k$. The case $k=1$ corresponds to a sequence without hole and the actions~\eqref{def-ri}--\eqref{def-r0} correspond immediately to the ones of definition~\ref{def:rrb}, using the Definition~\eqref{def-CI2} of elements $C_{I_1I_2}$ with one hole.

Now, we take $k>1$ and we use the definition~\eqref{def-CIgen} to write
\[C_I=-[C_{I_1H},C_{HI_2}]_q+C_{I_1}C_{I_2}+C_H C_{I_1HI_2} ,\]
with all elements appearing in the right hand side having fewer than $k$ holes. So we can apply the induction hypothesis for the action of $r_i$. When $\{i,i+1\}\subset I$ or $\{i,i+1\}\cap I=\varnothing$, the map $r_i$ acts trivially on each term in the right hand side and the result is immediate. When $\{i,i+1\}\cap I=\{a\}$, the map $r_i$ leaves invariant one term in the $q$-commutator. In this case, the calculation to be done is exactly the same as the one detailed in the appendix around formula~\eqref{app-relKIJ}.

The verification for $r_0$ follows exactly the same steps and we leave it to the reader.
\end{proof}

Thanks to this proposition, we can relate the action of $r_a$, $\rb_a$ to a usual commutator.
\begin{coro}
Let $I=(I_1,\dots,I_k)$ be a monotonic sequence of connected subsets. We have
\begin{gather*}
r_i(C_I)-\rb_i(C_I)=\displaystyle\frac{q+q^{-1}}{q-q^{-1}}[C_I,C_{i,i+1}] , \qquad i\in\{1,\dots,n-1\} ,\\
r_0(C_I)-\rb_0(C_I)=\displaystyle\frac{q+q^{-1}}{q-q^{-1}}[C_I,C_{2\dots n}] .
\end{gather*}
\end{coro}
\begin{proof}
If $I\cap\{i,i+1\}$ is empty or contains exactly two elements, then $C_{i,i+1}$ commutes with $C_I$, see Lemma~\ref{lem:Commg}. In this case, we also have $r_i(C_I)=\rb_i(C_I)$ so the formula is checked. Similarly, if $1\notin I$, then $C_{2\dots n}$ commutes with $C_I$, again by Lemma~\ref{lem:Commg}, and we also have $r_0(C_I)=\rb_0(C_I)$.

Otherwise, the formulas follow immediately from
\[ [A , B]_q -[B , A]_q=\frac{q+q^{-1}}{q-q^{-1}}[A , B] ,\]
since, in $r_a(C_I)$ and $\rb_a(C_I)$, the right-hand side outside the $q$-commutators coincide.
\end{proof}

The actions of $r_i$ and $\rb_i$ on an element~$C_{I_1\dots I_k}$ may be simple and give another element~$C_{J_1\dots J_{k'}}$. The conditions on $i$ and the sets $I_j$ such that
this happens are given in the following proposition.
\begin{prop}\label{prop-act-ri}$\ $
Let $I=(I_1,\dots,I_k)$ be a monotonic sequence of connected subsets.
\begin{enumerate}\itemsep=0pt
\item[$1.$] Let $i\in\{1,\dots,n-1\}$. The action of $r_i$ on $C_{I_1\dots I_k}$ gives the element
\begin{itemize}\itemsep=0pt
\item where $i+1$ is replaced by $i$ in $I_1\dots I_k$, if the sequence $I_1<\dots<I_k$ is increasing and contains {$i+1$} and not $i$.
More precisely, if $i+1$ is the smallest element of a subset~$I_\ell$, then
\begin{equation*}
 r_i(C_I)= C_{I_1\dots I_{\ell-1}, i, I_{\ell}\setminus \{i+1\}, I_{\ell+1},\dots I_k}.
\end{equation*}

\item where $i$ is replaced by $i+1$ in $I_1\dots I_k$, if the sequence $I_1>\dots>I_k$ is decreasing and contains {$i$} and not $i+1$.
More precisely, if $i$ is the biggest element of a subset $I_\ell$, then
\begin{equation*}
 r_i(C_I)= C_{I_1\dots I_{\ell-1}, i+1, I_{\ell}\setminus \{i\}, I_{\ell+1},\dots I_k}.
\end{equation*}
\end{itemize}
\item[$2.$] If the sequence is increasing: $I_1 < I_2 <\dots< I_k$, and $1\in I_1$. We have
\begin{equation*}
 r_0( C_{I_1I_2\dots I_k} ) = C_{H_k H_{k-1}\dots H_1, 1},
\end{equation*}
where $H_1<\dots < H_k$ is the sequence of connected subsets complementary to $(I_1,\dots,I_k)$ in $\{1,\dots,n\}$.
\end{enumerate}
\end{prop}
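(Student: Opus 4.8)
The plan is to realise both claimed elements as instances of the recursive definition~\eqref{def-CIgen} and to prove the equalities in $\aw$ by induction on the number $k$ of blocks. It suffices to treat the increasing case of item~$1$: the decreasing case is strictly parallel (reverse the order of the blocks throughout), and item~$2$ follows the same pattern with $r_0$. So assume $I_1<\dots<I_k$, write $I:=I_1\cup\dots\cup I_k$, and suppose $i+1=\min I_\ell$ and $i\notin I$; then $I\cap\{i,i+1\}=\{i+1\}$, and Proposition~\ref{prop-ri} gives
\begin{equation*}
 r_i(C_I)=-[C_{i,i+1},C_I]_q+C_iC_{I\setminus\{i+1\}}+C_{i+1}C_{I\cup\{i\}} .
\end{equation*}
The target element is $C_{\rm new}:=C_{I_1\dots I_{\ell-1},i,I_\ell\setminus\{i+1\},I_{\ell+1}\dots I_k}$, whose defining sequence is obtained from $(I_1,\dots,I_k)$ by deleting $i+1$ from $I_\ell$ and inserting the singleton block $\{i\}$ in front of it; the two new holes created around $\{i\}$ are $H_{\ell-1}\setminus\{i\}$ and $\{i+1\}$.

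For $\ell=1$ the statement is immediate: the left part is empty, so the displayed formula for $r_i(C_I)$ is literally the expansion~\eqref{def-CIgen} of $C_{\rm new}$ at its hole $\{i+1\}$ (here $C_{i,i+1}=C_{\{i\}\{i+1\}}$, $C_i=C_{\{i\}}$, and $C_{I\cup\{i\}}$ is the merged block). For $\ell\geq 2$ I argue by induction on $k$, the base case $k=2$ (which forces $\ell=2$) being treated separately below. So let $k\geq 3$. Since $i$ lies in the single hole $H_{\ell-1}$ and $i\notin I$, there is a hole $H_a$ with $a\neq\ell-1$; split $C_I$ via~\eqref{def-CIgen} at this $H_a$. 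Then $H_a$ contains neither $i$ nor $i+1$ and the active pair $\{i,i+1\}$ stays inside exactly one of the two halves, so each of the six factors produced either contains neither of $i,i+1$ — hence is fixed by~\eqref{def-rrbi} and~\eqref{relcommv} — or contains only $i+1$, occurring as the minimum of a block equal to (or merging) $I_\ell$, and has strictly fewer blocks, so the induction hypothesis replaces $i+1$ by $i$ in it. Since the merge at $H_a$ leaves $\min I_\ell=i+1$ untouched, reassembling the images reproduces exactly the expansion~\eqref{def-CIgen} of $C_{\rm new}$ at the same hole $H_a$, which gives $r_i(C_I)=C_{\rm new}$.

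The induction bottoms out at the single configuration that cannot be split away from the active pair: $k=2$ with $\ell=2$, i.e.\ a sequence $(I_1,I_2)$ whose unique hole $H$ satisfies $i=\max H$ and $i+1=\min I_2$. Here I expand $C_{I_1I_2}$ through~\eqref{def-CI2}; each of the six factors $C_{I_1H},C_{HI_2},C_{I_1},C_{I_2},C_H,C_{I_1HI_2}$ is a single block, so $r_i$ acts on it by the explicit rules~\eqref{def-rig} (the two factors $C_{HI_2}$ and $C_{I_1HI_2}$ contain both $i$ and $i+1$ and are fixed). Independently I expand the target $C_{\rm new}=C_{I_1,i,I_2\setminus\{i+1\}}$ through~\eqref{def-CIgen} at the hole $H\setminus\{i\}$. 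The two resulting expressions share their middle term $C_{I_1}\,C_{\{i\},I_2\setminus\{i+1\}}$, but their $q$-commutator parts are written in terms of different generators, and equating them is precisely a relation of $\mathfrak{aw}(3)$-type. Establishing this last equality — using~\eqref{def-CI2} once more, the relations of Lemma~\ref{prop:def-nico} and the $q$-Jacobi identities~\eqref{eq:qjacobi1}--\eqref{eq:qjacobi3} — is the only genuine computation, and I expect it to be the main obstacle; it is the calculation deferred to the appendix.

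For item~$2$ the argument has the same structure. Using the $r_0$-formula of Proposition~\ref{prop-ri} valid for $1\in I$, one first settles the single-block case directly, and then induces on $k$ by splitting~\eqref{def-CIgen} at a hole, the role of the active pair now being played by $\{1\}$ together with the block $\{2,\dots,n\}$; the complementary blocks $H_k>\dots>H_1$ appear because $r_0$ trades each block of $I$ for its adjacent gap, reversing the order, and leaves the terminal $\{1\}$ in place. As above, the only step requiring the hands-on computation is the two-block base case; everything else is bookkeeping with~\eqref{def-CIgen}, Lemma~\ref{lem:Commg} and~\eqref{def-r0}.
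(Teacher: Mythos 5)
Your overall architecture coincides with the paper's: induct on the number of blocks, split $C_I$ via~\eqref{def-CIgen} at a hole disjoint from $\{i,i+1\}$, and isolate the irreducible configuration $k=\ell=2$ (your direct treatment of $\ell=1$ through Proposition~\ref{prop-ri} is a pleasant small shortcut). But the argument stops exactly where the content is. For $k=\ell=2$ you write down the two expansions, observe that equating them is ``a relation of $\mathfrak{aw}(3)$-type'', and defer it. When the hole $H$ has size $\geq 2$, the identity you need involves the five adjacent blocks $I_1$, $H\setminus\{i\}$, $\{i\}$, $\{i+1\}$, $I_2\setminus\{i+1\}$; it is not among the four-block relations of Lemma~\ref{prop:def-nico}, and you cannot reach for Proposition~\ref{prop:gen} either, since its proof uses the automorphism property of the $r_i$, which is only established later (Theorem~\ref{theo-matricesR}) and whose proof relies on the present proposition. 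The missing idea is the paper's secondary induction on $|H|$: set $H'=H\setminus\{i\}$, write $C_{I_1I_2}=-[C_{I_1H'},C_{H'I_2}]_q+C_{I_1}C_{I_2}+C_{H'}C_{I_1H'I_2}$ using~\eqref{relaw43}, apply $r_i$ to the right-hand side using the smaller-hole case, and recognise the expansion of $C_{I_1,i,I_2\setminus\{i+1\}}$ at the hole $H'$; the base case $|H|=1$ is then exactly relation~\eqref{relaw47} applied to $(I_1,\{i\},\{i+1\},I_2\setminus\{i+1\})$. Without this (or an equivalent derivation of the five-block identity) the proof is incomplete. Incidentally, this computation is carried out in the body of the paper, not in the appendix.

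Part~2 has a similar but more serious gap. For $r_0$ you cannot choose a hole avoiding the ``active'' index: every left-hand piece of any splitting contains $1$, so after applying $r_0$ and the induction hypothesis you do not land on an expansion of the target via~\eqref{def-CIgen}; you land on the identity~\eqref{app-r0-k}, whose $q$-commutator interleaves the $H_j$'s and $I_j$'s and which is genuinely not ``bookkeeping with~\eqref{def-CIgen} and Lemma~\ref{lem:Commg}''. The paper proves~\eqref{app-r0-k} by first establishing the two-block case~\eqref{app-r0-2} from~\eqref{relaw44} (conjugating by $\rb_1\cdots\rb_{a-1}$ when $\{1\}$ is not adjacent to $H_1$), and then generating the general case by repeatedly applying the automorphisms $\rb_c$ to split $I_2$ into several blocks --- which is precisely why this part is deferred until after the morphism property of $r_1,\dots,r_{n-1}$ has been proved. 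Your sketch supplies neither of these steps.
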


\begin{proof}
Let $i\in\{1,\dots,n-1\}$. We detail the proof of the relation when the sequence ${I_1\!<\dots<I_{k}}$ is increasing and contains only $i+1$. The proof for the other case is similar and should be carried at the same time.
We make again a recursion on the number of holes. For $k=1$, we get the definition~\eqref{def-rig}.

We consider $I=I_1\dots I_{\ell-1} I_{\ell}\dots I_{k}$ with $k>1$ and $i+1\in I_{\ell}$. We denote $I'_{\ell}=I_{\ell}\setminus\{i+1\}$.

We assume first that $\ell>2$, and take $H$ to be the hole between $I_1$ and $I_2$. Since $\ell>2$, $H$~does not contain the indices $i$, $i+1$. We write $C_I$ as
\begin{equation*}
C_I = -[C_{I_1H} , C_{HI_{2}\dots I_{k}}]_q+
C_{I_1} C_{I_{2}\dots I_{k}}+C_{H} C_{I_1 HI_{2}\dots I_{k}}.
\end{equation*}
Applying $r_i$, we get by linearity
\begin{equation*}
r_i(C_I) = -[C_{I_1 H} , r_i(C_{HI_{2}\dots I_{k}})]_q+
C_{I_1} r_i(C_{I_{2}\dots I_{k}})+C_{H} r_i(C_{I_1 HI_{2}\dots I_{k}}),
\end{equation*}
where we have used~\eqref{def-rrbi} when $r_i$ acts trivially.
Since the remaining generators have one hole less, we can use the recursion hypothesis to get
\begin{equation}\label{eq:toto}
r_i(C_I) = -\big[C_{I_1 H} , C_{HI_{2}\dots I_{\ell-1},i,I'_{\ell}\dots I_{k}}\big]_q+
C_{I_1} C_{I_{2}\dots I_{\ell-1},i,I'_{\ell}\dots I_{k}}+C_{H} C_{I_1 HI_{2}\dots I_{\ell-1},i,I'_{\ell}\dots I_{k}}.
\end{equation}
One recognises in the right-hand side of~\eqref{eq:toto} the expression of $C_{I_1I_{2}\dots I_{\ell-1},i,I'_{\ell}\dots I_{k}}$.

We consider now the case where $\ell=1$ or $\ell=2$ and $k>2$. We start now with a hole~$H$ between~$I_{k-1}$ and $I_{k}$. By hypothesis $H$ does not contain $i$ nor $i+1$ and we can perform a~calculation analogous to the one done above to end the recursion.

Finally, it remains only to consider the case $k=\ell=2$. In this case, we use an induction on the size of the hole $H$ between $I_1$ and $I_2$. We still have $i+1\in I_2$ and $I'_2=I_2\setminus\{i+1\}$. If the size is $1$, then $H=\{i\}$ and applying $r_i$ on the defining formula for $C_{I_1I_2}$, we find
\[r_i(C_{I_1I_2}) = -[C_{i+1,I_1} , C_{HI_2}]_q+
C_{I_1} C_{HI'_2}+C_{i+1} C_{I_1 HI_{2}},\]
which is indeed equal to $C_{I_1,i,I'_2}$ thanks to relation~\eqref{relaw47} with $(I_1,I_2,I_3,I_4)\to (I_1,\{i\},{\{i+1\}},\allowbreak I'_2)$. Then if the size of $H$ is greater than 1, then we denote $H'=H\setminus\{i\}$ and we write $C_{I_1I_2}$ as follows:
\[C_{I_1I_2} = -[C_{I_1H'} , C_{H'I_2}]_q+
C_{I_1} C_{I_2}+C_{H'} C_{I_1 H'I_{2}},\]
which is relation~\eqref{relaw43}. We can then use the induction hypothesis to apply $r_i$ and find
\[r_i(C_{I_1I_2}) = -\big[C_{I_1H'} , C_{HI'_2}\big]_q+
C_{I_1} C_{i,I'_2}+C_{H'} C_{I_1 HI'_{2}},\]
which is, by~\eqref{def-CIgen}, equal to $C_{I_1,i,I'_2}$ as it should.

We postpone the proof of the relation for $r_0$ to the appendix.
\end{proof}

There are opposite rules for $\rb_i$, $i=1,\dots,n-1$, which are obtained using the relation with the antiautomorphism ${}^{\rm up}$ and read explicitly as:
\begin{itemize}\itemsep=0pt
\item if there exists a subset $I_\ell$ whose the smallest element is $i+1$ in the decreasing sequence $I_1>\dots>I_k$, then
$\rb_i(C_I)= C_{I_1\dots I_{\ell-1}, I_{\ell}\setminus \{i+1\},i, I_{\ell+1},\dots I_k}$;

\item if there exists a subset $I_\ell$ whose the biggest element is $i$ in the increasing sequence $I_1<\dots<I_k$, then
$\rb_i(C_I)= C_{I_1\dots I_{\ell-1}, I_{\ell}\setminus \{i\},i+1, I_{\ell+1},\dots I_k}$.
\end{itemize}
The rule for $\rb_0$ is that if the sequence is decreasing: $I_1 > I_2 >\dots> I_k$, and $1\in I_k$. We have
\begin{equation*}
 \rb_0( C_{I_1I_2\dots I_k} ) = C_{1, H_{k-1} \dots H_{1}H_0},
\end{equation*}
where $H_{k-1}<\dots < H_0$ is the sequence of connected subsets complementary to $(I_1,\dots,I_k)$ in~$\{1,\dots,n\}$.

\begin{exam}
Let $i\in\{1,\dots,n-1\}$. The rules above can be remembered as follow: $r_i$ can only transform the letter $i+1$ into $i$ when the sequence is increasing, and $i$ into $i+1$ when the sequence is decreasing (so that $r_i$ always moves a letter, either $i$ or $i+1$, to the left, keeping the sequence monotonic). For example,
\[
r_2(C_{34})=C_{24} ,\qquad r_2(C_{12})=r_2(C_{21})=C_{31} ,\qquad r_2(C_{35})=C_{25} ,\qquad r_2(C_{52})=C_{53} .
\]
Note that, for example, $r_2(C_{25})$ is not another $C_I$ (since $C_{25}\neq C_{52}$, the rule above cannot apply). Nevertheless, $r_2(C_{25})$ can be computed through the formula in Proposition~\ref{prop-ri} above.

We have a similar (but reversed) rule for $\rb_i$, so that for example,
\[
\rb_2(C_{34})=\rb_2(C_{43})=C_{42} ,\qquad \rb_2(C_{12})=C_{13} ,\qquad \rb_2(C_{53})=C_{52} ,\qquad \rb_2(C_{25})=C_{35} .
\]
Similarly, $\rb_2(C_{52})$ is not another $C_I$ but can be computed through Proposition~\ref{prop-ri} above.
\end{exam}

\subsection{Action of the braid group by automorphisms}

We are now ready to state and prove the main result of this section.
\begin{theo}\label{theo-matricesR}
The maps $r_a$, $\rb_a$ for $a=0,1,\dots,n-1$ are automorphisms of $\aw$ satisfying:
\begin{alignat*}{3}
&r_a\rb_a=\rb_ar_a={\rm Id} , && a=0,1,\dots,n-1, &\\
&r_ar_{a+1}r_a=r_{a+1}r_ar_{a+1} ,\qquad && a=0,1,\dots,n-2, &\\
&r_ar_b=r_br_a , && a,b=0,1,\dots,n-1\ \text{with}\ |a-b|>1 , &
\end{alignat*}
which are the defining relations of the braid group on $n+1$ strands.
\end{theo}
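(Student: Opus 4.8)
The strategy is to use that $\aw$ is presented by generators and relations: a map prescribed on the generators $C_I$ and extended multiplicatively extends to an algebra endomorphism exactly when it carries each defining relation to a relation valid in $\aw$. I would organise the argument into well-definedness, invertibility, the far-commutation relations, and the braid relations, checking everything on the generators.

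For well-definedness of $r_a$, I would verify that the images of the three families of defining relations \eqref{relcommv}, \eqref{relaw31v} and \eqref{relaw41v} hold in $\aw$. When the indices of a relation avoid $\{i,i+1\}$ (for $r_i$) or avoid $1$ (for $r_0$), the map acts as the identity by \eqref{def-rrbi}, \eqref{def-r0} and there is nothing to check. In the remaining cases I would substitute the explicit images given by Proposition~\ref{prop-ri}, and where an image is again a single generator use Proposition~\ref{prop-act-ri}. The key point is that the action formula \eqref{def-ri} has exactly the quadratic shape of the defining relation \eqref{def-CI2}, so that the image of a defining relation is again of this type and is found among the expanded relations already proved in Lemma~\ref{prop:def-nico}; the preservation of the commutation relations \eqref{relcommv} uses Lemma~\ref{lem:Commg} and Lemma~\ref{lemm:relcom2}. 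Well-definedness of $\rb_a$ is then automatic: from $\rb_a(C_I)=(r_a(C_I))^{\rm up}$ and \eqref{shifti} one obtains $\rb_a={\cdot}^{\rm up}\circ r_a\circ{\cdot}^{\rm up}$, and conjugating an endomorphism by the involutive anti-automorphism ${\cdot}^{\rm up}$ again yields an endomorphism.

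Next I would prove $r_a\rb_a=\rb_ar_a=\mathrm{Id}$. Since both maps are now endomorphisms, it suffices to evaluate on each generator $C_I$, which is a direct computation with the letter-moving rules of Proposition~\ref{prop-act-ri}: $r_a$ moves a single letter while keeping monotonicity, and $\rb_a$ moves it back. This establishes the first family of relations and, simultaneously, shows that each $r_a$ is bijective, hence an automorphism with inverse $\rb_a$. The far-commutation relations $r_ar_b=r_br_a$ for $|a-b|>1$ are likewise checked on generators: the sets $\{a,a+1\}$ and $\{b,b+1\}$ are disjoint and non-adjacent, so the two transformations act on separate parts of $I$; a short case analysis on $I\cap\{a,a+1\}$ and $I\cap\{b,b+1\}$ via Proposition~\ref{prop-ri} gives the claim, the only point requiring attention being $b\geq 2$ against $r_0$, where one uses that $r_b$ fixes $C_{2\dots n}$ and $C_{1\dots n}$.

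The main obstacle is the braid relation $r_ar_{a+1}r_a=r_{a+1}r_ar_{a+1}$, which I would again verify on generators, splitting into cases according to how the support of $I$ meets $\{a,a+1,a+2\}$ and whether the relevant subsequence is increasing or decreasing. In the generic cases all three maps act by the letter-moving rule of Proposition~\ref{prop-act-ri}, so both sides yield the same element $C_J$ and equality is a matter of tracking where the moved letters land. The delicate cases are those in which some intermediate image is not a single generator but a genuine $q$-commutator combination as in \eqref{def-ri}; there I would expand using the definition \eqref{def-CI2} together with the quadratic identities of Lemma~\ref{prop:def-nico}. The case $a=0$ is the hardest, because $r_0$ acts globally through $C_{2\dots n}$ and the wrap-around of the index set; I would relegate this computation, centred on the identity around \eqref{app-relKIJ}, to Appendix~\ref{sec-appendix}. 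Once these relations hold for the $r_a$, the three displayed families are precisely the defining relations of the braid group on $n+1$ strands, which would complete the proof.
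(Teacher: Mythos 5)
Your overall route is the same as the paper's: the substance of the theorem is the morphism property, which you attack by checking that the images of the defining relations \eqref{relcommv}, \eqref{relaw31v}, \eqref{relaw41v} hold in $\aw$; you reduce $\rb_a$ to $r_a$ via conjugation by the anti-automorphism ${\cdot}^{\rm up}$ exactly as in \eqref{shifti}; and you verify the inverse, far-commutation and braid relations on generators using the letter-moving rules of Proposition~\ref{prop-act-ri}. Two small remarks on the easy parts: for the braid relation $r_ar_{a+1}r_a=r_{a+1}r_ar_{a+1}$ applied to a generator $C_I$ with $I$ connected, every intermediate image is again a single element $C_{J_1\dots J_k}$ for a monotonic sequence (a connected $I$ meets $\{i,i+1\}$ in at most one point and is vacuously both increasing and decreasing as a one-term sequence), so the ``delicate cases'' you anticipate there do not in fact arise --- Proposition~\ref{prop-act-ri} suffices throughout, as in the paper's explicit tabulation.

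The one genuine underestimate is in the morphism check itself. When $r_i$ (or $r_0$) acts non-trivially on \emph{both} entries of the $q$-commutator in \eqref{relaw31v} or \eqref{relaw41v}, the image of the relation is \emph{not} simply ``found among the expanded relations already proved in Lemma~\ref{prop:def-nico}'': that lemma only covers sequences of four \emph{adjacent} subsets, whereas the images involve configurations with arbitrarily large holes. The paper needs a dedicated induction here --- on $\operatorname{dist}(I_1,I_4)$ for \eqref{relaw41v}, using the auxiliary equivalence of Proposition~\ref{prop-equiv} to shrink the hole, and on the size of $I_1$ or $I_3$ for \eqref{relaw31v}, using the chain of auxiliary identities \eqref{app-inter11}--\eqref{app-inter15} in which the induction hypothesis itself appears. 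Only the base cases land in Lemmas~\ref{prop:def-nico} and~\ref{lemm:relcom2}. Note also that you cannot shortcut this by invoking the non-adjacent versions of those relations from Proposition~\ref{prop:gen}, since that proposition is proved \emph{using} the fact that the $r_i$ are automorphisms; avoiding this circularity is precisely why the appendix sets up the inductions from scratch. With that reduction supplied, your argument matches the paper's.
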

\begin{proof}
The difficult part of the proof is that the maps $r_a$, $\rb_a$ are indeed morphisms. The details of this part are given in appendix. Once this is known, it remains only to verify the relations stated in the theorem for the maps $r_a$, $\rb_a$. Since they are morphisms, it is enough to check the relations on the generators of $\aw$.

The relations $r_a\rb_a=\rb_ar_a={\rm Id}$ is an immediate consequence of Proposition~\ref{prop-act-ri} (which was proved also for $r_0$ in the preceding step, see Appendix~\ref{sec-appendix}). Using this proposition, it is also straightforward to check that both sides of the 3-term braid relation gives the same result when applied on a generator. This is immediate on an element $C_I$ with $I$ of size $1$, and otherwise we have, for the non-trivial cases if $i\in\{1,\dots,n-2\}$:
\[r_ir_{i+1}r_i=r_{i+1}r_ir_{i+1}\colon \ \begin{cases}
C_{i+2\dots k} \mapsto C_{i,i+3\dots k} & \text{for }k\geq i+3 , \\
C_{i+1\dots k} \mapsto C_{i,i+1,i+3\dots k} & \text{for }k\geq i+2 , \\
C_{j\dots i+1} \mapsto C_{i+2,i+1,i-1,\dots,j} & \text{for }j\leq i ,\\
C_{j\dots i} \mapsto C_{i+2,i-1\dots j} & \text{for }j\leq i-1 .\\
\end{cases}\]
If $i=1$, the non-trivial cases are
\[r_0r_{1}r_0=r_{1}r_0r_{1}\colon \ \begin{cases}
C_{2\dots j} \mapsto C_{n \dots j+1, 21} & \text{for }j\geq 2 , \\
C_{1\dots j} \mapsto C_{n\dots j+1,2} & \text{for }j\geq 2 .
\end{cases}\]
Then let $a<b$ with $|a-b|>1$. Let $I$ be a connected subset. If $I$ is such that one of the maps~$r_a$ or $r_b$ (say $r_a$) leaves invariant $C_I$, then it is easy to see that $r_b(C_I)$ is still invariant by $r_a$, so that the relation $r_ar_b=r_br_a$ is verified. The only remaining case is when $I=\{a+1,\dots,b\}$. In this case, it is easy to check, using again Proposition~\ref{prop-act-ri}, that we have
\[r_a\rb_b=\rb_br_a\colon \ C_{a+1\dots b}\mapsto \begin{cases} C_{a,a+2\dots b-1,b+1} & \text{if $a>0$,}\\
C_{n\dots b+2, b, 1} & \text{if $a=0$.}
\end{cases}\]
Therefore, we have $r_a\rb_b=\rb_br_a$ which is equivalent to $r_ar_b=r_br_a$.
\end{proof}

\subsection{Quotient by the centre of the braid group}

At this point, we have an action of the braid group on $n+1$ strands by automorphisms on~$\aw$. This action is not faithful, which means that there are more relations satisfied by the maps~$r_a$,~$\rb_a$, not implied by the relations in Theorem~\ref{theo-matricesR}. To give additional relations, consider the following elements:
\begin{gather}\label{def-Delta}
\Delta_{a,\dots,b}=r_a\cdot r_{a+1}r_a\cdot \ldots \cdot r_{b}\ldots r_{a+1}r_a=r_ar_{a+1}\ldots r_{b}\cdot\ldots\cdot r_a r_{a+1}\cdot r_a \quad \text{for $a<b$,}
\end{gather}
where the equality between the two expressions is easily obtained using the braid relations. It is well known~\cite{KT} that $(\Delta_{a,\dots,b})^2$ generates
the centre of the braid group generated by $r_{a},\ldots, r_{b}$ as long as $b-a\geq 1$.

\begin{prop}\label{prop-Delta}
As automorphisms of $\aw$, the maps $r_0,r_1,\dots,r_{n-1}$ satisfy:
\[(\Delta_{0,\dots,n-1})^2=(\Delta_{0,\dots,n-2})^2=(\Delta_{1,\dots,n-1})^2={\rm Id} .\]
These relations can equivalently be written as:
\begin{gather}
(\Delta_{1,\dots,n-1})^2={\rm Id} , \qquad
r_{n-1}\dots r_1 r_0^2r_1\dots r_{n-1}={\rm Id}.\label{rel-quotient-braid}
\end{gather}
This allows to express $r_0^2$ in terms of the other automorphisms as:
\[
r_0^2=\rb_1\dots \rb_{n-2}\rb_{n-1}^2\rb_{n-2}\dots \rb_1=(\Delta_{2,\dots,n-1})^2 .
\]
\end{prop}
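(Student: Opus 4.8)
The overall strategy rests on Theorem~\ref{theo-matricesR}: since each $r_a$ is an automorphism of $\aw$, every relation in the statement is an equality of automorphisms, so it is enough to verify it on the algebra generators $C_I$ with $I$ connected. I would reduce the whole proposition to two genuinely computational inputs, after which all the listed identities follow by formal braid-group manipulations valid in $B_{n+1}$ (and hence in its image by automorphisms), using the standard Garside factorizations and the ``flip'' $\Delta_{0,\dots,n-1}r_a\Delta_{0,\dots,n-1}^{-1}=r_{n-1-a}$, cf.~\cite{KT}.

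The first input is $(\Delta_{1,\dots,n-1})^2=\mathrm{Id}$. Here I would show that $\Delta_{1,\dots,n-1}$ acts on the generators by the index reversal $\tau\colon C_{j\dots k}\mapsto C_{(n+1-k)\dots(n+1-j)}$, which is manifestly an involution on connected subsets, so that $(\Delta_{1,\dots,n-1})^2=\tau^2=\mathrm{Id}$. The formula for $\tau$ would be obtained by induction, feeding the one-step rules of Proposition~\ref{prop-act-ri} into the Garside factorization $\Delta_{1,\dots,n-1}=\Delta_{1,\dots,n-2}\,(r_{n-1}\cdots r_1)$. The point to check is that, although intermediate applications of the $r_i$ create holes (for instance $r_2(C_{12})=C_{31}$), these holes cancel along the half-twist word and the final output is again the connected, reversed generator.

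The second input is the ``sphere'' relation $r_{n-1}\cdots r_1\,r_0^2\,r_1\cdots r_{n-1}=\mathrm{Id}$; this is the part genuinely involving $r_0$ and I expect it to be the main obstacle. It has to be checked on each connected $C_{j\dots k}$ using the explicit $r_0$-formula of Proposition~\ref{prop-act-ri}, which mixes the central element $C_{1\dots n}$ into the computation, so the delicate point is once more the bookkeeping of the holes produced by $r_0$. Conceptually this relation expresses that the corresponding full rotation of the $(n+1)$-punctured sphere is trivial; already for $n=3$ one computes that $\Delta_{0,1,2}$ fixes $C_{12},C_{23}$ and interchanges $C_1\leftrightarrow C_2$ and $C_3\leftrightarrow C_{1\dots n}$, so that its square is the identity, consistently with the relation above.

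Granting these two inputs, the rest is pure braid algebra in $B_{n+1}$. From the two Garside factorizations $\Delta_{0,\dots,n-1}=\Delta_{0,\dots,n-2}(r_{n-1}\cdots r_0)=(r_{n-1}\cdots r_0)\Delta_{1,\dots,n-1}$ one gets $\Delta_{0,\dots,n-2}=(r_{n-1}\cdots r_0)\Delta_{1,\dots,n-1}(r_{n-1}\cdots r_0)^{-1}$, so $(\Delta_{0,\dots,n-2})^2$ is conjugate to $(\Delta_{1,\dots,n-1})^2=\mathrm{Id}$ and hence also equals $\mathrm{Id}$; combining the same factorizations with the flip gives $(\Delta_{0,\dots,n-1})^2=(\Delta_{0,\dots,n-2})^2\cdot r_{n-1}\cdots r_1 r_0^2 r_1\cdots r_{n-1}$, which together with the second input equals $\mathrm{Id}$. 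This proves the three-fold equality, and the same identities read in reverse give its equivalence with the two relations in~\eqref{rel-quotient-braid}. Finally, rewriting the sphere relation as $r_0^2=(r_{n-1}\cdots r_1)^{-1}(r_1\cdots r_{n-1})^{-1}=\rb_1\cdots\rb_{n-2}\rb_{n-1}^2\rb_{n-2}\cdots\rb_1$ with $\rb_a=r_a^{-1}$, and combining the one-level-down factorization $(\Delta_{1,\dots,n-1})^2=(r_1\cdots r_{n-1})(r_{n-1}\cdots r_1)(\Delta_{2,\dots,n-1})^2$ with $(\Delta_{1,\dots,n-1})^2=\mathrm{Id}$, identifies this same element with $(\Delta_{2,\dots,n-1})^2$, which is the displayed formula for $r_0^2$.
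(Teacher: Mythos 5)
Your reduction of the proposition to two computational inputs is sound, and all of the braid-theoretic manipulations you perform on top of them are correct: since Theorem~\ref{theo-matricesR} makes $r_0,\dots,r_{n-1}$ a representation of the braid group, the conjugacy $\Delta_{0,\dots,n-2}=(r_{n-1}\cdots r_0)\Delta_{1,\dots,n-1}(r_{n-1}\cdots r_0)^{-1}$, the full-twist telescoping $(\Delta_{0,\dots,n-1})^2=(\Delta_{0,\dots,n-2})^2\cdot r_{n-1}\cdots r_1r_0^2r_1\cdots r_{n-1}$, and the two rewritings of $r_0^2$ are all legitimate consequences. Your first input, the explicit index-reversal formula for $\Delta_{1,\dots,n-1}$, is exactly the computation the paper performs (via the intermediate formula for $r_1\cdots r_k(C_{i\dots j})$), so that part is fine.

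The genuine gap is your second input, the relation $r_{n-1}\cdots r_1r_0^2r_1\cdots r_{n-1}=\mathrm{Id}$, which carries all of the $r_0$-content of the proposition and which you only assert ``has to be checked,'' supported by an $n=3$ instance and a geometric heuristic about the punctured sphere (the latter proves nothing here, since $\aw$ is only known to surject onto the skein algebra). Worse, the verification route you propose --- iterating $r_0$ on generators --- runs into a concrete obstruction: after one application, $r_0$ produces elements such as $C_{k+1\dots n,1}$, which form a \emph{decreasing} sequence containing $1$, so the second application of $r_0$ is not covered by the substitution rule of Proposition~\ref{prop-act-ri} and must instead go through the general formula of Proposition~\ref{prop-ri}, which outputs a $q$-commutator rather than a generator; the ``bookkeeping of holes'' you defer is therefore a real difficulty, not a routine one. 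The paper avoids this entirely by computing the action of the single half-twist $\Delta_{0,\dots,n-1}=r_{n-1}\cdots r_1r_0\,\Delta_{1,\dots,n-1}$ on each generator --- which applies $r_0$ only once, always in a configuration where the substitution rule applies --- finding that $\Delta_{0,\dots,n-1}$ permutes the generators involutively ($C_{i\dots j}\mapsto C_{n-j\dots n-i}$ for $j\neq n$, $C_{i\dots n}\mapsto C_{n-i+1\dots n}$), and then \emph{deducing} your sphere relation from $(\Delta_{0,\dots,n-1})^2=(\Delta_{1,\dots,n-1})^2=\mathrm{Id}$ and the Garside factorisation. Replacing your second input by this computation would close the gap and would in fact simplify your argument, since $(\Delta_{0,\dots,n-2})^2=\mathrm{Id}$ then also follows by conjugation without invoking the telescoping identity.
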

\begin{proof}
Note that all these relations are satisfied on the central elements $C_1,\dots,C_n,C_{1,\dots,n}$ since restricted on this stable subset, the automorphisms $r_a$ are all involutions.

We first describe explicitly the action of $\Delta_{1,\dots,n-1}$ on a generator $C_I$. Let $I=\{i,\dots,j\}$ with~$i< j$. We have
\begin{equation*}
\Delta_{1,\dots,n-1}(C_{i\dots j})=C_{n-j+1,\dots,n-i+1} .
\end{equation*}
To prove this formula, we note first that
\[r_1\dots r_k (C_{i\dots j})=\begin{cases}
C_{i+1\dots j+1} & \text{if $j\leq k$,}\\
C_{1,i+1\dots j} & \text{if $i-1\leq k\leq j-1$,}\\
C_{i\dots j} & \text{if $k<i-1$.}
\end{cases}
\]
This is easily checked using Proposition~\ref{prop-act-ri}. Now, when applying $\Delta_{1,\dots,n-1}$ (we use the second formula for $\Delta_{1,\dots,n-1}$ in~\eqref{def-Delta}) on $C_{i\dots j}$, the element $C_{i\dots j}$ remains invariant until reaching the string $r_1\dots r_{i-1}$. Then (after acting with the strings $r_1\dots r_{i-1}$ to $r_1\dots r_{j-1}$) it becomes $C_{1\dots j-i}$, just before reaching the string $r_1\dots r_j$. The $n-j$ last strings (from $r_1\dots r_j$ to $r_1\dots r_{n-1}$) send it to $C_{n-j+1,\dots,n-i+1}$, which is the desired result.

At this point, this makes it clear that $(\Delta_{1,\dots,n-1})^2$ is the identity. We move on to calculating $\Delta_{0,\dots,n-1}$. We have, still using Proposition~\ref{prop-act-ri}:
\[\Delta_{0,\dots,n-1} (C_{i\dots j})=r_{n-1}\dots r_1r_0\Delta_{1,\dots,n-1} (C_{i\dots j})=\begin{cases}
C_{n-j\dots n-i} & \text{if $j\neq n$,}\\
C_{n-i+1\dots n} & \text{if $j=n$.}
\end{cases}
\]
Here also, this makes it clear that $(\Delta_{0,\dots,n-1})^2$ is the identity. At this point, relations~\eqref{rel-quotient-braid} are implied using:
\[\Delta_{0,\dots,n-1}=r_{n-1}\dots r_{1}r_{0} \Delta_{1,\dots,n-1}= \Delta_{1,\dots,n-1}r_0r_1\dots r_{n-1} .\]
Finally, we get that $\Delta_{0,\dots,n-2}$ squares to the identity from the already obtained relations and
\[\Delta_{0,\dots,n-2}=\rb_{n-1}\dots \rb_{1}\rb_{0} \Delta_{0,\dots,n-1}= \Delta_{0,\dots,n-1}\rb_0\rb_1\dots \rb_{n-1} .\]
Using that $\Delta_{1,\dots,n-1}=r_{n-1}\dots r_{1} \Delta_{2,\dots,n-1}= \Delta_{2,\dots,n-1}r_1\dots r_{n-1}$ and $(\Delta_{1,\dots,n-1})^2={\rm Id}$, the second equality in the expression from $r_0$ follows.
\end{proof}

\begin{rema}
Let $n=3$ and consider only the automorphisms $r_1$, $r_2$, $\rb_1$, $\rb_2$. The preceding proposition implies that these automorphisms give an action of the braid group on 3 strands,
which factors through the quotient by its centre, generated by $(r_1r_2r_1)^2$.
For $n=3$, this quotient of the braid group is also called the modular group and is isomorphic to ${\rm PSL}_2(\mathbb{Z})$. This automorphism group has been already found in~\cite{Ter} for the universal Askey--Wilson algebra,
and moreover in this case the action is faithful meaning that there is no more independent relations satisfied by the maps $r_1$, $r_2$.

For arbitrary $n\geq 3$, considering only the automorphisms $r_i$, $\rb_i$ with $i=1,\dots,n-1$, we have an action of the braid group on $n$ strands, which factors through its quotient by its centre,
thus generalising the result of~\cite{Ter} for $n=3$. We do not know if the action is faithful for this group.
\end{rema}

\begin{rema}
Even for $n=3$, we have a new automorphism $r_0$, which produces an action of the braid group on 4 strands on $\mathfrak{aw}(3)$.
This action also factors through the quotient by the centre, here generated by $(r_2r_1r_0r_2r_1r_2)^2$.
We have more relations since $r_0^2$ is expressed in terms of the other generators. For $n=3$, the formula for $r_0^2$ is simply $r_0^2=r_2^2$.

For any $n$, adding $r_0$, we have an action of the braid group on $n+1$ strands, which also factors through its quotient by the centre. We also have more relations since $r_0^2$ is expressed in terms of the other generators. Note that $r_0$ is never in the subgroup generated by $r_1,\dots,r_{n-1}$, as can be seen from the formula $r_0(C_1)=C_{1\dots n}$.
\end{rema}

\begin{rema}
It may be clear that we must also have an automorphism $r'_0$ which is similar to~$r_0$, but acts on the index $n$ instead of $1$. Indeed, we can define the following map:
\[r'_0\colon \
\begin{cases}
C_{\{j\dots k\}}\mapsto C_{\{j\dots k\}} ,& 1\leq j\leq k\leq n-1, \\
C_{\{j\dots n\}}\mapsto C_{\{1\dots j-1\},n} ,& 1\leq j\leq n.
\end{cases}\]
This extends to an automorphism and it satisfies $r'_0r_ir'_0=r_ir'_0r_i$ if $i=0$ or $i=n-1$, and commutes with the other $r_i$'s. So altogether, the maps $r_0,r_1,\dots,r_{n-1},r'_0$ generate the Artin braid group associated to the affine Dynkin diagram of type A. However, it is not so useful to consider this additional automorphism $r'_0$ since it can expressed in terms of the others. We have
\[r'_0=\rb_{n-1}\dots \rb_1\rb_0\rb_1\dots \rb_{n-1} .\]
This can easily be checked directly on the generators. Note that the braid relations involving $r'_0$ thus follows from the relations in Proposition~\ref{prop-Delta}.
\end{rema}

\subsection{Consequences of the automorphisms}

From the fact that $r_i$ and $\rb_i$ are automorphisms of $\aw$, we can deduce other relations between the elements of the Askey--Wilson algebra.
As stated in the following proposition, relations established in the previous lemmas can be generalised to the cases where the sets are not necessarily adjacent.
\begin{prop}\label{prop:gen}
 For any monotonic sequence of non-empty subsets $(I_1,I_2,I_3,I_4)$, the relations of Lemmas~$\ref{prop:def-nico}$,~$\ref{lem:relc1}$,~$\ref{lemm:relcom2}$ and~$\ref{lem:ff}$
 which contain only increasing $($resp. decreasing$)$ sequence also hold in $\aw$
\end{prop}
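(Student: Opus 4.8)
The plan is to deduce each non-adjacent relation from its already-established adjacent counterpart by transporting it with the automorphisms $r_i$ of Theorem~\ref{theo-matricesR}. The guiding observation is that, since each $r_i$ (and each $\rb_i$) is an algebra automorphism, it sends any valid identity among the elements $C_{I_1\dots I_k}$ to another valid identity, and being injective it also lets one pull identities back. Thus, for a fixed increasing monotonic sequence $(I_1,I_2,I_3,I_4)$ with possibly non-empty holes, I would produce a single automorphism $\psi$, built as a product of maps $r_i$, that ``contracts the holes'', i.e.\ carries $(I_1,I_2,I_3,I_4)$ onto the adjacent configuration $(\tilde I_1,\tilde I_2,\tilde I_3,\tilde I_4)$ obtained by sliding the sets leftwards until they touch, keeping the sizes $|\tilde I_a|=|I_a|$ and the increasing order. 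Because the relations in question are exactly those whose subscripts are all written in the same order, a single such $\psi$ will suffice to treat a whole relation; the decreasing case is entirely symmetric and can be obtained either by the mirror construction or by applying the anti-automorphism ${\cdot}^{\rm up}$.

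The construction of $\psi$ is explicit. Reading the union $I_1\cup H_1\cup\cdots\cup I_4$ from left to right, I would eliminate the empty positions one at a time: by Proposition~\ref{prop-act-ri}, when the ambient sequence is increasing and contains $i+1$ but not $i$, the map $r_i$ lowers the relevant letter by one, which slides a set one step to the left and fills the empty slot at position $i$ (a block of size larger than one being first split off as a singleton and subsequently recombined through a further $r_{i+1}$). Iterating over the successive empty positions produces a product $\psi=r_{i_p}\cdots r_{i_1}$ carrying $(I_1,\dots,I_4)$ to the adjacent sequence, with inverse the corresponding product of maps $\rb_i$; in particular $\psi$ is a genuine automorphism by Theorem~\ref{theo-matricesR}.

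Next I would verify, again by repeated use of Proposition~\ref{prop-act-ri}, that this \emph{same} $\psi$ sends every element occurring in the relation, namely each $C_{I_a}$, $C_{I_aI_b}$, $C_{I_aI_bI_c}$ and $C_{I_1I_2I_3I_4}$ with increasing subscripts, to the corresponding adjacent element $C_{\tilde I_a}$, $C_{\tilde I_a\tilde I_b}$, and so on. Granting this, $\psi$ maps the general relation term by term (products to products and $q$-commutators to $q$-commutators, by $\CC(q)$-linearity and multiplicativity) onto the adjacent relation of Lemmas~\ref{prop:def-nico},~\ref{lem:relc1},~\ref{lemm:relcom2} and~\ref{lem:ff}; since the latter holds and $\psi$ is injective, the general relation holds as well.

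The main obstacle is precisely this last verification: one must guarantee that throughout the contraction the ``simple rule'' branch of Proposition~\ref{prop-act-ri} applies \emph{simultaneously} to all elements of the relation, including those that skip a set, such as $C_{I_1I_3}$ or $C_{I_2I_4}$, and not merely to the single element the move was designed for. I expect this to follow from a bookkeeping induction: because the ambient sequence stays increasing and each $r_i$ is applied at an empty position $i$ with $i+1$ occupied, each monotonic subfamily of the sets either contains $i+1$ and not $i$, in which case Proposition~\ref{prop-act-ri} fires in its simple form, or contains neither of them, in which case $r_i$ acts trivially on the corresponding element --- and that trivial action is exactly what is needed, since the shift undergone by a given set is intrinsically fixed by the number of empty positions to its left. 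The argument is routine but needs genuine care when a set has size larger than one, where the transient split-and-recombine step must be tracked.
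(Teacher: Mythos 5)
Your proposal is correct and is essentially the paper's own argument: both proofs transport each relation between the adjacent configuration and the general one by a product of the braid automorphisms $r_i$, using the explicit letter-shifting rule of Proposition~\ref{prop-act-ri} and the observation that, since the position $i$ acted on is globally unoccupied, every element $C_J$ occurring in an increasing-only relation is relabelled consistently (or left invariant) at each step. The only, immaterial, difference is the direction of transport — you contract the holes onto an adjacent configuration anchored at $I_1$ and invoke injectivity, whereas the paper starts from an adjacent configuration squeezed against $I_4$ and creates the holes by applying the $r_i$'s directly.
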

\begin{proof}
Let us prove the proposition for an increasing sequence of subsets $I_\ell=\{i_\ell,i_\ell+1,\dots ,j_\ell\}$ with $i_\ell\leq j_\ell< i_{\ell+1}$.
We introduce the subsets $\overline{I_3}=\{i_3+i_4-j_3-1, \dots ,i_4-1\}$, $\overline{I_2}=\{i_2+i_3+i_4-j_2-j_3-2,\dots, i_3+i_4-j_3-2\}$ and $\overline{I_1}=\{i_1+i_2+i_3+i_4-j_1-j_2-j_3-3,\dots ,i_2+i_3+i_4-j_2-j_3-3\}$ such that
$(\overline{I_1},\overline{I_2},\overline{I_3},I_4)$ is a sequence of adjacent subsets and $\# \overline{I_\ell}=\# I_\ell$.
Then, the relations of the lemmas hold for $(\overline{I_1},\overline{I_2},\overline{I_3},I_4)$.
We consider the ones which contain only increasing sequences (for example, second relation of~\eqref{rel:com1} is excluded).
Acting with the automorphisms $r_{i_1+i_2+i_3+i_4-j_1-j_2-j_3-4}$, then $r_{i_1+i_2+i_3+i_4-j_1-j_2-j_3-5}$ up to $r_{i_1}$ we bring the smallest index of $\overline{I_1}$ to $i_1$ (which is the smallest index of $I_1$). Iterating the process, we change the sequence $(\overline{I_1},\overline{I_2},\overline{I_3},I_4)$ to $(I_1,I_2,I_3,I_4)$, so that these relations hold also
for this sequence. The remaining relations are established
 using the just proved ones, and following the proofs of the lemmas.
\end{proof}

From now one, we use the relations of the lemmas in the general setting of Proposition~\ref{prop:gen} without mentioning it.

\section[Coproduct maps on aw(n)]{Coproduct maps on $\boldsymbol{\aw}$}\label{sec-coproduct}

Let $a\in\{0,1,\dots,n\}$ and $1\leq j\leq k\leq n$. We consider the following map:
\begin{equation}\label{eq:delta}
\delta_a\colon \
C_{j\dots k} \mapsto
\begin{cases}
C_{j\dots k}&\mbox{when } k<a ,\\
 C_{j\dots k\,k+1}&\mbox{when } j\leq a\leq k ,\\
 C_{j+1\dots k+1}&\mbox{when } a<j .
 \end{cases}
\end{equation}
Note that $\delta_0$ corresponds simply to increase the indices by one. We call the maps $\delta_a$ coproduct maps. The terminology will be clear when considering the tensor products ${\rm U}_q(\mathfrak{sl}_2)^{\otimes n}$.
\begin{prop}
 Let $a\in\{0,1,\dots,n\}$. The map $\delta_a$ defines a morphism of algebras from $\aw$ to $\mathfrak{aw}(n+1)$.
\end{prop}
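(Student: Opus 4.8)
The plan is to regard $\delta_a$ first as the algebra homomorphism from the free algebra on the generators $\{C_I\}$ of $\aw$ into $\mathfrak{aw}(n+1)$ sending each generator $C_{j\dots k}$ to the element of $\mathfrak{aw}(n+1)$ prescribed by \eqref{eq:delta}, and then to show that the three families of defining relations of Definition~\ref{prop:def-nico2} lie in its kernel. Since \eqref{relcommv}, \eqref{relaw31v} and \eqref{relaw41v} become, after expanding the hole-elements through \eqref{def-CI2}, polynomial identities among generators, it suffices to check that $\delta_a$ carries each of them to a genuine identity of $\mathfrak{aw}(n+1)$. The whole argument rests on a purely combinatorial fact: at the level of index sets, $\delta_a$ is compatible with all the structure (connectedness, order, adjacency, holes, disjointness, nesting) entering the defining relations.

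First I would record this combinatorial compatibility. Writing a connected subset as an integer interval $[j,k]$, formula \eqref{eq:delta} says that $\delta_a([j,k])$ is again an interval, whose left endpoint is $j$ increased by one exactly when $j>a$ and whose right endpoint is $k$ increased by one exactly when $k\ge a$. In particular $\delta_a$ sends connected subsets of $\{1,\dots,n\}$ to connected subsets of $\{1,\dots,n+1\}$ and is strictly order-preserving on endpoints. A short case analysis on the position of $a$ relative to two intervals then shows that $\delta_a$ preserves the relation $I<J$, sends disjoint subsets to disjoint subsets and nested subsets to nested subsets, satisfies $\delta_a(I\cup J)=\delta_a(I)\cup\delta_a(J)$ whenever $I$ and $J$ are adjacent, and maps the hole between two disjoint connected subsets onto the hole between their images. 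Consequently $\delta_a$ turns a monotonic sequence of adjacent connected subsets, or of connected subsets separated by holes, into a monotonic sequence of the same kind.

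Next I would upgrade this to the hole-elements. Using the recursive definition \eqref{def-CIgen} and induction on the number of holes, the previous compatibilities give, at the level of the free algebra, $\delta_a(C_{I_1\dots I_m})=C_{\delta_a(I_1)\dots\delta_a(I_m)}$, the right-hand side being the corresponding element of $\mathfrak{aw}(n+1)$: indeed $\delta_a$ distributes over the $q$-commutator and the products appearing in \eqref{def-CIgen}, while the base case (a single subset, together with adjacent unions such as $C_{I_1H}$ and $C_{HI_2}$) is immediate from the definition of $\delta_a$ on generators combined with $\delta_a(I\cup J)=\delta_a(I)\cup\delta_a(J)$.

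With this dictionary the three families of relations are handled at once. For \eqref{relcommv}, if $I\cap J=\varnothing$ or $I\subset J$ then the same holds for $\delta_a(I)$ and $\delta_a(J)$, so $[C_{\delta_a(I)},C_{\delta_a(J)}]=0$ already in $\mathfrak{aw}(n+1)$ by \eqref{relcommv}. For \eqref{relaw31v} and \eqref{relaw41v}, a monotonic sequence of adjacent subsets $(I_1,I_2,I_3)$ (resp.\ $(I_1,\dots,I_4)$) is sent to a monotonic sequence of adjacent subsets of $\{1,\dots,n+1\}$, and by the hole dictionary each term of the relation is sent to the corresponding term built from the images; thus $\delta_a$ carries \eqref{relaw31v} (resp.\ \eqref{relaw41v}) to the instance of \eqref{relaw31v} (resp.\ \eqref{relaw41v}) attached to the image sequence, which holds in $\mathfrak{aw}(n+1)$ by Definition~\ref{prop:def-nico2} applied with $n+1$. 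Hence all defining relations are in the kernel and $\delta_a$ descends to an algebra morphism $\aw\to\mathfrak{aw}(n+1)$. The only genuine work, and the place where care is needed, is the boundary bookkeeping in the structural step: when the doubled node $a+1$ is created exactly at the junction between a subset and an adjacent hole, one must check that it attaches to the correct side so that adjacency and the hole structure are faithfully transported; each such case is a routine finite verification.
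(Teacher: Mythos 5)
Your proposal is correct and follows essentially the same route as the paper, whose proof is the one-line observation that the defining relations only involve combinatorial data (connectedness, adjacency, disjointness, nesting, holes) that $\delta_a$ visibly preserves; you have simply carried out that check in detail, including the compatibility $\delta_a(C_{I_1\dots I_m})=C_{\delta_a(I_1)\dots\delta_a(I_m)}$ for the hole elements. No gap.
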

\begin{proof}
Due to the form of the defining relations of $\aw$ involving connected subsets of $\{1,\dots,n\}$, it is immediate to check that the maps $\delta_a$ preserve them.
\end{proof}

\begin{exam}
One gets from the definition: $\delta_2(C_1)=C_1$, $\delta_2(C_{12})=C_{123}$, $\delta_2(C_{3})=C_{4}$.
\end{exam}

\subsection{Relations between the coproduct and the braid group action}

Below we give the relations between the coproduct maps and the automorphisms forming the action of the braid group. They actually reflect the quasi-triangularity of ${\rm U}_q(\mathfrak{sl}_2)$ when the algebra $\aw$ is realised
in~${\rm U}_q(\mathfrak{sl}_2)^{\otimes n}$, see a later section. However, the realisation of $\aw$ in ${\rm U}_q(\mathfrak{sl}_2)^{\otimes n}$ is not faithful, and thus it is remarkable that the relations below are already satisfied in $\aw$
before taking the quotient corresponding to its realisation in ${\rm U}_q(\mathfrak{sl}_2)^{\otimes n}$.
\begin{prop}\label{prop-r-delta}
We have the following identities for morphisms from $\aw$ to $\mathfrak{aw}(n+1)$:
\begin{alignat*}{4}
&r_i\delta_i=\delta_i; &&\quad \delta_i=\rb_i\delta_i && \quad\text{for $i=0,1,\dots,n$,} &\\
&\delta_ir_i=r_{i+1}r_i\delta_{i+1}; &&\quad \rb_i\rb_{i+1}\delta_i=\delta_{i+1}\rb_i \quad && \quad\text{for $i=0,1,\dots,n-1$,} &\\
&\delta_{i+1}r_i=r_{i}r_{i+1}\delta_{i}; &&\quad \rb_{i+1}\rb_i\delta_{i+1}=\delta_{i}\rb_i && \quad\text{for $i=0,1,\dots,n-1$,} &\\
&\delta_ir_j=r_j\delta_i; &&\quad \delta_i\rb_j=\rb_j\delta_i && \quad\text{for $i=0,1,\dots,n$ and $j<i-1$,} &\\
&\delta_ir_j=r_{j+1}\delta_i; &&\quad \delta_i\rb_j=\rb_{j+1}\delta_i && \quad\text{for $i=0,1,\dots,n$ and $j>i$.} &
\end{alignat*}
\end{prop}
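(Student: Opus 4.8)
The plan is to exploit that every map occurring in the statement is an algebra morphism from $\aw$ to $\mathfrak{aw}(n+1)$: the coproduct $\delta_a$ by the preceding proposition, and the maps $r_a,\rb_a$ by Theorem~\ref{theo-matricesR} (as automorphisms of $\aw$ or of $\mathfrak{aw}(n+1)$, according to the side on which they appear). Hence both members of each identity are morphisms $\aw\to\mathfrak{aw}(n+1)$, and it suffices to verify each equality on the generators $C_{p\dots q}$ of $\aw$.

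Before the case analysis, I would first halve the number of identities to check. The key remark is that $\delta_a$ commutes with the anti-automorphism ${\cdot}^{\rm up}$ of \eqref{eq:up}: since ${\cdot}^{\rm up}$ is the identity on generators in both $\aw$ and $\mathfrak{aw}(n+1)$ and $\delta_a$ sends generators to generators, the two anti-morphisms $\delta_a\circ({\cdot}^{\rm up})$ and $({\cdot}^{\rm up})\circ\delta_a$ agree on generators, hence everywhere, giving $\delta_a(X^{\rm up})=(\delta_a(X))^{\rm up}$ for all $X$. Combined with \eqref{shifti}, namely $\rb_a(Y)=(r_a(Y^{\rm up}))^{\rm up}$, a short computation rewrites each $\rb$-identity of the right column as the ${\cdot}^{\rm up}$-conjugate of an $r$-identity of the left column; for instance $\rb_i\delta_i=\delta_i$ follows from $r_i\delta_i=\delta_i$, while $\rb_i\rb_{i+1}\delta_i=\delta_{i+1}\rb_i$ is exactly the conjugate of $\delta_{i+1}r_i=r_ir_{i+1}\delta_i$, and likewise for the shift relations. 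Thus it is enough to prove the five identities in the left column.

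For these I would evaluate each identity on a generator $C_{p\dots q}$, distinguishing cases according to the position of the interval $\{p,\dots,q\}$ relative to $i$ (and to $i+1$), using the explicit formula \eqref{eq:delta} for $\delta$ together with Definition~\ref{def:rrb} and Propositions~\ref{prop-ri} and~\ref{prop-act-ri} for the braid maps. The first identity $r_i\delta_i=\delta_i$ is immediate: in every case $\delta_i(C_{p\dots q})$ is a generator whose underlying set either contains both $i$ and $i+1$ or is disjoint from $\{i,i+1\}$, so $r_i$ fixes it by \eqref{def-rrbi}. The commutation and shift identities (for $j<i-1$ and for $j>i$) are equally direct: the indices $j,j+1$ moved by $r_j$ lie strictly on one side of the index $i$ where $\delta_i$ acts, so the two operations touch disjoint parts of the set and one only has to track the relabeling, applying Proposition~\ref{prop-ri} term by term on the (at most three) terms produced when $r_j$ fails to return a single generator.

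The main obstacle, and the only genuinely computational part, is the pair of boundary identities $\delta_ir_i=r_{i+1}r_i\delta_{i+1}$ and $\delta_{i+1}r_i=r_ir_{i+1}\delta_i$, where the strand duplicated by the coproduct sits exactly at the transposition point of the braid generator. Here $r_i$ applied to a generator meeting $\{i,i+1\}$ in a single element does not return a generator but the two- or three-term expression of Proposition~\ref{prop-ri}; applying $\delta$ to it through the recursive definitions \eqref{def-CI2} and~\eqref{def-CIgen}, and comparing with the image computed on the other side via Proposition~\ref{prop-act-ri}, requires careful bookkeeping. Concretely, for the set $\{i+1,\dots,q\}$ one finds that both sides equal the one-hole element $C_{i,i+1,i+3\dots q+1}$: the left side because $\delta_i(C_{i,i+2\dots q})$ unpacks via \eqref{def-CI2} into $-[C_{i,i+1,i+2},C_{i+2\dots q+1}]_q+C_{i,i+1}C_{i+3\dots q+1}+C_{i+2}C_{i\dots q+1}$, which is precisely $C_{i,i+1,i+3\dots q+1}$, and the right side because $r_{i+1}r_i$ successively moves the letters $i+1$ and then $i+2$ according to Proposition~\ref{prop-act-ri}. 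The remaining configurations (set lying below, above, or straddling the pair $\{i,i+1\}$) follow the same scheme and are comparatively routine, and assembling all cases completes the verification.
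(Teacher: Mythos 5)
Your proposal is correct and follows the same route as the paper, whose entire proof is the observation that all maps involved are morphisms so the identities need only be checked on the generators $C_{j\dots k}$, each verification being straightforward. Your additional reduction of the $\rb$-identities to the $r$-identities via the anti-automorphism ${\cdot}^{\rm up}$ and your worked boundary case $\delta_i r_i(C_{i+1\dots q})=r_{i+1}r_i\delta_{i+1}(C_{i+1\dots q})=C_{i,i+1,i+3\dots q+1}$ are both accurate and merely make explicit what the paper leaves to the reader.
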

\begin{proof}
All the equalities need only to be checked on the generators $C_I$ of $\aw$, with $I$ a~connected subset of $\{1,\dots,n\}$. All these verifications are straightforward.
\end{proof}

\subsection[On the definition of aw(n)]{On the definition of $\boldsymbol{\aw}$}\label{sect:aw3-n}
We give a more conceptual equivalent definition of the algebra $\aw$, which puts forward the role of the coproduct maps $\delta_i$ and of the automorphisms $r_i$. In the following discussion, we can and we will ignore the index $0$ for the coproduct maps and the automorphisms.

For any $n\geq 1$, we consider the algebra $\aw$ as generated by elements $C_I$ with $I$ any connected subset of $\{1,\dots,n\}$. Let us introduce the natural map:
\begin{align*} \iota_n\colon \ \aw & \to \mathfrak{aw}(n+1),\\
C_I & \mapsto C_I \quad \text{($I$ a connected subset of $\{1,\dots,n\}$),}
\end{align*}
and recall that we have the coproduct maps $\delta_1,\dots,\delta_n$ defined in~\eqref{eq:delta}. We make the following requirements:
\begin{itemize}\itemsep=0pt
\item[1.] For $n\geq 1$, the map $\iota_n$ and the coproduct maps are morphisms from $\mathfrak{aw}(n)$ to $\mathfrak{aw}(n+1)$.
\item[2.] For $n\geq 1$, there exist automorphisms $r_1,\dots,r_{n-1}$ of $\aw$, commuting with $\iota_n$, and satisfying the conditions in Proposition~\ref{prop-r-delta} (where $\rb_1,\dots,\rb_{n-1}$ denote the inverses).
\item[3.] For $n=2$, $\mathfrak{aw}(2)$ is commutative, and the automorphism $r_1$ exchanges $C_1$ and $C_2$.
\item[4.] Finally, for $n=3$, we impose the following formulas for the automorphism $r_1$:
\begin{gather}\label{axiomAW3}
r_1(C_{23})=-[C_{12},C_{23}]_q+C_1C_3+C_2C_{123}, \nonumber\\
\rb_1(C_{23})=-[C_{23},C_{12}]_q+C_1C_3+C_2C_{123} ,
\end{gather}
that is, with our notations, $r_1(C_{23})=C_{13}$ and $\rb_1(C_{23})=C_{31}$.
\end{itemize}
Note that the condition of commuting with $\iota_n$ in the second item simply means that if $r_i(C_I)$ is calculated in $\aw$, the same result will be true in $\mathfrak{aw}(n')$ for any $n'>n$.

Now let us discuss the meaning of the above requirements step by step:
\begin{itemize}\itemsep=0pt
\item When considering Hopf algebras, the first two items encode the quasitriangularity. Namely, let $A$ be a quasitriangular Hopf algebra and $C$ be any element of $A$. We can define elements $C_I$ in $A^{\otimes n}$ obtained by repeated applications of the coproduct map of $A$ on $C$. Then the first two items will be automatically satisfied if we take the subalgebra in $A^{\otimes n}$ generated by these elements $C_I$. The maps $\delta_i$ are realised by
\begin{equation*}
\Delta_i\colon \ A^{\otimes n} \to A^{\otimes (n+1)},
\end{equation*}
which is the coproduct $\Delta$ applied in the $i^{\rm th}$ copy of $A$. The automorphisms $r_i$, $i=1,\dots,n-1$, are realised by
\begin{align*}
\rho_i\colon \ A^{\otimes n} &\to A^{\otimes n},\\
 X & \mapsto \tau_{i,i+1}\big( R_{i,i+1} X R_{i,i+1}^{-1}\big),
\end{align*}
where $R$ is the $R$-matrix and $\tau_{i,i+1}$ is the flip operation between the $i^{\rm th}$ and $(i+1)^{\rm th}$ copies of $A$. Then the relations in Proposition~\ref{prop-r-delta} becomes direct translations of the quasitriangularity of $A$.

\item Now the third item will be automatically satisfied if we take for $C$ a central element of $A$ (such as for example the Casimir element of a quantum group ${\rm U}_q(\mathfrak{g})$). This is immediate to check.

\item The final requirement is the only one which is very specific. It comes from a relation which was shown to be satisfied when $C$ is the Casimir element of ${\rm U}_q(\mathfrak{sl}_2)$~\cite{CGVZ}.
\end{itemize}

The first key point about these requirements is that they completely fix the action of the automorphisms $r_1,\dots,r_{n-1}$ and their inverses on the generators. This is straightforward to check, recursively on $n$, and we will only show how this works for $n=3$, leaving the remaining details for the reader. First, we apply the relations
\[
r_1\delta_1=\delta_1\qquad \text{and}\qquad r_2\delta_2=\delta_2 ,
\]
on all generators $C_1$, $C_2$, $C_{12}$. We get that $r_1$ leaves invariant $C_{3}$, $C_{12}$, $C_{123}$ and that $r_2$ leaves invariant $C_1$, $C_{23}$, $C_{123}$. Since we already know that $r_1$ exchanges $C_1$ and $C_2$ and the action of~$r_1$,~$\rb_1$ on $C_{23}$ (the last axiom), we are done for $r_1$ and its inverse. Then, we use the relations
\[
\delta_1r_1=r_2r_1\delta_2\qquad \text{and}\qquad \delta_2 r_1=r_1r_2\delta_1 .
\]
This gives the remaining actions of $r_2$ and its inverse, namely that $r_2$ exchanges $C_2$ and $C_3$ and~$r_2(C_{12})=C_{31}$ and $\rb_2(C_{12})=C_{13}$.

Then, the second implication of these requirements is that all relations appearing in the definition of $\aw$ are necessary. For $n=3$, the commuting relations are easy to obtain from the commutativity of $\mathfrak{aw}(2)$ by applying the coproduct maps and some suitable automorphisms. Then, using the known actions of the automorphisms, we apply $r_1r_2$ on the first relation in~\eqref{axiomAW3} and $\rb_2$ on the second relation in~\eqref{axiomAW3}, and we recover the usual defining relations of $\mathfrak{aw}(3)$ as discussed in Example~\ref{ex:aw3}. For $n>3$, it is straightforward to get all defining relations from those for $n=3$ by repeated applications of coproduct maps and automorphisms.

Since we know from the results in the preceding sections that our definition is also sufficient to satisfy the above requirements, we conclude with the following conceptual view on the algebras~$\aw$.
\begin{prop}
The defining relations in Definition~$\ref{prop:def-nico2}$ of the algebras $\aw$ are necessary and sufficient to satisfy the above requirements $(1)$ to $(4)$. In other words, the algebras $\aw$ are the largest algebras satisfying these requirements, and for any other sequence of algebras $A_n$ satisfying these requirements, we have that $A_n$ is a quotient of $\aw$.
\end{prop}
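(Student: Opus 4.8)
The plan is to prove the two halves of the statement separately: \emph{sufficiency}, that the algebras $\aw$ of Definition~\ref{prop:def-nico2} do satisfy requirements (1)--(4), and \emph{necessity/universality}, that any sequence $A_n$ satisfying (1)--(4) is a quotient of $\aw$. The first half is a matter of collecting results already established. The map $\iota_n$ is a morphism because the defining relations of $\aw$ form a subset of those of $\mathfrak{aw}(n+1)$ (they are indexed by connected subsets of $\{1,\dots,n\}\subseteq\{1,\dots,n+1\}$), so $C_I\mapsto C_I$ respects them; the coproduct maps $\delta_a$ are morphisms by the proposition of Section~\ref{sec-coproduct}; the automorphisms $r_1,\dots,r_{n-1}$ exist by Theorem~\ref{theo-matricesR}, commute with $\iota_n$ because the formulas of Proposition~\ref{prop-ri} for $r_i(C_I)$ involve only subsets of $\{1,\dots,n\}$ and are insensitive to the ambient rank, and satisfy the compatibilities of Proposition~\ref{prop-r-delta}. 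Requirement (3) holds since $\mathfrak{aw}(2)$ is commutative by~\eqref{relcommv} with $r_1$ exchanging $C_1,C_2$, and requirement (4) is exactly the content of $r_1(C_{23})=C_{13}$ and $\rb_1(C_{23})=C_{31}$ read off from Definition~\ref{def:rrb} together with~\eqref{def-CI2}.

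For the universality, let $A_n$ be any sequence of algebras satisfying (1)--(4), with distinguished generators still denoted $C_I$. I would define $\phi_n\colon\aw\to A_n$ on generators by $C_I\mapsto C_I$ and show it is a well-defined surjective morphism; since the $C_I$ generate $A_n$, surjectivity is automatic, and $A_n\cong\aw/\ker\phi_n$ is then a quotient. The whole point is therefore to check that every defining relation of Definition~\ref{prop:def-nico2} already holds among the $C_I$ in $A_n$. As explained before the statement, this proceeds in two steps carried out by recursion on $n$. First, requirements (1)--(4) \emph{determine} the action of $r_i$ and $\rb_i$ on all generators of $A_n$: the base cases $n\le 3$ are fixed directly by (3) and (4), lifted through $\iota_n$ and combined with the relations $r_i\delta_i=\delta_i$, $\delta_i r_i=r_{i+1}r_i\delta_{i+1}$ and $\delta_i r_j=r_{j+1}\delta_i$ of Proposition~\ref{prop-r-delta}; the inductive step propagates this to $A_{n+1}$ using the same relations, and the forced action necessarily coincides with the explicit formulas~\eqref{def-ri}. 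Second, with this action known, the relations are forced: the commuting relations~\eqref{relcommv} descend from the commutativity of $A_2$ by applying coproduct maps and conjugating by suitable automorphisms, while the Askey--Wilson relations~\eqref{relaw31v} and~\eqref{relaw41v} are obtained for $n=3$ by applying $r_1r_2$ to the first line and $\rb_2$ to the second line of~\eqref{axiomAW3}, recovering the relations of Example~\ref{ex:aw3}, and for $n>3$ by further applying $\delta_a$ and automorphisms to reach arbitrary adjacent sequences $(I_1,I_2,I_3)$ and $(I_1,I_2,I_3,I_4)$.

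Finally, I would note that sufficiency is precisely what guarantees \emph{maximality}: since $\aw$ itself satisfies (1)--(4), these requirements cannot force any relation that fails in $\aw$, so $\aw$ is indeed the largest algebra with these properties and $\ker\phi_n$ may be nontrivial for a smaller $A_n$. The main obstacle is the bookkeeping in the universality step: one must verify that the recursive propagation via $\delta_a$ and the $r_i,\rb_i$ genuinely reaches \emph{every} relation of Definition~\ref{prop:def-nico2}, including those involving holes through~\eqref{def-CI2} and every position of the moving index, and that the determination of the automorphism action in the first step is consistent at each stage of the recursion. These verifications are routine on generators given the explicit formulas of Propositions~\ref{prop-ri} and~\ref{prop-act-ri}, but they are numerous; the conceptual content is entirely concentrated in the two base cases $A_2$ and $A_3$, everything else being transport by the structural maps.
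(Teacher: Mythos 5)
Your proposal is correct and follows essentially the same route as the paper: sufficiency by collecting the results of Theorem~\ref{theo-matricesR} and Propositions on $\delta_a$ and~\ref{prop-r-delta}, and universality by showing recursively that the requirements force the action of the $r_i$, $\rb_i$ on generators and then recover all defining relations from the $n=2,3$ base cases (applying $r_1r_2$ and $\rb_2$ to~\eqref{axiomAW3}) via coproduct maps and automorphisms. The only difference is presentational: you make the quotient map $\phi_n\colon\aw\to A_n$ and the surjectivity argument explicit, which the paper leaves implicit.
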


\section{Casimir elements}\label{sec-Cas}
In this section, we use the known central element of $\mathfrak{aw}(3)$ and the machinery of coproducts and braid group automorphisms to produce a family of central elements of $\aw$ for any $n$.

\medskip
Let $(I_1,I_2,I_3)$ be a sequence of subsets such that $I_1<I_2<I_3$ (not necessarily adjacent) where each $I_j$ is seen as an increasing sequence of connected subsets. We define the following element of $\aw$
\begin{gather}
 \varOmega_{I_1,I_2,I_3} = qC_{I_1I_2}C_{I_2I_3}C_{I_1I_3}+\frac{q^{2}C_{I_1I_2}^{2}+q^{-2}C_{I_2I_3}^{2}+q^{2}C_{I_1I_3}^{\,2}+C_{I_1I_2I_3}^{2} +
 C_{I_1}^{2}+C_{I_2}^{2}+C_{I_3}^{2} }{q+q^{-1}}\nonumber \\
\hphantom{\varOmega_{I_1,I_2,I_3} =}{}-qC_{I_1I_2}(C_{I_1}C_{I_2}+C_{I_3}C_{I_1I_2I_3}) -q^{-1}C_{I_2I_3}(C_{I_2}C_{I_3}+C_{I_1}C_{I_1I_2I_3}) \nonumber \\
\hphantom{\varOmega_{I_1,I_2,I_3} =}{}
-qC_{I_1I_3}(C_{I_1}C_{I_3}+C_{I_2}C_{I_1I_2I_3})+(q+q^{-1})C_{I_1}C_{I_2}C_{I_3}C_{I_1I_2I_3}
-\frac{1}{q+q^{-1}} ,\label{eq:aw3cas}
\end{gather}
called Casimir elements. This name comes from the fact that these elements are central in $\aw$ as shown below.
The last term in~\eqref{eq:aw3cas} ensures that $\varOmega_{I_1,I_2,I_3}=0$ whenever at least one of the subsets $I_1$, $I_2$ or $I_3$ is empty.

Note that the coproduct maps are easy to apply on the elements $\varOmega_{I_1,I_2,I_3}$. We have immediately:
\begin{equation}\label{eq:deltaOmega}
\delta_i(\varOmega_{I_1,I_2,I_3})=\varOmega_{I'_1,I'_2,I'_3} ,
\end{equation}
where $I'_a$ ($a=1,2,3$) is obtained from $I_a$ by increasing by 1 all elements strictly greater than $i$, and furthermore adding $i+1$ next to $i$ if $i\in I_a$.

\subsection[The central element of aw(3)]{The central element of $\boldsymbol{\mathfrak{aw}(3)}$}

In $\mathfrak{aw}(3)$, there is only one Casimir element denoted $\varOmega_{1,2,3}$.
It was first introduced in~\cite{Zh2}, see also~\cite{avatar, Zh}. We record its main properties in the following proposition.

\begin{prop}\label{propcas3}
The element $\varOmega_{1,2,3}$ is central in $\mathfrak{aw}(3)$ and is invariant by $r_1$, $\rb_1$, $r_2$, $\rb_2$.
\end{prop}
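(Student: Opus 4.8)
The plan is to treat the two assertions separately, reducing each to a finite verification on the non-central generators and keeping the noncommutative reordering under control through one workhorse identity.

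For centrality, by \eqref{relcommv} the elements $C_1,C_2,C_3,C_{123}$ are already central, so it suffices to show that $\varOmega_{1,2,3}$ commutes with the two remaining generators $C_{12}$ and $C_{23}$. To lighten the notation I would write $A=C_{12}$, $B=C_{23}$, $C=C_{13}$ and introduce the central combinations $\alpha=C_1C_2+C_3C_{123}$, $\beta=C_2C_3+C_1C_{123}$, $\gamma=C_1C_3+C_2C_{123}$, so that \eqref{eq:aw31}, \eqref{eq:aw32}, \eqref{eq:aw33b} read $C=-[A,B]_q+\gamma$, $A=-[B,C]_q+\alpha$, $B=-[C,A]_q+\beta$; in particular $[A,B]_q=\gamma-C$ and its two analogues are the identities I would use repeatedly. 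Every monomial of $\varOmega_{1,2,3}$ built solely from $C_1,C_2,C_3,C_{123}$ is central and drops out of any commutator, leaving only the core $qABC+\frac{q^2A^2+q^{-2}B^2+q^2C^2}{q+q^{-1}}-qA\alpha-q^{-1}B\beta-qC\gamma$ to test. I would compute $[\,\cdot\,,A]$ of this core by pushing $A$ through each product ($[BC,A]$ via $[B,A]$ and $[C,A]$, each rewritten with one of the three relations above, and likewise $[B^2,A]$ and $[C^2,A]$) and check that all contributions cancel.

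The commutator with $B$ need not be redone from scratch. The assignment $C_{12}\leftrightarrow C_{23}$, $C_1\leftrightarrow C_3$, $q\mapsto q^{-1}$ (fixing $C_2,C_{13},C_{123}$) is an algebra automorphism $\phi$ of $\mathfrak{aw}(3)$: using $[X,Y]_{q^{-1}}=[Y,X]_q$ from \eqref{eq:qcom}, it carries \eqref{eq:aw31} to itself and exchanges \eqref{eq:aw32} with \eqref{eq:aw33b}, and it permutes the central generators. A short computation, reordering the single cubic term via $[A,B]_q=\gamma-C$, gives $\phi(\varOmega_{1,2,3})=\varOmega_{1,2,3}$. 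Since $\phi$ is a ring homomorphism and $\phi(C_{12})=C_{23}$, we get $[\varOmega_{1,2,3},C_{23}]=\phi\bigl([\varOmega_{1,2,3},C_{12}]\bigr)=0$, so centrality follows from the first commutator.

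For the invariance, recall from Theorem~\ref{theo-matricesR} that $\rb_a=r_a^{-1}$, so $\varOmega_{1,2,3}$ is fixed by $\rb_1,\rb_2$ as soon as it is fixed by $r_1,r_2$; only $r_1(\varOmega_{1,2,3})=\varOmega_{1,2,3}$ and $r_2(\varOmega_{1,2,3})=\varOmega_{1,2,3}$ remain. From Proposition~\ref{prop-ri} I would record the explicit action: $r_1$ exchanges $C_1\leftrightarrow C_2$ (hence $\beta\leftrightarrow\gamma$), fixes $C_3,C_{12},C_{123}$, and sends $C_{23}\mapsto C_{13}$ and $C_{13}\mapsto \beta-[C_{12},C_{13}]_q$; $r_2$ exchanges $C_2\leftrightarrow C_3$ (hence $\alpha\leftrightarrow\gamma$), fixes $C_1,C_{23},C_{123}$, and sends $C_{12}\mapsto C_{31}=\gamma-[C_{12},C_{23}]_{q^{-1}}$ and $C_{13}\mapsto C_{12}$. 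Substituting these into $\varOmega_{1,2,3}$ and simplifying back with $C=-[A,B]_q+\gamma$ and its analogues returns $\varOmega_{1,2,3}$. Conceptually, $r_a(\varOmega_{1,2,3})$ is automatically central, and $\varOmega_{1,2,3}$ is the essentially unique central element of degree three; this makes the invariance expected and, in practice, reduces the check to matching a few coefficients once the possible correction by a polynomial in $C_1,C_2,C_3,C_{123}$ is pinned down.

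The main obstacle is purely the bulk of the noncommutative bookkeeping in the substitution steps, where terms such as $\bigl([A,C]_q-\beta\bigr)^2$ proliferate. To make the verification finite and mechanical I would fix the PBW normal form of $\mathfrak{aw}(3)$ in the two generators $A=C_{12}$, $B=C_{23}$ with central coefficients (eliminating $C=C_{13}$ through $C=-[A,B]_q+\gamma$), reduce both $\varOmega_{1,2,3}$ and each image $r_a(\varOmega_{1,2,3})$ to this normal form, and compare coefficients. The reflection $\phi$ halves the centrality work and the identity $\rb_a=r_a^{-1}$ halves the invariance work, leaving exactly the kind of finite check amenable to the computer-aided verification used elsewhere in the paper.
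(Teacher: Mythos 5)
Your proposal is correct and follows the same overall strategy as the paper: reduce the invariance claim to $r_1$, $r_2$ (the paper uses $\rb_1$, $r_2$) via $\rb_a=r_a^{-1}$ from Theorem~\ref{theo-matricesR}, read off the explicit actions from Proposition~\ref{prop-ri}, and verify by direct algebraic manipulation with the $\mathfrak{aw}(3)$ relations that one recovers $\varOmega_{1,2,3}$. The one place you genuinely diverge is centrality: the paper simply cites~\cite{Zh2}, whereas you give a self-contained verification, and your halving device there is a nice touch --- the semilinear map $\phi$ ($C_1\leftrightarrow C_3$, $C_{12}\leftrightarrow C_{23}$, $q\mapsto q^{-1}$) does permute the relations \eqref{eq:aw31}--\eqref{eq:aw33b} correctly (using $[A,B]_{q^{-1}}=[B,A]_q$ from \eqref{eq:qcom}), fixes $C_{13}$, and fixes $\varOmega_{1,2,3}$ (the $(q-q^{-1})C_{13}\gamma$ and $(q-q^{-1})C_{13}^2$ discrepancies generated by the cubic, quadratic and linear terms cancel exactly), so $[\varOmega_{1,2,3},C_{23}]=\phi([\varOmega_{1,2,3},C_{12}])$ legitimately reduces the check to a single commutator. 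The remaining computations ($[\varOmega_{1,2,3},C_{12}]=0$ and $r_a(\varOmega_{1,2,3})=\varOmega_{1,2,3}$) are only sketched, but that is exactly the level of detail of the paper's own proof, and your plan for carrying them out (normal form in $C_{12}$, $C_{23}$ with central coefficients, eliminating $C_{13}$ via \eqref{eq:aw31}) is sound; the aside about $\varOmega_{1,2,3}$ being the ``essentially unique'' central element of degree three is an unproved heuristic, but you do not rely on it.
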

\begin{proof}
The centrality of $\varOmega_{1,2,3}$ is proven in~\cite{Zh2}. Since $\rb_i$ is the inverse of $r_i$, it is enough to show the property for $\rb_1$ and $r_2$.
Their actions on $\varOmega_{1,2,3}$ are easy to compute. Then, algebraic manipulations using the $\mathfrak{aw}(3)$ relations show that we get back to $\varOmega_{1,2,3}$.
\end{proof}

\subsection[The central elements of aw(4)]{The central elements of $\boldsymbol{\mathfrak{aw}(4)}$}

In $\mathfrak{aw}(4)$, the possible choices of $I_1$, $I_2$, $I_3$ lead to the following list of Casimir elements:
 \begin{equation}\label{listcasaw4}
 \varOmega_{1,2,3},\ \varOmega_{1,2,4},\ \varOmega_{1,3,4},\ \varOmega_{2,3,4},\ \varOmega_{12,3,4},\ \varOmega_{1,23,4},\ \varOmega_{1,2,34} .
 \end{equation}
We have the following properties.
\begin{prop}\label{propcas4}
 The elements in~\eqref{listcasaw4} are all central in $\mathfrak{aw}(4)$ and satisfy
\begin{eqnarray}\label{eq:cas4}
 \varOmega_{12,3,4}-\varOmega_{1,3,4}-\varOmega_{2,3,4}=\varOmega_{1,23,4}-\varOmega_{1,2,4}-\varOmega_{1,3,4}=\varOmega_{1,2,34}-\varOmega_{1,2,3}-\varOmega_{1,2,4} .
\end{eqnarray}
\end{prop}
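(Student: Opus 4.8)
The plan is to prove centrality first, then the chain of identities, organizing the work so that the coproduct maps $\delta_i$, the braid automorphisms, and a reflection symmetry absorb most of the combinatorics and leave only a bounded number of explicit commutator/polynomial computations. The starting observation is that $C_1,C_2,C_3,C_4$ and $C_{1234}$ are central in $\mathfrak{aw}(4)$, so every $\varOmega$ automatically commutes with them; centrality of a given Casimir therefore reduces to commutation with the five generators $C_{12},C_{23},C_{34},C_{123},C_{234}$. For the block Casimirs I would use the realizations $\varOmega_{12,3,4}=\delta_1(\varOmega_{1,2,3})$, $\varOmega_{1,23,4}=\delta_2(\varOmega_{1,2,3})$, $\varOmega_{1,2,34}=\delta_3(\varOmega_{1,2,3})$ coming from~\eqref{eq:deltaOmega}: as a coproduct image, each is central in the subalgebra generated by the images of the $\mathfrak{aw}(3)$-generators, hence commutes with three of the five relevant generators, leaving only two residual commutators to check by hand for each.

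For the singleton Casimirs I would propagate centrality by automorphisms. By Proposition~\ref{propcas3}, $\varOmega_{1,2,3}$ is central in $\mathfrak{aw}(3)$; to promote this to $\mathfrak{aw}(4)$ I only need $[\varOmega_{1,2,3},C_{34}]=0$ and $[\varOmega_{1,2,3},C_{234}]=0$, which I compute directly using the commutation relations of Lemmas~\ref{prop:def-nico},~\ref{lemm:relcom2} and the expressions in Lemma~\ref{lem:ff}. Once $\varOmega_{1,2,3}$ is central, the reflection $k\mapsto 5-k$ gives $\varOmega_{2,3,4}$, and then $\varOmega_{1,3,4}=r_1(\varOmega_{2,3,4})$ and $\varOmega_{1,2,4}=r_2(\varOmega_{1,3,4})$ follow; by Proposition~\ref{prop-act-ri} these two automorphisms act on these Casimirs simply by the index transposition $(i,i+1)$, so the images are again the named singleton Casimirs, and being automorphism images of a central element they are central. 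The reflection symmetry here is realized by the automorphism $\Delta_{1,\dots,n-1}$ of Proposition~\ref{prop-Delta} (combined with the $q\mapsto q^{-1}$ automorphism and the anti-automorphism ${\cdot}^{\rm up}$ to preserve the orientation of hole-elements and of products), which one checks sends $\varOmega_{I_1,I_2,I_3}$ to $\varOmega_{\bar I_3,\bar I_2,\bar I_1}$; in particular it exchanges $\varOmega_{12,3,4}\leftrightarrow\varOmega_{1,2,34}$ and fixes $\varOmega_{1,23,4}$, so among the block Casimirs only two representatives need a direct residual check.

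For the identities I would first rewrite~\eqref{eq:cas4} as the two equalities $\varOmega_{12,3,4}-\varOmega_{2,3,4}=\varOmega_{1,23,4}-\varOmega_{1,2,4}$ and $\varOmega_{1,23,4}-\varOmega_{1,3,4}=\varOmega_{1,2,34}-\varOmega_{1,2,3}$. Tracking the seven Casimirs through the reflection symmetry above, the first expression in~\eqref{eq:cas4} is exchanged with the third while the second is preserved; hence it suffices to prove a single equality, say the first equals the second, and then to apply the symmetry to obtain the second equals the third, which completes the chain. The remaining equality I would prove by direct expansion: substitute the definition~\eqref{eq:aw3cas} for each $\varOmega$ and~\eqref{def-CI2} for each hole-element, reduce to a normal-ordered polynomial in the generators of $\mathfrak{aw}(4)$, and simplify using the defining relations together with the commutator formulas of Lemmas~\ref{lem:relc1},~\ref{lemm:relcom2} and~\ref{lem:ff}.

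The main obstacle is computational rather than structural: although the scaffolding above collapses the problem to roughly half a dozen explicit commutators plus one polynomial identity, each of these still requires expanding degree-three expressions containing hole-elements and re-ordering noncommuting products, with delicate bookkeeping of the $q$ versus $q^{-1}$ coefficients arising both from the asymmetric form of~\eqref{eq:aw3cas} and from the distinction between $C_{I_1I_2}$ and $C_{I_2I_1}$. This is exactly the step the authors carry out with computer assistance, and in particular verifying that the reflection acts on the $\varOmega$'s precisely as claimed is itself such a calculation; a genuinely conceptual, computation-free derivation of~\eqref{eq:cas4} remains open.
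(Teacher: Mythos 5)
Your scaffolding is sound and close in spirit to the paper's: both proofs seed everything with $\varOmega_{1,2,3}$, propagate by the coproducts $\delta_i$ (via~\eqref{eq:deltaOmega}) and the automorphisms $r_i,\rb_i$ (via Proposition~\ref{prop-act-ri}), and ultimately rest on a computer-verified core. The essential difference is economy, i.e., how small that core is made. The paper's core consists of exactly two facts checked by normal-ordering in FORM: the identity~\eqref{eq:cas4} itself and the single commutator $[\varOmega_{1,2,3},C_{34}]=0$. Everything else is then free: $[\varOmega_{1,2,3},C_{124}]=0$ follows at no cost because $C_{124}=-[C_{123},C_{34}]_q+C_{12}C_4+C_3C_{1234}$ by~\eqref{def-CI2} is built from elements already known to commute with $\varOmega_{1,2,3}$, and then $C_{234}=\rb_1\rb_2(C_{124})$ together with the $\rb_i$-invariance of $\varOmega_{1,2,3}$ (Proposition~\ref{propcas3}) gives $[\varOmega_{1,2,3},C_{234}]=0$ without a second hard computation. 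More importantly, the paper proves~\eqref{eq:cas4} \emph{before} attacking the block Casimirs and then exploits it: since the three elements $\delta_1(\varOmega_{1,2,3})$, $\delta_2(\varOmega_{1,2,3})$, $\delta_3(\varOmega_{1,2,3})$ coincide modulo elements already known to be central, and since every generator $C_I$ with $|I|>1$ is $\delta_i(C_J)$ for some $i$ and $J\subset\{1,2,3\}$, applying $\delta_i$ to $[\varOmega_{1,2,3},C_J]=0$ shows $C_I$ commutes with one (hence all) of them. Your ordering (centrality first, identities second) forgoes this and leaves you with two genuinely hard residual commutators for $\varOmega_{1,2,3}$ and two for each block-Casimir representative; each of these is of the same difficulty as $[\varOmega_{1,2,3},C_{34}]=0$, which the authors state they could not do by hand from the lemmas, so your phrase ``compute directly using the commutation relations of Lemmas~\ref{prop:def-nico},~\ref{lemm:relcom2} and~\ref{lem:ff}'' understates what is being deferred.

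The one genuinely risky ingredient is the reflection $k\mapsto 5-k$. At the stage of Proposition~\ref{propcas4} it is not available for free: the symmetric-group action on the $\omega_S$ (Proposition~\ref{prop:romega}) and the $(\cdot)^{\rm up}$-invariance (Remark~\ref{rema:omegaUp}) are proved \emph{after} and partly \emph{from} this proposition, so invoking them here would be circular. Establishing directly that your composite of $\Delta_{1,\dots,3}$, $q\mapsto q^{-1}$ and $(\cdot)^{\rm up}$ sends $\varOmega_{I_1,I_2,I_3}$ to $\varOmega_{\bar I_3,\bar I_2,\bar I_1}$ is not a formality, because~\eqref{eq:aw3cas} is not manifestly symmetric (the powers $q^{\pm2}$ are distributed asymmetrically and $C_{I_1I_3}\neq C_{I_3I_1}$); it requires the same kind of $\mathfrak{aw}(3)$-manipulation as the proof of Proposition~\ref{propcas3}. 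So the reflection does not actually reduce the computational load, it only relocates it. None of this invalidates your plan --- every deferred step is true and verifiable --- but the paper's arrangement gets the same conclusion from a strictly smaller set of brute-force verifications.
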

\begin{proof}
We do not have a simple proof for the equalities in~\eqref{eq:cas4} and for the fact that $\varOmega_{1,2,3}$ commutes with $C_{34}$.
For these two facts, we rely on computer-aided calculations~\cite{form}, that we use in the following way.
We have implemented the relations~\eqref{rel:com1}--\eqref{eq:2h2},~\eqref{relaw49},~\eqref{relaw410},~\eqref{relaw42}--\eqref{eq:2h1},~\eqref{eq:comut3}--\eqref{eq:comut4},
\eqref{eq:comut6}--\eqref{eq:comut7},~\eqref{rel:coma3},~\eqref{rel:coma4} and~\eqref{eq:com1324} which allow us to order the elements of the following set
\[\cG=\{C_{134}, C_{14}, C_{124}, C_{24}, C_{13}, C_{234}, C_{123}, C_{34}, C_{23},C_{12}\}.\]
Therefore, any word $W\in \mathfrak{aw}(4)$ can be written as follows:
\begin{gather*}
 W\!= \!\!\!\sum_{i_1,\dots i_{10}}\!\!\! a_{i_1\dots i_{10} }(C_{134})^{i_1} (C_{14})^{i_2} (C_{124})^{i_3} (C_{24})^{i_4} (C_{13})^{i_5} (C_{234})^{i_6} (C_{123})^{i_7} (C_{34})^{i_8} (C_{23})^{i_9} (C_{12})^{i_{10}}.
\end{gather*}
The diamond lemma checks the associativity of the product in the algebra. It consists in choosing three elements $A$, $B$, $C$ of $\cG$ in the wrong order and to order them following two different ways:
firstly we order $AB$, then we order the result with $C$, secondly we order $BC$, and then we order the result with $A$.
This provides relations between ordered monomials. We have computed also further relations by using the diamond lemma with these new relations and the ordering ones.
This set of relations allows to check~\eqref{eq:cas4} and that $\varOmega_{1,2,3}$ commutes with $C_{34}$.

At this stage, we know that $\varOmega_{1,2,3}$ commutes with $C_{12}$, $C_{23}$, $C_{123}$, from Proposition~\ref{propcas3}, and with $C_{34}$. This implies that it commutes with $C_{124}$, using the definition of $C_{124}$ in terms of~$C_{123}$ and $C_{34}$. Then since $C_{234}=\rb_1\rb_2(C_{124})$ and since $\rb_1$, $\rb_2$ leave $\varOmega_{1,2,3}$ invariant, it follows that~$\varOmega_{1,2,3}$ commutes also with $C_{234}$
and is therefore central in $\mathfrak{aw}(4)$.

Now we can obtain all others $\varOmega_{i,j,k}$ from $\varOmega_{1,2,3}$ by applying some automorphisms as follows:
\[\varOmega_{1,2,4}=\rb_3(\varOmega_{1,2,3}) ,\qquad \varOmega_{1,3,4}=\rb_2(\varOmega_{1,2,4}) ,\qquad \varOmega_{2,3,4}=\rb_1(\varOmega_{1,3,4}) .\]
These are directly obtained from the definition of $\varOmega_{i,j,k}$ and the explicit formulas for the action of the automorphisms $r_i$ in Proposition~\ref{prop-act-ri}. So we have that all $\varOmega_{i,j,k}$ are central in $\mathfrak{aw}(4)$.

Finally, we note that, by definition of the coproduct maps, we have
\[\varOmega_{12,3,4}=\delta_1(\varOmega_{1,2,3}) ,\qquad \varOmega_{1,23,4}=\delta_2(\varOmega_{1,2,3}) ,\qquad\varOmega_{1,2,34}=\delta_3(\varOmega_{1,2,3}) ,\]
and that these three elements coincide, due to the equalities~\eqref{eq:cas4} up to elements which are already central. So for any generator $C_I$ of $\mathfrak{aw}(4)$, we only need to show that it commutes with one of these three elements. We can assume that $|I|>1$ (otherwise $C_I$ is central), and thus we have that $C_I=\delta_i(C_J)$ for some $i$ and some $J\subset\{1,2,3\}$. Applying $\delta_i$ to the relation $[\varOmega_{1,2,3},C_J]=0$ of $\mathfrak{aw}(3)$, we get that $C_I$ commutes with one of the elements above, as needed.
\end{proof}

We associate a central element $\omega_S$ to any subset $S\subset\{1,2,3,4\}$ with $|S|\geq3$:
\begin{itemize}\itemsep=0pt
\item for $S=\{a,b,c\}$, we set $\omega_{\{a,b,c\}}=\varOmega_{a,b,c}$ with $a<b<c$;
\item for $S=\{1,2,3,4\}$,
the relations~\eqref{eq:cas4} allow to define a unique element:
\begin{align*}
\omega_{\{1,2,3,4\}}&{}=\varOmega_{12,3,4}-\varOmega_{1,3,4}-\varOmega_{2,3,4}=\varOmega_{1,23,4}-\varOmega_{1,2,4}-\varOmega_{1,3,4}\\
&{}=\varOmega_{1,2,34}-\varOmega_{1,2,3}-\varOmega_{1,2,4} .
\end{align*}
\end{itemize}
The following result gives the action of the braid group automorphisms on the central elements~$\omega_S$ and shows that this action simply amounts to the permutation action of the symmetric group on the subset $S$. Below, $(i,i+1)$ denotes the transposition of $i$ and $i+1$.
\begin{prop}
For all $S\subset \{1,2,3,4\}$ with $|S|\geq 3$, and all $i=1,2,3$, we have
\[r_i\omega_S=\rb_i\omega_S=\omega_{(i,i+1)\cdot S} .\]
\end{prop}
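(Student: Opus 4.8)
The plan is to first reduce the statement for $\rb_i$ to the one for $r_i$, and then treat the cases $|S|=4$ and $|S|=3$ separately, leaning on the invariance of $\varOmega_{1,2,3}$ (Proposition~\ref{propcas3}), the explicit formulas for the automorphisms (Propositions~\ref{prop-ri} and~\ref{prop-act-ri}), and the generalisation to non-adjacent sets (Proposition~\ref{prop:gen}). First I would observe that it suffices to prove $r_i\omega_S=\omega_{(i,i+1)\cdot S}$ for all $i$ and all $S$ with $|S|\geq 3$: once this holds for every such $S$, applying it to $T=(i,i+1)\cdot S$ gives $r_i\omega_{(i,i+1)\cdot S}=\omega_S$, and composing with $\rb_i$ (using $\rb_ir_i=\mathrm{Id}$ from Theorem~\ref{theo-matricesR}) yields $\rb_i\omega_S=\omega_{(i,i+1)\cdot S}$. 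So from now on only $r_i$ must be considered.

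For $S=\{1,2,3,4\}$, which is fixed by every transposition, I would use the three equal expressions for $\omega_{\{1,2,3,4\}}$ and, for a given $i$, pick the one in which $\{i,i+1\}$ sits inside a single connected subset: for instance $\varOmega_{1,2,34}-\varOmega_{1,2,3}-\varOmega_{1,2,4}$ for $i=3$. In the first summand every building block contains $\{i,i+1\}$ entirely or is disjoint from it, so $r_i$ fixes it block by block (the trivial case of Proposition~\ref{prop-ri}), while the other two summands are size-$3$ Casimirs interchanged by $r_i$; hence the difference is preserved. This reduces the $|S|=4$ case to the $|S|=3$ case.

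For $|S|=3$ the four subsets form a single orbit, and $(i,i+1)$ either fixes $S=\{a,b,c\}$ setwise (when $\{i,i+1\}\subset S$) or moves it (when $|S\cap\{i,i+1\}|=1$). In the fixed case the seven elements $C_a,C_b,C_c,C_{ab},C_{bc},C_{ac},C_{abc}$ attached to $S$ satisfy the $\mathfrak{aw}(3)$ relations by Proposition~\ref{prop:gen}, the element $\varOmega_{a,b,c}$ is their Casimir, and, since $i,i+1$ are adjacent and both lie in $S$, the map $r_i$ restricts to the corresponding $\mathfrak{aw}(3)$ braid automorphism; invariance then follows from Proposition~\ref{propcas3}. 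In the moving case with $i+1\in S$ and $i\notin S$, Proposition~\ref{prop-act-ri} shows that $r_i$ simply relabels $i+1\mapsto i$ in every block, giving $r_i\varOmega_{a,b,c}=\varOmega_{(i,i+1)\cdot\{a,b,c\}}$ at once.

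The main obstacle is the remaining moving case, with $i\in S$ and $i+1\notin S$: here $r_i$ acts on the blocks in the unfavourable direction and produces, term by term, the mixed expressions of Proposition~\ref{prop-ri} rather than a clean relabelling, so the identity cannot be read off from Proposition~\ref{prop-act-ri}, nor deduced from the favourable cases together with $r_i\rb_i=\mathrm{Id}$ alone. To handle it I would invoke the anti-automorphism ${\cdot}^{\rm up}$ and the intertwining relation~\eqref{shifti}, $\rb_i(X^{\rm up})=(r_i(X))^{\rm up}$: for such an $S$ the opposite relabelling is the favourable one for $\rb_i$, so $\rb_i\omega_S=\omega_{(i,i+1)\cdot S}$ is immediate, and~\eqref{shifti} converts this into $r_i\omega_S=\omega_{(i,i+1)\cdot S}$ once one knows that $\varOmega_{a,b,c}^{\rm up}=\varOmega_{a,b,c}$. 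Thus the crux is the invariance of the Casimir elements under ${\cdot}^{\rm up}$, a finite verification reducible to the $\mathfrak{aw}(3)$ relations and the identities~\eqref{eq:C13C31} relating $C_{I_1I_2}$ and $C_{I_2I_1}$, of exactly the computational nature already used (computer-aided) in Proposition~\ref{propcas4}. Equivalently, I expect one could establish the few base identities such as $r_3\varOmega_{1,2,3}=\varOmega_{1,2,4}$ directly from the explicit formula for $\varOmega$ and Proposition~\ref{prop-ri}, just as their $\rb$-counterparts were obtained in the proof of Proposition~\ref{propcas4}, and then propagate them to all $S$ using the braid relations and the coproduct compatibility~\eqref{eq:deltaOmega}.
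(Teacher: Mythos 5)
Your proposal is correct, and most of it matches the paper's own argument: the reduction of the $\rb_i$-statement to the $r_i$-statement, the blockwise treatment of $S=\{1,2,3,4\}$ via the three equal expressions for $\omega_{\{1,2,3,4\}}$, the term-by-term relabelling when $|S\cap\{i,i+1\}|=1$ in the favourable direction, and the reduction to Proposition~\ref{propcas3} (via Proposition~\ref{prop:gen}) when $\{i,i+1\}\subset S$ are all exactly what the paper does. Where you genuinely diverge is on the one hard case you correctly identify, namely $r_i$ acting when $i\in S$ and $i+1\notin S$ (e.g., $r_3\varOmega_{1,2,3}$). The paper resolves it without any new computation: using $r_i\delta_i=\delta_i$ and Proposition~\ref{prop-r-delta} it first checks that the merged Casimirs $\varOmega_{12,3,4}$ and $\varOmega_{1,2,34}$ are $r_3$-invariant, then rewrites $\varOmega_{1,2,3}+\varOmega_{1,2,4}=\varOmega_{12,3,4}-\varOmega_{1,2,34}-\varOmega_{1,3,4}-\varOmega_{2,3,4}$ via~\eqref{eq:cas4}; the right-hand side is $r_3$-invariant, and since $r_3\varOmega_{1,2,4}=\varOmega_{1,2,3}$ is already known, $r_3\varOmega_{1,2,3}=\varOmega_{1,2,4}$ follows. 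You instead propose to prove $\varOmega_{a,b,c}^{\rm up}=\varOmega_{a,b,c}$ directly and transport the easy $\rb_i$-action through~\eqref{shifti}. This is a valid and non-circular alternative: the up-invariance is indeed a finite computation in the $\mathfrak{aw}(3)$-type subalgebra attached to $\{a,b,c\}$ (the paper confirms in Remark~\ref{rema:omegaUp} that it ``is checked by direct calculation for $\omega_{\{1,2,3\}}$'', and Proposition~\ref{prop:gen} transfers the needed relations to arbitrary $a<b<c$). But be aware that the paper deliberately postpones this fact and \emph{derives} up-invariance in Remark~\ref{rema:omegaUp} as a consequence of $r_i=\rb_i$ on the Casimirs, i.e., of the very proposition you are proving; your route reverses that logical order. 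What your approach buys is a transparent symmetry argument for the hard direction; what it costs is one extra explicit identity (the reversal-invariance of the cubic term $qC_{ab}C_{bc}C_{ac}$ plus its companions), which the paper's trick with~\eqref{eq:cas4} avoids entirely since Proposition~\ref{propcas4} already supplies everything needed.
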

\begin{proof}
Several actions of the automorphisms on the elements $\varOmega_{a,b,c}$ are immediate to obtain from the explicit formulas for the action of the automorphisms $r_i$ in Proposition~\ref{prop-act-ri}. In fact, we have at once that $\varOmega_{a,b,c}$ is invariant by $r_i$, $\rb_i$ if $\{i,i+1\}\subset\{a,b,c\}$. We also have immediately that if $\{i,i+1\}\cap\{a,b,c\}=\{i\}$, then $\rb_i$ transforms $i$ into $i+1$, while if $\{i,i+1\}\cap\{a,b,c\}=\{i+1\}$, then $r_i$ transforms $i+1$ into $i$. So it remains to show
\begin{equation}\label{eq:actri-aw4}
r_3(\varOmega_{1,2,3})=\varOmega_{1,2,4} ,\qquad r_2(\varOmega_{1,2,4})=\varOmega_{1,3,4} ,\qquad r_1(\varOmega_{1,3,4})=\varOmega_{2,3,4} .
\end{equation}
First, we note that, either directly or using the relation $r_i\delta_i=\delta_i$, we have
\[r_1\varOmega_{12,3,4}=\varOmega_{12,3,4} ,\qquad r_2\varOmega_{1,23,4}=\varOmega_{1,23,4} ,\qquad r_3\varOmega_{1,2,34}=\varOmega_{1,2,34} .\]
Using the relation $r_3\delta_1=\delta_1r_2$ and $r_1\delta_3=\delta_3r_1$, we get also
\[r_3\varOmega_{12,3,4}=\varOmega_{12,3,4} ,\qquad r_1\varOmega_{1,2,34}=\varOmega_{1,2,34} .\]
Then, using~\eqref{eq:cas4}, we can write
\[\varOmega_{1,2,3}+\varOmega_{1,2,4}=\varOmega_{12,3,4}-\varOmega_{1,2,34}-\varOmega_{1,3,4}-\varOmega_{2,3,4} .\]
The right-hand side is invariant by $r_3$, so we deduce that the left-hand side is as well. Since we already know that $r_3\varOmega_{1,2,4}=\varOmega_{1,2,3}$ we conclude that $r_3\varOmega_{1,2,3}=\varOmega_{1,2,4}$. The other actions in~\eqref{eq:actri-aw4} are proved in a~similar way.
\end{proof}

\subsection[Central elements of aw(n) for any n]{Central elements of $\boldsymbol{\mathfrak{aw}(n)}$ for any $\boldsymbol{n}$}
The definition of $\omega_S$ introduced in the previous section for $n=4$, is now generalised. If ${S\subset\{1,\dots,n\}}$ we naturally consider the element $\omega_S$ defined below as an element of $\mathfrak{aw}(n')$ for any~$n'\geq n$.

\begin{prop}
Let $S\subset\{1,\dots,n\}$, with $|S|\geq3$. Let $I_1$, $I_2$, $I_3$ be three non-empty subsets of $S$ such that $I_1\cup I_2\cup I_3= S$ and $I_1<I_2<I_3$.
The quantity
\begin{equation}\label{eq:omegaS}
\omega_{S} = \sum_{\atopn{I\subset I_1, J\subset I_2}{K\subset I_3}} (-1)^{|S|-|I|-|J|-|K|} \varOmega_{I,J,K}
\end{equation}
is well defined in the sense that it does not depend on the choice of $I_1$, $I_2$, $I_3$.
\end{prop}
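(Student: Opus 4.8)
The plan is to reduce the independence statement to a single ``elementary move'' on the splitting, and to prove invariance under that move by a triple inclusion--exclusion computation resting on one four-term relation among the $\varOmega$'s.

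First I would observe that, since $I_1<I_2<I_3$ and $I_1\cup I_2\cup I_3=S$, a choice of $(I_1,I_2,I_3)$ is nothing but a choice of two cut points in the ordered set $S=\{s_1<\dots<s_m\}$: it is a pair $(a,b)$ with $1\le a<b\le m-1$, where $I_1=\{s_1,\dots,s_a\}$, $I_2=\{s_{a+1},\dots,s_b\}$, $I_3=\{s_{b+1},\dots,s_m\}$. The set of such pairs is connected under the moves $a\mapsto a\pm1$ and $b\mapsto b\pm1$ (subject to the inequalities), so it suffices to show that $\omega_S$ is unchanged when a single cut is moved by one step, i.e.\ when one extremal element is transferred between two adjacent blocks. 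Since $\varOmega_{I,J,K}=0$ as soon as one of $I,J,K$ is empty, I may freely extend the sum in~\eqref{eq:omegaS} to all subsets $I\subset I_1$, $J\subset I_2$, $K\subset I_3$ (the empty contributions vanish), which is what makes the signed sums behave like a genuine inclusion--exclusion.

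The key algebraic input is the ``block'' version of the relations~\eqref{eq:cas4}: for any increasing sequence of four non-empty subsets $P<Q<R<T$ (not necessarily adjacent) one has, cancelling the common term $\varOmega_{P,R,T}$ in the first equality of~\eqref{eq:cas4},
\begin{equation}\label{eq:star}
\varOmega_{PQ,R,T}+\varOmega_{P,Q,T}=\varOmega_{P,QR,T}+\varOmega_{Q,R,T},
\end{equation}
together with the analogous identity obtained from the second equality of~\eqref{eq:cas4} with the first slot as spectator. I would derive~\eqref{eq:star} for adjacent blocks by applying iterated coproduct maps $\delta_i$ to the identities~\eqref{eq:cas4} of $\mathfrak{aw}(4)$, using that each $\delta_i$ is a morphism and acts on Casimirs by~\eqref{eq:deltaOmega}, and then spread the blocks into arbitrary (gapped) positions by applying the braid automorphisms $r_i$ exactly as in the proof of Proposition~\ref{prop:gen}. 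I expect this step --- promoting~\eqref{eq:cas4} from the four singletons of $\mathfrak{aw}(4)$ to arbitrary non-adjacent blocks --- to be the main obstacle, since it requires the precise action of the $r_i$ on the elements $\varOmega_{P,Q,R}$ (the Casimir analogue of Proposition~\ref{prop-act-ri}) and careful bookkeeping of which equality of~\eqref{eq:cas4} survives under the relabelling.

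Granting~\eqref{eq:star}, I would finish as follows. To move the first cut, fix a non-empty $K\subset I_3$ and factor out the sign $(-1)^{|S|-|K|}$; the problem becomes the $K$-independence, in the split $I_1\sqcup I_2$, of $\Phi(A,B)=\sum_{\varnothing\neq I\subset A,\ \varnothing\neq J\subset B}(-1)^{|I|+|J|}\varOmega_{I,J,K}$. Writing $y=\min I_2$, $A=I_1$, $D=I_2\setminus\{y\}$, I compare $\Phi(I_1,\{y\}\cup D)$ with $\Phi(I_1\cup\{y\},D)$: the terms in which $y$ lies in no subset cancel immediately, and for the remaining terms I apply~\eqref{eq:star} with $(P,Q,R,T)=(I,\{y\},J',K)$. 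Summing against the elementary identity $\sum_{\varnothing\neq I\subset A}(-1)^{|I|}=-1$ (valid since $A\neq\varnothing$, and likewise for $D$), the four-term relation makes every surviving contribution cancel in pairs, giving $\Phi(I_1,\{y\}\cup D)=\Phi(I_1\cup\{y\},D)$. Reassembling over $K$ shows $\omega_S$ is unchanged by this move; moving the second cut is the mirror computation using the companion of~\eqref{eq:star} with the first slot as spectator. By the connectivity noted above, $\omega_S$ is therefore independent of the choice of $(I_1,I_2,I_3)$.
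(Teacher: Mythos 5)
Your argument is correct in outline, but it takes a genuinely different route from the paper. The paper proves the statement by induction on $|S|$: the base cases are $|S|=3$ (trivial) and $|S|=4$ (the computer-verified identities~\eqref{eq:cas4}, transported to arbitrary four-element sets by the $\rb_i$); for the inductive step it applies $\delta_1$ to the already-known equality of two expressions for $\omega_{S'}$ with $|S'|=k-1$ and uses~\eqref{eq:deltaOmega} to see that each side becomes the corresponding expression for $\omega_S$ plus two terms $\omega_{S\setminus\{1\}}$, $\omega_{S\setminus\{2\}}$ of smaller cardinality, which match by the induction hypothesis. You instead make the combinatorial mechanism explicit: a direct inclusion--exclusion shows that invariance under one elementary cut move follows from the single four-term relation $\varOmega_{PQ,R,T}+\varOmega_{P,Q,T}=\varOmega_{P,QR,T}+\varOmega_{Q,R,T}$, applied with $P,Q,R,T$ arbitrary subsets satisfying $P<Q<R<T$ (I checked your signed-sum cancellation, including the boundary terms with $J'=\varnothing$ and $I''=\varnothing$; it closes up correctly, and there is no circularity since you only use the elementary transformation rules for $r_i$ on the $\varOmega$'s from Proposition~\ref{prop-act-ri}, not Proposition~\ref{prop:romega}). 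The trade-off is exactly where you place it: the paper only ever needs~\eqref{eq:cas4} for four singletons plus the trivially-checked coproduct formula~\eqref{eq:deltaOmega}, whereas you must first promote~\eqref{eq:cas4} to quadruples of arbitrary, possibly disconnected and gapped, subsets. That promotion is feasible by the bootstrapping you describe (grow blocks with iterated $\delta_i$, create gaps and disconnect blocks with the $r_i$, $\rb_i$, in the spirit of Proposition~\ref{prop:gen} and the opening of the proof of Proposition~\ref{prop:romega}), but it is a genuine piece of work of roughly the same size as the paper's entire induction, and in effect it hides its own recursion on the number and sizes of the blocks. What your version buys is a cleaner conceptual statement — the well-definedness of $\omega_S$ is equivalent, via pure inclusion--exclusion, to one universally quantified four-term identity — and that identity is a reusable by-product; what the paper's version buys is economy, since the $\delta_1$ trick lets the induction hypothesis absorb all the lower-order terms without ever formulating the block identity.
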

\begin{proof}
In the course of the proof, we will denote $\omega_{I_1, I_2, I_3}$ the right-hand side of~\eqref{eq:omegaS}.
We prove this proposition by recursion on the cardinality of $S$. The case $|S|=3$ is trivial. For $S=\{1,2,3,4\}$, the proposition is proven by equation~\eqref{eq:cas4}. Suitable applications of the maps $\rb_1,\dots ,\rb_{n-1}$ to the equalities~\eqref{eq:cas4} yield the corresponding equalities for any indices ${a<b<c<d}$. For example, $\rb_4$ changes 4 in 5.
This allows to prove the statement for $S=\{a,b,c,d\}$.

Now we take $k\geq5$, suppose that $\omega_S$ is well defined when $|S|\!<\!k$ and consider ${S\!=\!\{1,2,\dots ,k\}}$.
By recursion hypothesis, we have
\begin{equation*}
\omega_{\{1,\dots,k-1\}}=\omega_{\{1,\dots,p-1\},\{p,\dots,q-1\},\{q,\dots,k-1\}}=\omega_{\{1,\dots,p\},\{p+1,\dots,q-1\},\{q,\dots,k-1\}}.
\end{equation*}
From~\eqref{eq:deltaOmega}, the following formulas for the application of $\delta_1$ are easy to check:{\samepage
\begin{gather}
\delta_1\omega_{\{1,\dots,p-1\},\{p,\dots,q-1\},\{q,\dots,k-1\}} = \omega_{\{1,2,\dots,p\},\{p+1,\dots,q\},\{q+1,\dots,k\}}
\label{tata}\nonumber\\
\qquad{}+\omega_{\{1,3,\dots,p\},\{p+1,\dots,q\},\{q+1,\dots,k\}}+\omega_{\{2,\dots,p\},\{p+1,\dots,q\},\{q+1,\dots,k\}},\label{toto}\\
\delta_1\omega_{\{1,\dots,p\},\{p+1,\dots,q-1\},\{q,\dots ,k-1\}} = \omega_{\{1,2,\dots,p+1\},\{p+2,\dots,q\},\{q+1,\dots,k\}}
\nonumber\label{tutu}\\
\qquad{}+\omega_{\{1,3,\dots,p+1\},\{p+2,\dots,q\},\{q+1,\dots,k\}}+\omega_{\{2,\dots,p+1\},\{p+2,\dots,q\},\{q+1,\dots,k\}}\label{titi}.
\end{gather}}

\noindent
By recursion hypothesis, the two last terms in~\eqref{toto} and in~\eqref{titi} coincide, so that the two remaining terms coincide as well.
This shows that we can move a letter from $I_1$ to $I_2$. The similar statement from $I_2$ to $I_3$ is proven along the same lines.
This allows to relate any possible choices of $I_1$, $I_2$, $I_3$.
Suitable applications of the maps $\rb_1,\dots ,\rb_{n-1}$ yield the statement for any~$S$ of cardinal $k$, as it was done above for $k=4$.
\end{proof}

{\bf Action of the coproduct maps.} We deduce how the elements $\omega_S$ in $\mathfrak{aw}(n)$ are related to those of $\mathfrak{aw}(n+1)$ by the coproduct maps.
\begin{coro}\label{coro:deltaW}
Let $S'\subset\{1,\dots,n\}$ and $i\in\{1,\dots,n\}$.
\begin{itemize}\itemsep=0pt
\item If $i\notin S'$, we have
\begin{equation*}
\delta_i\omega_{S'}= \omega_{S} ,
\end{equation*}
where $S$ is obtained from $S'$ by increasing by 1 all elements greater than $i$.
\item If $i\in S'$, we have
\begin{equation}\label{eq:deltaWs}
\delta_i\omega_{S'}= \omega_{S} +\omega_{S\setminus\{i\}}+\omega_{S\setminus\{i+1\}},
\end{equation}
where $S$ is obtained from $S'$ by increasing by 1 all elements greater than $i$, and adding~${i+1}$.
\end{itemize}
\end{coro}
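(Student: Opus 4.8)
The plan is to push the coproduct $\delta_i$ directly through the defining sum \eqref{eq:omegaS}, using only the explicit rule \eqref{eq:deltaOmega} for $\delta_i$ on a single Casimir $\varOmega$ together with the partition-independence of $\omega_S$ established just above. Fix a partition $S'=I_1\sqcup I_2\sqcup I_3$ with $I_1<I_2<I_3$, and let $\phi$ denote the order-preserving relabelling raising by $1$ every index strictly greater than $i$. If $i\notin S'$, then no index set occurring in \eqref{eq:omegaS} contains $i$, so \eqref{eq:deltaOmega} gives $\delta_i\varOmega_{I,J,K}=\varOmega_{\phi(I),\phi(J),\phi(K)}$ for every term; since $(I,J,K)\mapsto(\phi(I),\phi(J),\phi(K))$ is a cardinality-preserving bijection onto the index triples of $\omega_S$ for $S=\phi(S')$ and $|S|=|S'|$, the transformed sum is exactly the defining sum of $\omega_S$, which proves the first bullet by relabelling.

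For $i\in S'$, say $i$ lies in the block $I_m$; the argument is identical for $m=1,2,3$, so I would present $m=2$ and remark on the others. I would split the sum \eqref{eq:omegaS} according to whether the subset chosen inside $I_m$ contains $i$. By \eqref{eq:deltaOmega}, a term whose $I_m$-subset avoids $i$ is sent to $\varOmega_{\phi(I),\phi(J),\phi(K)}$, whose $m$-th block contains neither $i$ nor $i+1$, whereas a term whose $I_m$-subset contains $i$ is sent to a $\varOmega$ whose $m$-th block contains both $i$ and $i+1$. Writing $S=\phi(S')\cup\{i+1\}$ with induced partition having $m$-th block $\phi(I_m)\cup\{i+1\}$, every $\varOmega$ produced by $\delta_i\omega_{S'}$ thus also occurs in the defining sum of $\omega_S$, indexed by an $m$-th subset $\tilde J$ containing either both of $i,i+1$ or neither.

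The decisive step is the sign bookkeeping. Comparing cardinalities ($|S|=|S'|+1$, and promoting $i$ to $\{i,i+1\}$ raises the relevant subset size by one) shows that the ``both'' terms enter $\delta_i\omega_{S'}$ with exactly the sign they carry in $\omega_S$, while the ``neither'' terms enter with the opposite sign; hence $\delta_i\omega_{S'}=[\omega_S]_{\mathrm{both}}-[\omega_S]_{\mathrm{neither}}$, where $[\omega_S]_{\ast}$ is the partial sum of \eqref{eq:omegaS} over $m$-th subsets $\tilde J$ of type $\ast$, the four types being labelled by $(i\in\tilde J,\ i+1\in\tilde J)$. Evaluating $\omega_{S\setminus\{i+1\}}$ and $\omega_{S\setminus\{i\}}$ with their induced partitions (legitimate by partition-independence) expresses the first as $-[\omega_S]_{\mathrm{neither}}-[\omega_S]_{\mathrm{mixed}_{1}}$ and the second as $-[\omega_S]_{\mathrm{neither}}-[\omega_S]_{\mathrm{mixed}_{2}}$, again because of the same cardinality shift, where $\mathrm{mixed}_{1}$ and $\mathrm{mixed}_{2}$ are the two one-element types. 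Adding $\omega_S=[\omega_S]_{\mathrm{both}}+[\omega_S]_{\mathrm{neither}}+[\omega_S]_{\mathrm{mixed}_{1}}+[\omega_S]_{\mathrm{mixed}_{2}}$, the two mixed families cancel and a net $-[\omega_S]_{\mathrm{neither}}$ survives, giving $\omega_S+\omega_{S\setminus\{i\}}+\omega_{S\setminus\{i+1\}}=[\omega_S]_{\mathrm{both}}-[\omega_S]_{\mathrm{neither}}=\delta_i\omega_{S'}$, which is the second bullet.

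The main obstacle I anticipate is purely combinatorial: tracking the $(-1)$-signs consistently through the cardinality shifts and verifying that the four families indexed by $(i\in\tilde J,\ i+1\in\tilde J)$ telescope as claimed; the algebra $\aw$ enters only through \eqref{eq:deltaOmega}. I would also note the edge cases $i=\min S'$ or $i=\max S'$, which force $m=1$ or $m=3$: there the same computation applies verbatim, and since $|S'|\geq 3$ we have $|S\setminus\{i\}|,|S\setminus\{i+1\}|\geq 3$, so all three elements $\omega_S,\omega_{S\setminus\{i\}},\omega_{S\setminus\{i+1\}}$ on the right are indeed defined.
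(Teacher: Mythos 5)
Your proof is correct and follows essentially the same route as the paper: push $\delta_i$ through the defining sum \eqref{eq:omegaS} via \eqref{eq:deltaOmega} and match signs against the defining sums of $\omega_S$, $\omega_{S\setminus\{i\}}$, $\omega_{S\setminus\{i+1\}}$ on the induced partitions. The only difference is that the paper first invokes partition-independence to take the block containing $i$ to be the singleton $\{i\}$, which makes your ``neither'' family vanish (empty block gives $\varOmega=0$) and reduces the four-type bookkeeping to an immediate check; your more general four-family cancellation is a valid, if slightly longer, substitute.
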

\begin{proof}
When $i\notin S'$, the formula is immediate using~\eqref{eq:deltaOmega} on any definition of $\omega_{S'}$. If $i\in S'$, from the preceding proposition, we can define $\omega_{S'}$ using a partition $(I_1,I_2,I_3)$ of $S'$ such that one of the subset is the singleton $\{i\}$. Then the formula~\eqref{eq:deltaWs} becomes straightforward to check using~\eqref{eq:deltaOmega}.
\end{proof}

{\bf Centrality of the elements $\boldsymbol{\omega_S}$.} Now we can prove that all elements $\omega_S$ are central. Note that this is equivalent to the centrality of the elements $\varOmega_{I_1,I_2,I_3}$ since both sets span the same space.
\begin{prop}
 For any $S\subset\{1,2,\dots ,n\}$ with $|S|\geq3$, the element $\omega_{S}$ is central in $\aw$.
\end{prop}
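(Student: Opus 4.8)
The plan is to argue by induction on the rank $n$, the base cases $n\le 4$ being settled by Propositions~\ref{propcas3} and~\ref{propcas4}. Throughout I use two elementary facts. First, centrality is preserved by automorphisms: if $\phi$ is an automorphism of $\aw$ with $\phi(\omega_S)=\omega_{S'}$, then $\omega_S$ is central if and only if $\omega_{S'}$ is. Second, the coproduct maps $\delta_b$ are algebra morphisms, so if a generator can be written as $C_I=\delta_b(C_J)$ and if $\omega_S$ agrees, up to elements already known to be central, with $\delta_b(z)$ for some $z$ central in $\mathfrak{aw}(n-1)$, then $[\omega_S,C_I]=\delta_b([z,C_J])=0$ by the inductive hypothesis. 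The whole argument then reduces to checking $[\omega_S,C_I]=0$ for every connected $I$, the case $|I|=1$ being trivial since such $C_I$ is central.

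For $|I|\ge 2$ one has $C_I=\delta_b(C_J)$ exactly when the pair $\{b,b+1\}$ is \emph{clean} for $I$, meaning both of $b,b+1$ lie in $I$ or neither does; and Corollary~\ref{coro:deltaW} shows that $\omega_S$ is, up to central corrections of strictly smaller cardinality, a $\delta_b$-image precisely when $\{b,b+1\}$ is clean for $S$. So for a fixed $C_I$ I would look for an index $b$ simultaneously clean for $I$ and for $S$, reduce the commutator to $\mathfrak{aw}(n-1)$, and close the induction; the correction terms appearing in~\eqref{eq:deltaWs} have cardinality $|S|-1$ and are dealt with by a secondary induction on $|S|$.

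The obstacle is that a common clean index need not exist: already for $S=\{1,3,5\}$ and $I=\{1,2,3\}$ every admissible $b$ is a boundary of $S$, so $\omega_S$ is not reachable by any coproduct compatible with $C_I$. I would remove this obstruction with the automorphisms. Using Proposition~\ref{prop-act-ri} one checks that whenever $i\notin S$ and $i+1\in S$ the map $r_i$ replaces $i+1$ by $i$ throughout the summands of $\omega_S$, so $r_i(\omega_S)=\omega_{(i,i+1)\cdot S}$ (the ``descent'' move, which is the only one needed; monotonicity of the blocks is preserved because $i\notin S$ forces $\max(I_1)<i$ in each term). Iterating descent moves carries any $S$ of cardinality $m$ to the consecutive set $\{1,\dots,m\}$ while preserving centrality, so it suffices to treat consecutive $S$. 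For $S=\{1,\dots,m\}$ the only boundary is at $b=m$, so a clean $b$ avoiding the forbidden values $\{m,j-1,k\}$ attached to $I=\{j,\dots,k\}$ always exists once $n\ge 5$: one uses a both-out index $b>m$ when the rank leaves room (no correction terms) and otherwise a both-in index $b<m$ (with corrections of cardinality $m-1$). The base of the cardinality induction, $m=3$, becomes after this reduction the centrality of $\varOmega_{1,2,3}$ in $\aw$; the only generators whose commutation with $\varOmega_{1,2,3}$ is not immediate from the locality relation~\eqref{relcommv} are the $C_{\{2,\dots,k\}}$ and $C_{\{3,\dots,k\}}$, and each of these is reduced to $\mathfrak{aw}(n-1)$ either through $\iota_{n-1}$ (when $k<n$) or as a $\delta_{n-1}$-image, using $\delta_{n-1}(\varOmega_{1,2,3})=\varOmega_{1,2,3}$ by~\eqref{eq:deltaOmega}.

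The step I expect to require the most care is precisely this interplay between coproducts and automorphisms: a single coproduct can fail to be compatible with both $C_I$ and a spread-out $S$, so one must first establish that the $r_i$ realise the index-permutation action on the whole family $\{\omega_S\}$ (at least for the descent moves) in order to legitimately reduce to the consecutive case. Everything else is bookkeeping with the two nested inductions, and the genuinely computational kernel---the identity $[\varOmega_{1,2,3},C_{34}]=0$ together with the relations~\eqref{eq:cas4}---is already absorbed into the rank-$4$ base case provided by Proposition~\ref{propcas4}.
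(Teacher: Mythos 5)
Your proposal is correct and follows essentially the same strategy as the paper's proof: first use the elementary exchange moves of Proposition~\ref{prop-act-ri} (the ``descent'' action of the $r_i$ on the $\omega_S$, which the paper isolates before proving the full permutation action) to reduce to consecutive sets $S$, then close an induction by writing $C_I$ as a coproduct image and invoking Corollary~\ref{coro:deltaW}, with Propositions~\ref{propcas3} and~\ref{propcas4} as the computational anchor. The only differences are organizational --- you induct on the rank $n$ with a nested induction on $|S|$, while the paper inducts on $|S|$ with a nested induction on the endpoint of $I$ in the base case $|S|=3$ --- and your explicit discussion of when a common ``clean'' index exists makes precise a point the paper leaves implicit.
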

\begin{proof}
First, we note that it is enough to prove that $\omega_{1\dots k}$ is central for any $k\geq 3$. Indeed, if we have $S=\{i_1,\dots,i_k\}$ with $i_1<i_2<\dots <i_k$, there is a sequence of $\rb_i$'s sending $\omega_{1\dots k}$ to~$\omega_S$. For example, the sequence $\rb_{i_k}\dots\rb_{k}$ transforms the index $k$ in $\omega_{1\dots k}$ into $i_k$, and a similar argument allows to transform the other indices into $i_1,i_2,\dots,i_{k-1}$. So indeed the centrality of~$\omega_{1\dots k}$ implies the centrality of $\omega_S$ since they are related by an automorphism.

Now we prove that $\omega_{1\dots k}$ is central by induction on $k$. Let $k=3$. It is immediate that $\omega_{123}$ commutes with $C_I$ if $I$ is disjoint from $\{1,2,3\}$. So we take $I=\{a,\dots, b-1, b\}$ with~$a\leq 3$ and~$b\geq 3$ and we use induction on $b$. Propositions~\ref{propcas3} and~\ref{propcas4} deal with the cases~$b=3$ and~$b=4$. So let $b\geq 5$. In this case $C_I=\delta_{b-1}(C_{a\dots b-1})$. By induction hypothesis, $\omega_{123}$ commutes with $C_{a\dots b-1}$ and applying $\delta_{b-1}$, using that $\delta_{b-1}(\omega_{123})=\omega_{123}$, we get that $\omega_{123}$ commutes with $C_I$.

Next, let $k\geq 3$ and assume by induction that $\omega_S$ is central for any $S$ with $|S|\leq k$. We will prove that $\omega_{1\dots k+1}$ commutes with $C_I$ for any connected subset $I$ and it is enough to assume that $|I|>1$, since otherwise $C_I$ is central. Since $|I|>1$, then $C_I=\delta_i(C_J)$ for some $i$ and some~$J$. By induction, we have that $[\omega_{1\dots k},C_J]=0$, and applying $\delta_i$, we get
\[[\omega_{\{1,\dots, k+1\}}+\omega_{\{1,\dots, k+1\}/\{i\}}+\omega_{\{1,\dots, k+1\}/\{i+1\}} ,\ C_I]=0 ,\]
using Corollary~\ref{coro:deltaW} for the action of $\delta_i$ on $\omega_{1\dots k}$. By induction hypothesis, $C_I$ commutes with the two summands corresponding to subsets of size $k$, so we conclude that it commutes also with~$\omega_{\{1,\dots, k+1\}}$.
\end{proof}

{\bf Action of the automorphisms $\boldsymbol{r_1,\ldots,r_{n-1}}$.} The following result gives the action of the braid group automorphisms $r_1,\dots,r_{n-1}$ on the central elements $\omega_S$ and shows that this action simply amounts to the permutation action of the symmetric group on $n$ letters on subsets~$S$ of~$\{1,\dots,n\}$. Below, $(i,i+1)$ denotes the transposition of $i$ and $i+1$.

\begin{prop}\label{prop:romega}
For all $S\subset \{1,\dots,n\}$ with $|S|\geq 3$, and all $i=1,\dots,n-1$, we have
\begin{equation}\label{eq:romega}
r_i(\omega_S)=\rb_i(\omega_S)=\omega_{(i,i+1).(S)}.
\end{equation}
\end{prop}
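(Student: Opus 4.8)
The plan is to collapse the whole statement to a single family of identities and then prove those by induction on $|S|$. Since $\rb_i=r_i^{-1}$ by Theorem~\ref{theo-matricesR} and $(i,i+1)$ is an involution, it suffices to prove $r_i(\omega_S)=\omega_{(i,i+1).S}$ for every $S$ and every $i$: applying this with $S$ replaced by $(i,i+1).S$ and inverting $r_i$ then gives $\rb_i(\omega_S)=\omega_{(i,i+1).S}$ as well. I would run the induction on $k=|S|$, splitting into four cases according to how $\{i,i+1\}$ meets $S$; the base $k=3$ is covered by Propositions~\ref{propcas3} and~\ref{propcas4} (extended to arbitrary $\{a,b,c\}$ through the simple coproduct action~\eqref{eq:deltaOmega} on the $\varOmega$'s).

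Two of the four cases are handled directly and termwise on the expansion~\eqref{eq:omegaS}. If $\{i,i+1\}\cap S=\varnothing$, then every generator $C_M$ occurring in each $\varOmega_{I,J,K}$ is indexed by a union of blocks contained in $S$, so its block support misses $\{i,i+1\}$ and $r_i$ fixes it by the trivial clause of Proposition~\ref{prop-ri}; hence $r_i(\omega_S)=\omega_S=\omega_{(i,i+1).S}$. If $i+1\in S$ and $i\notin S$ (the ``move-left'' case), then for each of the seven generators $C_M$ building $\varOmega_{I,J,K}$ the block support contains $i+1$ but not $i$, so Proposition~\ref{prop-act-ri} gives $r_i(C_M)=C_{(i,i+1).M}$ on the nose (even when the image set is disconnected, the hole bookkeeping matches). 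Therefore $r_i(\varOmega_{I,J,K})=\varOmega_{(i,i+1).I,(i,i+1).J,(i,i+1).K}$, and since $(i,i+1)$ bijects the partitions of $S$ onto those of $(i,i+1).S$ preserving cardinalities, summing~\eqref{eq:omegaS} yields $r_i(\omega_S)=\omega_{(i,i+1).S}$.

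The remaining two cases, ``move-right'' ($i\in S$, $i+1\notin S$) and invariance when $\{i,i+1\}\subseteq S$, are the genuinely hard ones, because on the increasing expansion~\eqref{eq:omegaS} the map $r_i$ no longer sends generators to single generators. I would treat these by the coproduct machinery, exactly mirroring the derivation of $r_3(\varOmega_{1,2,3})=\varOmega_{1,2,4}$ inside the proof of Proposition~\ref{propcas4}. For move-right, set $T=S\cup\{i+1\}$ and choose $S'$ so that Corollary~\ref{coro:deltaW} gives $\delta_i(\omega_{S'})=\omega_T+\omega_{(i,i+1).S}+\omega_S$; applying $r_i$, using $r_i\delta_i=\delta_i$ (Proposition~\ref{prop-r-delta}) and the already-proved move-left value $r_i(\omega_{(i,i+1).S})=\omega_S$, one isolates $r_i(\omega_S)=\omega_{(i,i+1).S}+\big(\omega_T-r_i(\omega_T)\big)$. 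Thus move-right at size $k$ is equivalent to the invariance of $\omega_T$ at size $k+1$. For the invariance case I would lower the size by peeling off a consecutive pair $\{l,l+1\}\subseteq T$ with $l\notin\{i-1,i,i+1\}$: the commuting relations of Proposition~\ref{prop-r-delta} give $r_i\delta_l=\delta_l r_{i'}$ with $i'\in\{i-1,i\}$, and feeding in the expansion of $\omega_T$ through $\delta_l$ from Corollary~\ref{coro:deltaW} reduces $r_i(\omega_T)=\omega_T$ to the invariance statement at size $k$, i.e.\ to the inductive hypothesis.

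The main obstacle is the apparent circularity between move-right at size $m$ and invariance at size $m+1$, compounded by the fact that the invariance reduction above needs $T$ to contain a consecutive pair away from $\{i,i+1\}$. I would resolve this by a single joint induction in which, at each size, invariance is established \emph{first}: descending strictly via a far coproduct whenever a far consecutive pair exists, and, for the exceptional configurations whose only consecutive pair is $\{i,i+1\}$ itself, by first conjugating $\omega_T$ through a product of the already-established move-left automorphisms (which freely slide an isolated index downward) to reach a configuration admitting a far pair, commuting $r_i$ through that conjugation with the braid relations of Theorem~\ref{theo-matricesR}. Once invariance holds at size $m+1$, the relation above delivers move-right at size $m$. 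A final routine point is that these identities are produced a priori in $\mathfrak{aw}(n+1)$ through the maps $\delta_\bullet$, but they descend to $\aw$ because the $r_i$ commute with the inclusion $\iota_n$, which is what legitimises running the whole induction inside a single algebra.
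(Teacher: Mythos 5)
Your reduction to proving only $r_i(\omega_S)=\omega_{(i,i+1).S}$, the termwise treatment of the disjoint and move-left cases, and the identity $r_i(\omega_S)=\omega_{(i,i+1).S}+\omega_T-r_i(\omega_T)$ with $T=S\cup\{i+1\}$ are all correct. The gap is in the invariance case $\{i,i+1\}\subseteq T$. You peel off a consecutive pair $\{l,l+1\}\subseteq T$ with $l\notin\{i-1,i,i+1\}$, and for configurations admitting no such pair you propose to create one by conjugating with automorphisms commuting with $r_i$. This fails for $T=\{a,i,i+1,b\}$ with $a<i$ and $b>i+1$: the admissible conjugators are $r_k,\rb_k$ with $|k-i|\geq 2$, which can only slide $a$ within $\{1,\dots,i-1\}$ and $b$ within $\{i+2,\dots,n\}$ and never touch $i$ or $i+1$, so every set in the orbit still has the form $\{a',i,i+1,b'\}$, whose only consecutive pairs have $l\in\{i-1,i,i+1\}$. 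These size-$4$ sets are not covered by your stated base case ($k=3$), yet your induction needs them twice: move-right at size $3$ is derived from invariance of exactly such a $T$, and invariance at size $5$ reduces, via your far-pair peeling, to invariance of size-$4$ subsets which can again be of this exceptional form. So the induction does not get off the ground as written. It is repairable by enlarging the base case to all $|T|\leq 4$, using the unnumbered proposition following Proposition~\ref{propcas4} transported by coproduct shifts and far conjugation.

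The paper sidesteps the problem by organising the induction the other way around. Move-right at size $m$ is proved directly, not from invariance at size $m+1$: after sliding $S$ by far conjugation into the form $S_1\cup S_2$ with $S_1=\{\dots,i\}$ and $S_2=\{i+2,\dots\}$ connected, one peels a coproduct $\delta_{i+2}$ (or $\delta_{i-2}$) off $\omega_{S}$ via Corollary~\ref{coro:deltaW}; this commutes with $r_i$ up to an index shift and invokes the hypothesis only at sizes strictly less than $m$. With move available in both directions at size $m-1$, invariance at size $m$ then follows from peeling at the pair $\{i,i+1\}$ itself, $\omega_T=\delta_i(\omega_{T'})-\omega_{T\setminus\{i\}}-\omega_{T\setminus\{i+1\}}$, using $r_i\delta_i=\delta_i$ and the fact that $r_i$ exchanges the two subtracted terms; this works uniformly, with no exceptional configurations. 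You avoided this peeling precisely because in your scheme it would be circular, but that circularity is an artefact of deriving move-right from invariance rather than proving it directly.
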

\begin{proof}
We first consider $\varOmega_{I_1,I_2,I_3}$ as defined in~\eqref{eq:aw3cas}, and set $S=I_1\cup I_2\cup I_3$.

If $\{i,i+1\}\subset I_1$ (or $I_2$ or $I_3$) or if $\{i,i+1\}\cap S=\varnothing$, it is clear that $r_i(\varOmega_{I_1,I_2,I_3})=\varOmega_{I_1,I_2,I_3}$.
If~$i\in I_1$ and $i+1\not\in S$, then $\rb_i$ transforms $i$ into $i+1$ for each element $C_I$ entering the definition of
$\varOmega_{I_1,I_2,I_3}$, so that $\rb_i(\varOmega_{I_1,I_2,I_3})=\varOmega_{(i,i+1).I_1,I_2,I_3}$.
We get similar expressions when~$i\in I_2$ or~$i\in I_3$ and $i+1\not\in S$.
Reciprocally, if $i+1\in I_1$ and $i\not\in S$ we get $r_i(\varOmega_{I_1,I_2,I_3})=\varOmega_{(i,i+1).I_1,I_2,I_3}$ and analogous expressions for
$i+1\in I_2$ or $i+1\in I_3$ (and $i\not\in S$).

From the previous paragraph, we get immediately that
\begin{gather}
\text{when}\ i\in S \ \text{and} \ i+1\not\in S,\ \rb_i(\omega_S)=\omega_{(i,i+1).(S)}, \nonumber\\
\text{when}\ i+1\in S \ \text{and} \ i\not\in S,\ r_i(\omega_S)=\omega_{(i,i+1).(S)}.\label{ri-simple}
\end{gather}

Now, we are ready to prove~\eqref{eq:romega} using a recursion on $|S|$. Let $i\in\{1,\dots, n-1\}$, and consider $r_i(\omega_S)$.
We note that when $\{i,i+1\}\cap S=\varnothing$, the relation~\eqref{eq:romega} is obvious, so that we will always assume that $\{i,i+1\}\cap S$ is not empty.

We start with $|S|=3$. If $\{i,i+1\}\subset S$, then acting with suitable~$r_k$'s and~$\rb_k$'s commuting with~$r_i$, only using~\eqref{ri-simple}, it is sufficient to consider $S=\{i-1,i,i+1\}$, or $S=\{i,i+1,i+2\}$. Then, the result follows from Proposition~\ref{propcas3}.
If $i\in S$ and $i+1\not\in S$, then acting with suitable~$r_k$'s and~$\rb_k$'s commuting with $r_i$, only using~\eqref{ri-simple}, it is sufficient to consider $S=\{i-2,i-1,i\}$, $S=\{i-1,i,i+2\}$, or $S=\{i,i+2,i+3\}$. Then, the result follows from Proposition~\ref{propcas4}. The last case $i+1\in S$ and $i\not\in S$ is dealt with similar arguments.

Let $|S|>3$ and assume that the proposition is proved for sets of size smaller than $|S|$. Suppose $\{i,i+1\}\subset S$. Then, there exists a subset $S'$ with $3\leq |S'|<|S|$ such that using Corollary~\ref{coro:deltaW}, we have
\begin{equation*}
\omega_{S} = \delta_i\omega_{S'}-\omega_{S\setminus\{i\}}-\omega_{S\setminus\{i+1\}}.
\end{equation*}
The map $r_i$ exchanges $\omega_{S\setminus\{i\}}$ and $\omega_{S\setminus\{i+1\}}$
by induction hypothesis, and leaves $ \delta_i\omega_{S'}$ invariant, since $r_i\delta_i=\delta_i$.
We conclude that $r_i(\omega_{S}) =\omega_{S}$.

If $i\in S$ and $i+1\not\in S$, by~\eqref{ri-simple}, we only need to consider the action of $r_i$. Acting with suitable $r_k$'s and $\rb_k$'s commuting with $r_i$, only using~\eqref{ri-simple}, it is sufficient to consider
$S=S_1\cup S_2$ with $S_1=\{\dots,i\}$ and $S_2=\{i+2,\dots\}$ and both connected.
If $|S_2|>1$, there exists a subset~$S'$ with $3\leq |S'|<|S|$ such that using Corollary~\ref{coro:deltaW}, we have
\begin{equation*}
\omega_{S} = \delta_{i+2}\omega_{S'}-\omega_{S\setminus\{i+2\}}-\omega_{S\setminus\{i+3\}}.
\end{equation*}
Note that $r_i$ commutes with $\delta_{i+2}$. Using the induction hypothesis, we find
\begin{equation*}
r_i\omega_{S} = \delta_{i+2}\omega_{(i,i+1).S'}-\omega_{(i,i+1).S\setminus\{i+2\}}-\omega_{(i,i+1).S\setminus\{i+3\}}.
\end{equation*}
Using again the Corollary~\ref{coro:deltaW}, the right-hand side is indeed $\omega_{(i,i+1).S}$.

Finally, if $|S_2|\leq1$, then $|S_1|>2$, there exists a subset $S'$ with $3\leq |S'|<|S|$ such that using Corollary~\ref{coro:deltaW}, we have
\begin{equation*}
\omega_{S} = \delta_{i-2}\omega_{S'}-\omega_{S\setminus\{i-2\}}-\omega_{S\setminus\{i-1\}}.
\end{equation*}
Note that $r_i\delta_{i-2}=\delta_{i-2}r_{i-1}$. Using the induction hypothesis, we find
\begin{equation*}
r_i\omega_{S} = \delta_{i-2}\omega_{(i-1,i).S'}-\omega_{(i,i+1).S\setminus\{i-2\}}-\omega_{(i,i+1).S\setminus\{i-1\}}.
\end{equation*}
Using again the Corollary~\ref{coro:deltaW}, the right-hand side is indeed $\omega_{(i,i+1).S}$. The last case $i+1\in S$ and $i\not\in S$ is dealt with similar arguments.
\end{proof}

\begin{rema}\label{rema:omegaUp}
A consequence of the above properties of the central elements $\omega_S$ is that they are invariant under the map $(.)^{\rm up}$.
Indeed, it is checked by direct calculation for $\omega_{\{1,2,3\}}$, and then extended to $\omega_{\{1,\dots,k\}}$ using the fact that the coproduct maps commute with $(.)^{\rm up}$. Finally, the property for any set $S$ is obtained from the equality $r_i(X^{\rm up})=\rb_i(X)^{\rm up}$, valid for any element~$X$, and the fact that $r_i=\rb_i$
on the Casimir elements.
\end{rema}

{\bf Action of the automorphism $\boldsymbol{r_0}$.} So far we have proved in particular that the set
\begin{equation*}
\Gamma_n = \text{Span}\{\omega_S , S\subset\{1,\dots,n\}\}
\end{equation*}
of central elements is stable under the action of the automorphisms $r_1,\dots,r_{n-1}$ (and their inverses). Here we complete our study by considering the automorphism $r_0$. There are explicit formulas for the action of $r_0$ on the central elements $\omega_S$ but they are not very illuminating, so we shall be satisfied by proving that the automorphism $r_0$ does not produce new central elements

\begin{prop}
The automorphism $r_0$ of $\aw$ leaves stable $\Gamma_n$ and moreover satisfies $r_0^2(\omega_S)=\omega_S$ for any $S\subset\{1,\dots,n\}$.
\end{prop}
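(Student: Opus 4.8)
The plan is to treat the two assertions separately. The second one, $r_0^2(\omega_S)=\omega_S$, I would derive directly from results already in hand, independently of the stability claim. By Proposition~\ref{prop-Delta} we have $r_0^2=(\Delta_{2,\dots,n-1})^2$, which is a word in $r_2,\dots,r_{n-1}$ alone. By Proposition~\ref{prop:romega} each $r_i$ with $2\le i\le n-1$ acts on every $\omega_S$ through the transposition $(i,i+1)$, so any word $w$ in $r_2^{\pm1},\dots,r_{n-1}^{\pm1}$ sends $\omega_S$ to $\omega_{\bar w(S)}$, where $\bar w\in S_n$ is its image under the projection of the braid group onto the symmetric group. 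Since $\Delta_{2,\dots,n-1}$ is the half-twist on the strands $2,\dots,n$, its image $\bar\Delta_{2,\dots,n-1}$ is the longest (order-reversing) element of $S_{\{2,\dots,n\}}$, whence $(\bar\Delta_{2,\dots,n-1})^2=\mathrm{id}$ and $r_0^2(\omega_S)=\omega_S$. (For $n=3$ one uses instead $r_0^2=r_2^2$ and $r_2^2(\omega_S)=\omega_{(2,3)^2(S)}=\omega_S$.)

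For the stability $r_0(\Gamma_n)\subseteq\Gamma_n$, since the $\omega_S$ span $\Gamma_n$ and $r_0$ is linear it suffices to show $r_0(\omega_S)\in\Gamma_n$ for every $S$. I would first make two reductions. If $1\notin S$, then every generator $C_I$ occurring in $\omega_S$ has all its indices in $\{\min S,\dots,\max S\}\subseteq\{2,\dots,n\}$, and $r_0$ fixes such generators by~\eqref{def-r0}; hence $r_0(\omega_S)=\omega_S$. Next, $r_0$ commutes with $r_2,\dots,r_{n-1}$ by Theorem~\ref{theo-matricesR}, so if $r_0(\omega_S)\in\Gamma_n$ then, applying $r_i$ ($2\le i\le n-1$) and using $r_i(\omega_T)=\omega_{(i,i+1)(T)}$, we also get $r_0(\omega_{(i,i+1)(S)})\in\Gamma_n$. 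As the $S_{\{2,\dots,n\}}$-orbit of $\{1,\dots,m\}$ is the set of all $m$-element subsets containing $1$, it is enough to treat $S=\{1,\dots,m\}$ for $3\le m\le n$.

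These remaining cases I would handle by induction on $n$, with an inner induction on $m$. The engine is the commutation $\delta_i\,r_0=r_0\,\delta_i$ for $i\ge 2$ (Proposition~\ref{prop-r-delta}), read between $\mathfrak{aw}(n-1)$ and $\mathfrak{aw}(n)$, together with Corollary~\ref{coro:deltaW}, which guarantees $\delta_i(\Gamma_{n-1})\subseteq\Gamma_n$. From~\eqref{eq:deltaWs} one has $\omega_{1\dots m}=\delta_{m-1}(\omega_{1\dots m-1})-\omega_{\{1,\dots,m-2,m\}}-\omega_{1\dots m-1}$, where the $\omega_{1\dots m-1}$ inside $\delta_{m-1}$ is computed in $\mathfrak{aw}(n-1)$. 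Applying $r_0$ and pushing it through $\delta_{m-1}$ (legitimate since $m-1\ge 2$) gives $r_0(\omega_{1\dots m})=\delta_{m-1}\bigl(r_0(\omega_{1\dots m-1})\bigr)-r_0(\omega_{\{1,\dots,m-2,m\}})-r_0(\omega_{1\dots m-1})$: the first term lies in $\Gamma_n$ by the induction hypothesis on $n$ (applied to $\omega_{1\dots m-1}$ in $\mathfrak{aw}(n-1)$) and Corollary~\ref{coro:deltaW}, while the last two terms have size $m-1$ and lie in $\Gamma_n$ by the inner induction. This covers $4\le m\le n$. The inner base $m=3$ with $n\ge 5$ reduces to $\mathfrak{aw}(n-1)$, since then $n-1\notin\{1,2,3\}$ gives $\omega_{123}=\delta_{n-1}(\omega_{123})$ and $r_0(\omega_{123})=\delta_{n-1}\bigl(r_0(\omega_{123})\bigr)\in\Gamma_n$.

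The main obstacle, and the only genuinely computational input, is the set of base cases $n=3$ and $n=4$, where the coproduct reduction leaves no room because $r_0$ unavoidably mixes $C_1$ with $C_{1\dots n}$. For $n=3$ one must check directly that $r_0(\varOmega_{1,2,3})=\varOmega_{1,2,3}$ in $\mathfrak{aw}(3)$, and for $n=4$ that $r_0(\varOmega_{1,2,3})\in\Gamma_4$. Granting these, the orbit argument and the recursion propagate stability to all of $\Gamma_4$: indeed $\omega_{124},\omega_{134}$ follow from $\omega_{123}$ by commuting $r_0$ with $r_3,r_2$, and $\omega_{1234}$ follows from the $m=4$ recursion. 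These base verifications are finite manipulations in $\mathfrak{aw}(3)$ and $\mathfrak{aw}(4)$ and can be carried out by the same computer-aided method as in Proposition~\ref{propcas4}.
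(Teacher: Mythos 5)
Your proposal is correct and follows essentially the same route as the paper: the triviality of $r_0$ on $\omega_S$ for $1\notin S$, the orbit argument using that $r_0$ commutes with $r_2,\dots,r_{n-1}$, the recursion on $\omega_{1\dots m}$ via Corollary~\ref{coro:deltaW} and $\delta_i r_0=r_0\delta_i$, and the appeal to Proposition~\ref{prop-Delta} for $r_0^2$ all appear there in the same roles. The only notable difference is the $n=4$ base case, which the paper settles without computer assistance via the identity $r_0(\omega_{123})=\rb_3\rb_2\big(\varOmega_{12,3,4}^{\rm up}\big)=\rb_3\rb_2(\varOmega_{12,3,4})$, obtained from the explicit formulas of Proposition~\ref{prop-act-ri} together with Remark~\ref{rema:omegaUp}.
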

\begin{proof}
First, note that $r_0^2(\omega_S)=\omega_S$ follows from the formula $r_0^2=\rb_1\dots \rb_{n-2}\rb_{n-1}^2\rb_{n-2}\dots \rb_1$ of Proposition~\ref{prop-Delta} and the fact that $\Gamma_n$ is stable under $r_1,\dots,r_{n-1}$ and $r_1^2=\dots=r_{n-1}^2={\rm Id}$ on~$\Gamma_n$.

Then we prove by recurrence on $n$ that in $\aw$, we have that $r_0(\omega_{123})\in\Gamma_n$. For $n=3$, this is a straightforward calculation to show that $r_0(\omega_{123})=\omega_{123}$.

For $n=4$, a direct application on the definition of $\varOmega_{I_1,I_2,I_3}$ of the explicit formulas of Proposition~\ref{prop-act-ri} for the action of the automorphisms $r_a$ gives
\[r_0(\omega_{123})=\rb_3\rb_2\big(\varOmega_{12,3,4}^{\rm up}\big)=\rb_3\rb_2(\varOmega_{12,3,4}) ,\]
where we used the invariance under the map ${.}^{\rm up}$ of the elements $\varOmega_{I_1,I_2,I_3}$ for the last equality; this follows from Remark~\ref{rema:omegaUp}. The last expression is in $\Gamma_4$ since $\varOmega_{12,3,4}$ is in this span, which we already know to be stable by $\rb_2$, $\rb_3$.

Then we apply the coproduct map $\delta_4$ on the statement $r_0(\omega_{123})\in\Gamma_4$. The map $\delta_4$ commutes with $r_0$ (see Proposition~\ref{prop-r-delta}) and obviously sends $\omega_{123}$ to $\omega_{123}$. It also sends $\Gamma_n$ to $\Gamma_{n+1}$, from Corollary~\ref{coro:deltaW}, so by induction we conclude that $r_0(\omega_{123})\in\Gamma_n$ for any $n\geq 3$.

Next, quite similarly as in the previous paragraph, we apply $\delta_3$ on the statement ${r_0(\omega_{123})\!\in\!\Gamma_n}$. The map $\delta_3$ still commutes with $r_0$ and this allows to prove that $r_0(\omega_{1234})$ is in the desired span. Going on applying $\delta_4,\delta_5,\dots $, we get by induction that $r_0(\omega_{1\dots k})$ is in the desired span for all~$k\geq 3$.

Now, using Proposition~\ref{prop:romega}, one can go from $\omega_{1\dots k}$ to any $\omega_S$ with $|S|=k$ by applying a~suitable sequence of maps $r_i$. Moreover if $1\in S$, we can use only the maps $r_i$ with $i\geq 2$ (we do not need to touch the letter 1), which commute with $r_0$. Thus $r_0(\omega_S)$ is in the correct span if $1\in S$.

Finally, if $1\notin S$, then we trivially have $r_0(\omega_S)=\omega_S$ since $\omega_S$ is expressed in terms of generators $C_I$ with $1\notin I$, and these are all left invariant by $r_0$.
\end{proof}

\begin{exam}
During the proof, we got the equality $r_0(\omega_{1,2,3})=\rb_3\rb_2(\varOmega_{12,3,4})$. This can be used to give explicit expressions for the action of $r_0$. We skip the details and give the results for~${n=4}$. In the basis $\omega_{123}$, $\omega_{124}$, $\omega_{134}$, $\omega_{234}$, $\omega_{1234}$, the action of $r_0$ is given by the following matrix:
\[\left(\begin{array}{ccccc} 1 & 0 & 0 & 0 & \hphantom{-}0 \\
0 & 1 & 0 & 0 & \hphantom{-}0\\
0 & 0 & 1 & 0 & \hphantom{-}0\\
1 & 1 & 1 & 1 & -2\\
1 & 1 & 1 & 0 & -1\end{array}\right) .\]
Explicit but cumbersome formulas can be obtained for any $n$ by applying suitable coproduct maps and automorphisms as described during the proof.
\end{exam}

\begin{rema}
Since $r_0^2=r_1^2=\dots=r_{n-1}^2={\rm Id}$ on the set of central elements $\omega_S$, the action of the braid group on $n+1$ strands by automorphisms become an action of the symmetric group on $n+1$ letters on $\Gamma_n$.
\end{rema}

\section[Connections with U\_q(sl\_2)\^{}\{otimes n\} and the skein algebra]{Connections with $\boldsymbol{{\rm U}_q(\mathfrak{sl}_2)^{\otimes n}}$ and the skein algebra}\label{sec-connections}
In this section, we show how the $\aw$ algebra is related to ${\rm U}_q(\mathfrak{sl}_2)^{\otimes n}$ and to a certain Kauffman bracket skein algebra.
More precisely, one has to consider in ${\rm U}_q(\mathfrak{sl}_2)^{\otimes n}$ the subalgebra generated by the intermediate Casimir elements. We note that this algebra was recently shown to be isomorphic to the Kauffman bracket skein algebra of the sphere with $n+1$ punctures~\cite{CL}.
Therefore, we will mostly discuss the ${\rm U}_q(\mathfrak{sl}_2)^{\otimes n}$ case.

Consider the quantum group ${\rm U}_q(\mathfrak{sl}_2)$ following the conventions and notations of~\cite{avatar}, and denote by $Q$ its Casimir element (note that we have divided the Casimir element of~\cite{avatar} by $q+q^{-1}$, as explained in Example~\ref{ex:aw3}). By repeated application of the coproduct map of ${\rm U}_q(\mathfrak{sl}_2)$ on $Q$,
we construct the intermediate Casimir element $Q_I$, where $I$ is a connected subset of $\{1,\dots,n\}$.

We have a morphism of algebras given by
\begin{align*}
\vph\colon\
\aw &\to {\rm U}_q(\mathfrak{sl}_2)^{\otimes n},\\
C_I &\mapsto Q_I,
\end{align*}
It is known that the defining relation of $\aw$ are obeyed by the elements $Q_I$~\cite{CL, DeC}. Alternatively, the statement for general $n$
is obtained by applying the coproduct maps and the braid group automorphism on
the corresponding statement for $n=3$. We refer to the discussion in Section~\ref{sect:aw3-n}.
\begin{rema}
We wish to stress that several authors~\cite{CL, DeC} call the image of the map $\vph$ the Askey--Wilson algebra $\mathfrak{aw}(n)$. We follow instead the terminology of~\cite{avatar}, where the image of $\vph$ was referred to as the special Askey--Wilson algebra.
\end{rema}
The maps $\delta_i$ and $r_i$, $\rb_i$ are interpreted naturally through the morphism $\vph$. Indeed, $\delta_i$ is sent to the coproduct map
\begin{equation*}
\Delta_i\colon \ {\rm U}_q(\mathfrak{sl}_2)^{\otimes n}\ \to \ {\rm U}_q(\mathfrak{sl}_2)^{\otimes (n+1)},
\end{equation*}
which is the coproduct $\Delta$ applied in the $i^{\rm th}$ copy of ${\rm U}_q(\mathfrak{sl}_2)$. In the same way, the morphisms $r_i$, $i=1,\dots,n-1$, are sent to the morphisms
\begin{align*}
\rho_i\colon \ {\rm U}_q(\mathfrak{sl}_2)^{\otimes n} &\to {\rm U}_q(\mathfrak{sl}_2)^{\otimes n},\\
 X & \mapsto \tau_{i,i+1}\big( R_{i,i+1} X R_{i,i+1}^{-1}\big),
\end{align*}
where $R$ is the $R$-matrix and $\tau_{i,i+1}$ is the flip operation between the $i^{\rm th}$ and $(i+1)^{\rm th}$ copies of ${\rm U}_q(\mathfrak{sl}_2)$. This was shown for $n=3$ in~\cite{CGVZ}. It is then extended to general $n$ using the relations between the coproduct maps $\delta_i$ and the automorphisms $r_i$ in Proposition~\ref{prop-r-delta}, see also the discussion in Section~\ref{sect:aw3-n}.

Similarly, the maps $\delta_i$ and $r_i$, $\rb_i$ are also interpreted in the skein algebra by, respectively, an operation doubling the puncture $i$, and the half Dehn twist, see~\cite{avatar} or~\cite{CL} for more details.
\begin{rema}
The map $r_0$ is naturally interpreted in the skein algebra as a half Dehn twist involving the puncture at infinity. Such an easy interpretation of the map $r_0$ in ${\rm U}_q(\mathfrak{sl}_2)^{\otimes n}$ does not seem to exist.
\end{rema}

To describe the algebra generated by the intermediate Casimir elements in ${\rm U}_q(\mathfrak{sl}_2)^{\otimes n}$, we need to determine the kernel of $\vph$. A first step in this determination is given by the following proposition, which becomes an easy by-product of our construction.
\begin{prop}
All the central elements $\omega_S$, $S\subset\{1,\dots,n\}$ are in the kernel of the map $\vph$:%
\begin{equation*}
\vph(\omega_S)=0.
\end{equation*}
\end{prop}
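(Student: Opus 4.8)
The plan is to leverage the two compatibility properties established in this section, namely that $\vph$ intertwines the coproduct maps with the tensor coproducts and the braid automorphisms with their $R$-matrix realisations: $\vph\circ\delta_i=\Delta_i\circ\vph$ and $\vph\circ r_i=\rho_i\circ\vph$ for all $i$. Combined with the transformation rules for the central elements $\omega_S$ under $\delta_i$ (Corollary~\ref{coro:deltaW}) and under the automorphisms $r_i$ (Proposition~\ref{prop:romega}), these reduce the whole statement to a single base computation. The argument is a double induction: an outer induction on $|S|$, and within each cardinality a reduction, via the symmetric group action, to the single element $\omega_{1\dots k}$.

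First I would record the base case. For $|S|=3$, the element $\omega_{\{1,2,3\}}=\varOmega_{1,2,3}$ is the Casimir element of $\mathfrak{aw}(3)$, and it is classical (this is precisely the central element recalled in the introduction) that its image under $\vph$, written in terms of $Q_1,Q_2,Q_3,Q_{12},Q_{23},Q_{123}$, vanishes in ${\rm U}_q(\mathfrak{sl}_2)^{\otimes 3}$, hence also in ${\rm U}_q(\mathfrak{sl}_2)^{\otimes n}$ since only the first three tensor factors are involved. Thus $\vph(\omega_{\{1,2,3\}})=0$. For an arbitrary $S$ with $|S|=3$, Proposition~\ref{prop:romega} provides a product of maps $r_i$ carrying $\omega_{\{1,2,3\}}$ to $\omega_S$; applying $\vph\circ r_i=\rho_i\circ\vph$ repeatedly and using $\rho_i(0)=0$ yields $\vph(\omega_S)=0$. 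Note that only $r_1,\dots,r_{n-1}$ are needed here, since the adjacent transpositions $(i,i+1)$ already generate the full symmetric group on $\{1,\dots,n\}$; the map $r_0$ never enters.

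For the inductive step, assume $\vph(\omega_S)=0$ whenever $|S|<k$, with $k\geq 4$. By Proposition~\ref{prop:romega} again, every $\omega_S$ with $|S|=k$ is the image of $\omega_{\{1,\dots,k\}}$ under a product of automorphisms $r_i$, so by the same intertwining argument it suffices to prove $\vph(\omega_{\{1,\dots,k\}})=0$. Here I would apply Corollary~\ref{coro:deltaW} to $\omega_{\{1,\dots,k-1\}}$ with $i=k-1$, which gives
\[
\delta_{k-1}\,\omega_{\{1,\dots,k-1\}}=\omega_{\{1,\dots,k\}}+\omega_{\{1,\dots,k-2,k\}}+\omega_{\{1,\dots,k-1\}}.
\]
Applying $\vph$ and using $\vph\circ\delta_{k-1}=\Delta_{k-1}\circ\vph$, the left-hand side equals $\Delta_{k-1}\bigl(\vph(\omega_{\{1,\dots,k-1\}})\bigr)=0$ by the induction hypothesis; on the right-hand side the last two terms vanish as well, since they correspond to subsets of cardinality $k-1$. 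Hence $\vph(\omega_{\{1,\dots,k\}})=0$, completing the induction.

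The computation is genuinely easy once the machinery is in place, so the main obstacle is not computational. The only input carrying real content is the base case $\vph(\varOmega_{1,2,3})=0$, the external $\mathfrak{aw}(3)$ fact, and I would be careful to state clearly why it persists in ${\rm U}_q(\mathfrak{sl}_2)^{\otimes n}$ for all $n$. I would also make sure to invoke the intertwining identities $\vph\circ\delta_i=\Delta_i\circ\vph$ and $\vph\circ r_i=\rho_i\circ\vph$ as genuine equalities of algebra maps (they follow from $\vph$ being a morphism together with the realisations of $\delta_i$, $r_i$ described above in Section~\ref{sec-connections}), since the entire reduction rests on pushing $\vph$ through $\delta_i$ and $r_i$.
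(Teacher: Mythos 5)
Your proposal is correct and follows essentially the same route as the paper: the known vanishing of $\vph(\omega_{\{1,2,3\}})$ is propagated to all $\omega_S$ by pushing $\vph$ through the coproduct maps and the braid automorphisms, using Corollary~\ref{coro:deltaW} and Proposition~\ref{prop:romega}. Your explicit double induction (on $|S|$, then reducing to $\omega_{\{1,\dots,k\}}$ via the symmetric group action) merely fills in the details the paper leaves implicit.
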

\begin{proof}
The fact that $\vph(\omega_{\{1,2,3\}})=0$ is known, see, e.g.,~\cite{avatar}. The general statement then follows from the fact that any $\omega_S$ can be obtained from $\omega_{\{1,2,3\}}$ through suitable applications of the coproduct maps $\delta_i$ and the automorphisms $r_i$, $\rb_i$, see Corollary~\ref{coro:deltaW} and Proposition~\ref{prop:romega}.
\end{proof}

We do not know a set of generators for the kernel of $\vph$ in general. For $n=3$, it is known that $\omega_{\{1,2,3\}}$ generates the kernel~\cite{avatar}.
For $n=4$, a generating set was given in~\cite[Appendix~A]{CL}. We were able to check
 that all the expressions\footnote{By expression, we mean here the right-hand side
minus the left-hand side of each relation.} in~\cite[Appendix~A]{CL} correspond to central elements in $\mathfrak{aw}(4)$, so that the kernel of $\vph$ is generated by central elements for $n=4$. Moreover, we also showed that the so-called \textit{loop triple relations}, \textit{link triple relations} and \textit{double and triple crossing relations} in~\cite{CL} are identically zero in the algebra
$\mathfrak{aw}(4)$, but we still do not know if the kernel is generated by the central elements $\omega_S$. Similarly, for $n\geq5$, it would be interesting too compare~$\aw$ with the defining relations of the Kauffman bracket skein algebra given in~\cite{Skein5}.

\section{Limit to the Racah algebra}\label{sec-limit}

In this section, we consider the limit $q\to1$ in the relations of the algebra $\mathfrak{aw}(n)$, and show how to recover some relations of the Racah algebra $R(n)$. In particular, we find all the defining relations of $R(4)$, as described in the appendix of~\cite{CFR}.

We use the following change of generators, for any connected subset $I$,
\begin{equation*}
C_I = \frac{\big(q-q^{-1}\big)^2}{q+q^{-1}} K_I+1.
\end{equation*}
For any expression in $\aw$ in terms of the generators $C_I$, we replace $C_I$ using the above change of generators, and then look at the first non-trivial coefficient in the expansion around $q=1$.
\subsection{Relations with 3 subsets}
Let $I_1$, $I_2$, $I_3$ be a monotonic sequence of adjacent subsets.
The first non-trivial coefficients in relations~\eqref{relaw31v} and~\eqref{relaw32} in the limit $q\to1$ give
\begin{subequations}
\begin{align}
\label{eq:Rac1}
\frac{1}{2}\big[K_{I_1I_2}, [K_{I_1I_2},K_{I_2I_3}] \big] ={}& K_{I_1I_2}^2 + \{K_{I_1I_2},K_{I_2I_3}\} - (K_{I_1}+K_{I_2}+K_{I_3}+K_{I_1I_2I_3})K_{I_1I_2} \nonumber\\
&{}- (K_{I_1}-K_{I_2})(K_{I_3}-K_{I_1I_2I_3}) , \\
\label{eq:Rac2}
\frac{1}{2}\big[K_{I_2I_3}, [K_{I_2I_3},K_{I_1I_2}] \big] ={}& K_{I_2I_3}^2 + \{K_{I_1I_2},K_{I_2I_3}\} - (K_{I_1}+K_{I_2}+K_{I_3}+K_{I_1I_2I_3})K_{I_2I_3} \nonumber\\
&{}- (K_{I_1}-K_{I_1I_2I_3})(K_{I_3}-K_{I_2}) ,
\end{align}
\end{subequations}
where $\{A,B\}=AB+BA$ is the anti-commutator.
One recognises the relations defining an algebra $R(I_1,I_2,I_3)$ isomorphic to the Racah algebra $R(3)$, see, e.g.,~\cite{avatar}, where such limit was already considered for $n=3$.

\subsection{Relations with 4 subsets}
Next, we consider a monotonic sequence of adjacent subsets $I_1$, $I_2$, $I_3$, $I_4$, and the relations~\eqref{relaw4a}--\eqref{relaw4e}. We use the notation with a prime on the number of the equation
when we consider it by exchanging the order of the subsets $(I_1,I_2,I_3,I_4)\to(I_4,I_3,I_2,I_1)$.
It is easy to see that all the relations~\eqref{relaw4a}--\eqref{relaw4e} as well as their prime versions
have the same first non-trivial coefficient in the limit $q\to 1$:
\begin{gather}\label{eq:cub0}
 [K_{I_1I_2},K_{I_2I_3}]\! + \![K_{I_2I_3},K_{I_3I_4}]\! -\! [K_{I_1I_2I_3},K_{I_3I_4}]\! - \![K_{I_1I_2},K_{I_2I_3I_4}]\! + \![K_{I_1I_2I_3},K_{I_2I_3I_4}]\! = \!0,
\end{gather}
One recognises one of the relations defining an algebra $R(I_1,I_2,I_3,I_4)$ isomorphic to the Racah algebra $R(4)$:
it corresponds to the relation (A.13) of~\cite{CFR}.

Now, considering the sums~\eqref{relaw41}+\eqref{relaw43}$'$,~\eqref{relaw41}$'$+\eqref{relaw43}, \eqref{relaw42}+\eqref{relaw44}$'$,
\eqref{relaw42}$'$+ \eqref{relaw44},
\eqref{relaw45}+\eqref{relaw46}$'$,~\eqref{relaw45}$'$+\eqref{relaw46} leads to six new relations.
For example, \eqref{relaw41}+ \eqref{relaw43}$'$ gives
 \begin{gather*}
\frac12\big[K_{I_3I_4} , [K_{I_1I_2},K_{I_2I_3}]\big] =
K_{I_1I_2}(K_{I_2I_3}+K_{I_3I_4}-K_{I_2I_3I_4}-K_{I_3}) +K_{I_2I_3}(K_{I_3I_4}-K_{I_1I_2I_3I_4})
\\
\hphantom{\frac12\big[K_{I_3I_4} , [K_{I_1I_2},K_{I_2I_3}]\big] =}{}-K_{I_3I_4} K_{I_2}
+K_{I_1I_2I_3}(K_{I_2I_3I_4}-K_{I_3I_4}-K_{I_2})-K_{I_2I_3I_4} K_{I_3}
\\
\hphantom{\frac12\big[K_{I_3I_4} , [K_{I_1I_2},K_{I_2I_3}]\big] =}{}
+(K_{I_2}+K_{I_3}) K_{I_1I_2I_3I_4}+K_{I_2} K_{I_3} ,
\end{gather*}
where we have used~\eqref{eq:cub0} to simplify the right-hand side of the relation.
Among these six relations, four of them
are (A.14)--(A.17) in the notations of~\cite{CFR}, where the indices $1$, $2$, $3$, $4$ are replaced by the subsets $I_1$, $I_2$, $I_3$, $I_4$. The remaining ones can be obtained using these four relations and the relation~\eqref{eq:cub0}.

The four relations, together with~\eqref{eq:Rac1}--\eqref{eq:Rac2} and~\eqref{eq:cub0}, form a complete set of defining relations for the algebra $R(I_1,I_2,I_3,I_4)$ isomorphic to the Racah algebra $R(4)$, see \cite[Appendix~A]{CFR} (it can be shown that the remaining defining relations
in~\cite[Appendix~A]{CFR} are consequences for these ones).

The relations obtained above involving 3 and 4 subsets of $\{1,\dots,n\}$ are a set of defining relations of the so-called higher rank Racah algebra $R(n)$ as studied, e.g., in~\cite{CGPV}. In fact, it is easy to show that the defining relations (3.2f) and (3.2g) in~\cite{CGPV} are a consequence of the others.

\subsection{Casimir elements}
The first non-trivial coefficient in the expression~\eqref{eq:aw3cas} of $\varOmega_{1,2,3}$ provides the Casimir element~$w_{123}$ of~$R(3)$, see~\cite{avatar}.
Similarly, for $\varOmega_{I_1,I_2,I_3}$, it provides the Casimir element of the algebra $R(I_1,I_2,I_3)$.

 Up to a global multiplicative constant, using the calculation made in~\cite{CFR}, it follows that the central element $\omega_{\{1,2,3,4\}}$ provides in the limit the central element of $R(n)$ called $x_{1234}$ in~\cite{CFR, CGPV}.

 We note that in the Racah algebra $R(n)$ the natural way to build central elements involving~5 and~6 indices leads to elements which were shown to be equal to zero in the algebra $R(n)$~\cite{CGPV}. We do not know if and how this fact has its counterpart on our central elements $\omega_S$ of $\aw$.

\appendix

\section{Details for some proofs}\label{sec-appendix}

All along the proofs, for simplicity, we will use a notation which omits the accolades and the symbol for union of sets, for example, as
\[I\backslash a, \quad \text{for}\ I\backslash\{a\}\qquad \text{and}\qquad Ia\ \text{or}\ aI, \quad \text{for}\ I\cup\{a\} .\]
The last one, using $Ia$ or $aI$, will not lead to any ambiguity since it will be used only when $a$ is adjacent to $I$ so that $C_{Ia}=C_{aI}$ with our conventions.

\subsection[Proof of the morphism property for r\_i, bar r\_i, i=1,...,n-1]{Proof of the morphism property for $\boldsymbol{r_i}$, $\boldsymbol{\rb_i}$, $\boldsymbol{i=1,\ldots,n-1}$}
The morphism property is what remains to be done for the proof of Theorem~\ref{theo-matricesR}. We need to check that the maps $r_i$, $\rb_i$ for $i=1,\dots,n-1$, given on the generators by (\ref{def-ri})--(\ref{def-rbi}) preserve the defining relations (\ref{relcommv})--(\ref{relaw41v}) of $\aw$. We will use Proposition~\ref{prop-ri} without mentioning.

As a first reduction, recall that $\rb_i(X^{\rm up})=(r_i(X))^{\rm up}$. Therefore, if we prove that the maps $r_i$, $i=1,\dots,n-1$ are morphisms, it will imply that the maps $\rb_i$, $i=1,\dots,n-1$, are morphisms as well. So we only need to deal with the maps $r_i$.

{\bf Commutation relations.} We start with the commutation relations (\ref{relcommv}). We take~$I$ and~$J$ as in~(\ref{relcommv}). There are several cases to consider.
\begin{itemize}\itemsep=0pt
\item Assume that $r_i(C_I)=C_I$ and $r_i(C_J)=C_J$. Then there is nothing to do.

\item Assume that $r_i(C_I)=C_I$ and $r_i(C_J)\neq C_J$ (the case with $I$ and $J$ exchanged is similar). We use the action of $r_i$ given in Proposition~\ref{prop-ri}. With notations such that $\{a\}=J\cap \{i,i+1\}$ and $\{a,b\}=\{i,i+1\}$, we must check that
\[[C_I, -[C_{i,i+1},C_{J}]_q+C_bC_{J\backslash a}+C_{a}C_{Jb}]=0 .\]
This is true since $C_I$ commutes, using the defining commutation relations, with all terms appearing in the expression for $r_i(C_J)$.

\item Assume that $r_i(C_I)\neq C_I$ and $r_i(C_J)\neq C_J$ and $I\cap J=\varnothing$. This happens when
\[\ \ \ldots \bullet,\underbrace{\bullet,\dots,\bullet}_{I_1},a,b,\underbrace{\bullet,\dots,\bullet}_{I_4},\bullet,\ldots ,\]
with $\{a,b\}=\{i,i+1\}$ and $I=I_1a$ and $J=bI_4$. Applying $r_i$ on $[C_I,C_J]=0$ gives the relation
\[[C_{bI_1},C_{aI_4}]=0 .\]
This is a particular case of relation~\eqref{rel:coma1} proved in Lemma~\ref{lemm:relcom2}.

\item Finally, assume that $r_i(C_I)\neq C_I$ and $r_i(C_J)\neq C_J$ and $I\subset J=\varnothing$. This happens when
\[\ \ \ldots \bullet,a,b,\underbrace{\bullet,\dots,\bullet}_{I_3},\underbrace{\bullet,\dots,\bullet}_{I_4},\bullet,\ldots ,\]
with $\{a,b\}=\{i,i+1\}$ and $I=bI_3$ and $J=bI_3I_4$. Applying $r_i$ on $[C_I,C_J]=0$ gives the relation:
\[[C_{aI_3},C_{aI_3I_4}]=0 .\]
This is a particular case of relation~\eqref{rel:coma3}, proved in Lemma~\ref{lemm:relcom2}.
\end{itemize}

{\bf The remaining relations.} These relations are of the form
\begin{equation}\label{app-relKIJ}
C_{K}=-[C_{I},C_{J}]_q+C_{I\backslash J}C_{J\backslash I}+C_{I\cap J}C_{I\cup J} .
\end{equation}
We have three different situations which may happen when acting with $r_i$. Consider the two elements inside the $q$-commutator:
\begin{itemize}\itemsep=0pt
\item The action of $r_i$ leaves both of them invariant. This happens when $\{i,i+1\}$ is disjoint from every occurring subset. In this case, $r_i$ leaves stable every elements appearing in the relation, and the relation is trivially preserved.
\item The action of $r_i$ only leaves one of them invariant. We will deal with these cases just below.
\item Finally, the action of $r_i$ is non-trivial on both elements. In this case, one can check that $r_i$ leaves invariant the left hand side. These are the difficult cases. We will deal with them one by one.
\end{itemize}
We treat the second situation. We must apply $r_i$ on a relation~\eqref{app-relKIJ}
and we assume that $r_i(C_J)=C_J$ while $r_i$ acts non-trivially on $C_I$ (the other case is completely similar). There are two possible situations: $\{i,i+1\}\cap J=\varnothing$ or $\{i,i+1\}\subset J$. We will give the details for the first situation, the other one can be treated in a similar way.

Let $\{a\}=\{i,i+1\}\cap I$ and $\{a,b\}=\{i,i+1\}$. We apply $r_i$ on the right hand side of the relation and, according to the action of $r_i$ given in Proposition~\ref{prop-ri}, we find
\begin{gather*}
 -[r_i(C_{I}),C_{J}]_q+r_i(C_{I\backslash J})C_{J\backslash I}+C_{I\cap J}r_i(C_{I\cup J})\\
 \qquad = -[C_{i,i+1},-[C_{I},C_{J}]_q+C_{I\backslash J}C_{J\backslash I}+C_{I\cap J}C_{I\cup J}]_q + XX \\
 \qquad = -[C_{i,i+1},C_K]_q + XX .
\end{gather*}
For the first equality, we use the explicit action of $r_i$ and we collect the various $q$-commutators appearing, using $q$-Jacobi. The remaining terms are collected in
\[
XX=-[C_bC_{I\backslash a}+C_aC_{Ib},C_J]_q+\big(C_{I\backslash Ja}C_b+C_aC_{Ib\backslash J}\big)C_{I\backslash J}+C_{I\cap J}\big(C_bC_{IJ\backslash a}+C_aC_{IJb}\big) .\]
In every cases fitting this situation, it is straightforward to check that
\begin{gather*}
-\big[C_{I\backslash a},C_J\big]_q=C_{K\backslash a}+C_{I\backslash aJ}C_{J\backslash I}+C_{I\cap J}C_{IJ\backslash a} ,\\
-[C_{Ib},C_J]_q=C_{Kb}+C_{Ib\backslash J}C_{J\backslash I}+C_{I\cap J}C_{IbJ} ,
\end{gather*}
and we find that $XX$ is equal to $C_{K\backslash a}C_b+C_aC_{Kb}$. Comparing with the action of $r_i$ on the left hand side of~\eqref{app-relKIJ}, we get
$r_i(C_K) = -[C_{i,i+1},C_K]_q +C_{K\backslash a}C_b+C_aC_{Kb}$, which is true (Proposition~\ref{prop-ri}).

{\bf The remaining case of relation (\ref{relaw41v}).} From now on, we need only to treat the cases where~$r_i$ acts non-trivially on both sides of the $q$-commutator. For this relation, this means that we have
\begin{equation}\label{Case41} \ldots \bullet,\underbrace{\bullet,\dots,\bullet}_{I_1},\underbrace{\bullet,\dots,\bullet,a}_{I_2},\underbrace{b,\bullet,\dots,\bullet}_{I_3},\underbrace{\bullet,\dots,\bullet}_{I_4},\bullet,\ldots ,\qquad \text{where $\{a,b\}=\{i,i+1\}$.}\end{equation}
We start with some preparations. For two disjoint connected subsets $I_1$ and $I_4$, consider the set of relations
\begin{equation}\label{Rel-I1-I3}C_{I_1I_4}=-[C_{I_1H},C_{HI_4}]_q+C_{I_1}C_{I_4}+C_{H}C_{I_1HI_4}
\end{equation}
for any connected subset $H$ between $I_1$ and $I_4$ adjacent to either $I_1$ or $I_4$. These relations are satisfied, due to the defining relation (\ref{relaw41v}) and relation~\eqref{relaw43}. Denote $\operatorname{dist}(I_1,I_4)$ the size of the hole between $I_1$ and $I_4$.
\begin{prop}\label{prop-equiv}
For given $I_1$ and $I_4$, the relations in the set \eqref{Rel-I1-I3} are all equivalent modulo the commutation relations and the relations of the type \eqref{Rel-I1-I3} for subsets $I'_1$ and $I'_4$ with $\operatorname{dist}(I'_1,I'_4)<\operatorname{dist}(I_1,I_4)$.
\end{prop}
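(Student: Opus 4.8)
The plan is to fix $I_1<I_4$ and write their hole as $G=\{g_1<\dots<g_m\}$ with $m=\operatorname{dist}(I_1,I_4)$. The connected subsets $H$ admissible in \eqref{Rel-I1-I3} are exactly the left-initial segments $\{g_1,\dots,g_j\}$ (adjacent to $I_1$) and the right-final segments $\{g_j,\dots,g_m\}$ (adjacent to $I_4$); in particular the choice $H=G$ belongs to both families and reproduces the defining relation \eqref{def-CI2}. Since every relation in \eqref{Rel-I1-I3} has the same left-hand side $C_{I_1I_4}$, proving that two such relations are equivalent modulo the allowed relations is the same as proving that their right-hand sides are equal modulo those relations. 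By transitivity it therefore suffices to connect each admissible $H$ to the full hole $G$ through a chain of one-element moves, so the whole statement reduces to a single elementary step: comparing the right-hand side for $H=\{g_1,\dots,g_j\}$ with the one for $\tilde H=H\cup\{g_{j+1}\}$ (the case of a right-final segment growing on the left being the mirror image).

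For this step I would first note that in the right-hand side of \eqref{Rel-I1-I3} for $H$ the only elements carrying a hole are $C_{HI_4}$ and $C_{I_1HI_4}$, both with hole $\{g_{j+1},\dots,g_m\}$ of size $m-j<m$. I expand each of them by a relation of type \eqref{Rel-I1-I3} applied to the pairs $(H,I_4)$ and $(I_1H,I_4)$, splitting off the single element $g:=g_{j+1}$ adjacent to $H$ (resp. to the connected block $I_1H$); both of these have distance $m-j<m$ and hence lie in the set of lower-distance relations we are allowed to use. After substituting these expansions, the two copies of $C_{I_1}C_{I_4}$ cancel, and the two terms of the shape $C_{I_1H}C_HC_{I_4}$ cancel because $C_{I_1H}$ commutes with $C_H$ by \eqref{relcommv}.

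What remains are nested $q$-commutators together with terms carrying the factors $C_H$ and $C_g$. The key simplification is that $C_{I_1H}$ commutes with $C_{gI_4}$ (the support of $C_{gI_4}$ is disjoint from $I_1H$, by \eqref{relcommv} and Lemma~\ref{lem:Commg}(2)), so the $q$-Jacobi identity \eqref{eq:qjacobi1} reorganizes the nested bracket $[C_{I_1H},[C_{\tilde H},C_{gI_4}]_q]_q$ into $[[C_{I_1H},C_{\tilde H}]_q,C_{gI_4}]_q$ with a vanishing correction term. I then rewrite the inner bracket $[C_{I_1H},C_{\tilde H}]_q$ through the one-hole formula \eqref{def-CI2} associated with the adjacent triple $(I_1,H,\{g\})$, which is again a distance-$j<m$ relation, and collect the resulting products. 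After this reorganization the right-hand side reassembles exactly into $-[C_{I_1\tilde H},C_{\tilde H I_4}]_q+C_{I_1}C_{I_4}+C_{\tilde H}C_{I_1\tilde H I_4}$, i.e.\ the right-hand side of \eqref{Rel-I1-I3} for $\tilde H$. The step for right-final segments then follows by applying the antiautomorphism ${\cdot}^{\rm up}$ of \eqref{eq:up} (which sends $C_{I_1I_2}$ to $C_{I_2I_1}$ and reverses the order), so that no independent computation is needed there.

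I expect the main obstacle to be purely the bookkeeping in this last reorganization: one must track every scalar $q$-factor and repeatedly justify which elements commute (all such commutations come from \eqref{relcommv} and Lemma~\ref{lem:Commg}(2), but they are numerous), and in particular verify that the expansion of $C_{I_1HI_4}$ produces precisely the term $C_{\tilde H}C_{I_1\tilde H I_4}$ once $[C_{I_1H},C_H]=0$ is used. The computation is structurally identical to the proof of hole-independence in Lemma~\ref{lem:Commg}(1), and its only genuine content is the single application of \eqref{eq:qjacobi1}; everything else is routine collection of lower-distance terms.
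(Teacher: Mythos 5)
Your global strategy (all the right-hand sides share the same left-hand side $C_{I_1I_4}$, so it suffices to connect the admissible $H$'s by a chain of elementary moves) is sound, but the elementary move you chose --- comparing $H=\{g_1,\dots,g_j\}$ with $\tilde H=H\cup\{g\}$, $g=g_{j+1}$, by resolving $C_{HI_4}$ and $C_{I_1HI_4}$ through the singleton $\{g\}$ adjacent to the \emph{near} end $H$ --- does not close up, and the claimed reassembly is incorrect. Carrying out exactly the manipulations you prescribe (substitute the two expansions; note that what cancels is $C_{I_1H}C_HC_{I_4}$ against $C_HC_{I_1H}C_{I_4}$, while the single term $C_{I_1}C_{I_4}$ survives, contrary to your "two copies of $C_{I_1}C_{I_4}$ cancel"; apply \eqref{eq:qjacobi1}; replace $[C_{I_1H},C_{\tilde H}]_q$ by $-C_{I_1,g}+C_{I_1}C_g+C_HC_{I_1\tilde H}$) leaves
\[
-[C_{I_1,g},C_{gI_4}]_q+C_{I_1}C_gC_{gI_4}-C_g[C_{I_1H},C_{\tilde HI_4}]_q+C_HC_gC_{I_1\tilde HI_4}+C_{I_1}C_{I_4}.
\]
This is not the right-hand side for $\tilde H$: the term $C_HC_gC_{I_1\tilde HI_4}$ is not $C_{\tilde H}C_{I_1\tilde HI_4}$ (the product $C_HC_g$ of two generators is not the single generator $C_{Hg}$), and the leftover bracket $[C_{I_1H},C_{\tilde HI_4}]_q$ can only be resolved through the two-hole element $C_{I_1,g,I_4}$, after which the whole expression collapses to $-[C_{I_1,g},C_{gI_4}]_q+C_{I_1}C_{I_4}+C_gC_{I_1,g,I_4}$, i.e.\ the relation for the pair $(I_1,I_4)$ with intermediate subset $\{g\}$ sitting strictly inside the hole. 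For $1\le j\le m-2$ this subset is adjacent to neither $I_1$ nor $I_4$, so you land precisely on a relation that is \emph{excluded} from the set \eqref{Rel-I1-I3} and is not a lower-distance relation of that type; your step therefore does not stay inside the allowed toolkit.

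The repair is the paper's move: resolve $C_{HI_4}$ (and $C_{I_1HI_4}$) through a subset $H_2$ adjacent to the \emph{far} end $I_4$. The same application of \eqref{eq:qjacobi1} then produces the inner bracket $[C_{I_1H},C_{HH_2}]_q$, which is resolved by the relation for the pair $(I_1,H_2)$ with subset $H$ adjacent to $I_1$ --- a legitimate lower-distance relation of type \eqref{Rel-I1-I3} --- and everything genuinely recombines into the relation for $H_2$. This links any initial segment directly to any disjoint final segment, which already connects all admissible $H$'s by transitivity, so your plan of chaining one-element growths through the full hole $G$ is both unnecessary and (as written) unavailable. Your reduction of the final-segment case to the initial-segment case via the antiautomorphism ${\cdot}^{\rm up}$ is fine.
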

\begin{proof}
Assume that $H$ is adjacent to $I_1$. Split the hole between $H$ and $I_4$ into $H_1$ and $H_2$ according to the following picture:
\[\ \ \ldots \bullet,\underbrace{\bullet,\dots,\bullet}_{I_1},\underbrace{\bullet,\dots,\bullet}_{H},\underbrace{\bullet,\dots,\bullet}_{H_1},\underbrace{\bullet,\dots,\bullet}_{H_2},\underbrace{\bullet,\dots,\bullet}_{I_4},\bullet,\ldots .\]
Then in the right hand side of (\ref{Rel-I1-I3}), replace $C_{HI_4}$ by its expression starting with $[C_{HH_2},C_{H_2I_4}]_q$, obtained from (\ref{Rel-I1-I3}) with $I_1$ replaced by $H$. Using the $q$-Jacobi relation and also relation (\ref{Rel-I1-I3}) with $I_4$ replaced by $H_2$, we find that the relation (\ref{Rel-I1-I3}) becomes
\[C_{I_1I_4}=-[C_{I_1H_2},C_{H_2I_4}]_q+C_{I_1}C_{I_4}+C_{H_2}C_{I_1H_2I_4} .\]
This proves the proposition, since $H_2$ was arbitrarily chosen (adjacent to $I_4$).
\end{proof}

Now we come back to the proof for the defining relation (\ref{relaw41v}) in the situation (\ref{Case41}), and we reason by induction on $\operatorname{dist}(I_1,I_4)$ (which is at least 2). If the hole between $I_1$ and $I_4$ contains strictly more than two elements, thanks to Proposition~\ref{prop-equiv} and using the induction hypothesis, we can replace relation (\ref{relaw41v}) by the one where $I_3$ is replaced by a subset $H$ either adjacent to~$I_1$ or to $I_4$ and such that $H\cap\{i,i+1\}=\varnothing$. This relation is trivially preserved by $r_i$.

Finally assume that $\operatorname{dist}(I_1,I_4)=2$, so that $I_2=\{a\}$ and $I_3=\{b\}$. The action of $r_i$ on the relation gives
\[C_{I_1I_4}=-[C_{I_1a},C_{aI_4}]_q+C_{I_1}C_{I_4}+C_{a}C_{I_1aI_4} ,\]
where we have used Proposition~\ref{prop-act-ri} for the action of $r_i$. This relation is proved in~\eqref{relaw43}.

{\bf The remaining cases of relation (\ref{relaw31v}).} Recall that the relation is
\begin{equation}\label{relaw31v-app}
C_{I_1I_2}=-[C_{I_2I_3},C_{I_1I_3}]_q+C_{I_1}C_{I_2}+C_{I_3}C_{I_1I_2I_3} .
\end{equation}
Using the definition of $C_{I_1I_3}$ and the $q$-Jacobi relation, we find the equivalent form of this relation:
\begin{equation}\label{relaw31v-app-eq}
C_{I_1I_2}=-[C_{I_3I_1},C_{I_2I_3}]_q+C_{I_1}C_{I_2}+C_{I_3}C_{I_1I_2I_3} .
\end{equation}
Proving that (\ref{relaw31v-app}) is preserved by $r_i$ is equivalent to proving that (\ref{relaw31v-app-eq}) is preserved by $r_i$, since we only used the commutation relations (which are already proven to be preserved by $r_i$) to move from one to the other.

Recall that it remains only to treat the case where $r_i$ acts non-trivially on both sides of the $q$-commutator in (\ref{relaw31v-app}). This happens when $\{i,i+1\}$ is either $\{a,b\}$ or $\{c,d\}$ as pictured below
\[ \ldots \bullet,\underbrace{\bullet,\dots,\bullet,a}_{I_1},\underbrace{b,\bullet,\dots,\bullet}_{I_2},\underbrace{\bullet,\bullet,\dots,c}_{I_3},d,\bullet,\ldots.\]

\textbf{Case 1: $\boldsymbol{\{i,i+1\}=\{a,b\}}$.} We are going to reason by induction on the size of $I_1$. So first, assume that $I_1=\{a\}$. We denote $I_2=bI'_2$. Using the equivalent form (\ref{relaw31v-app-eq}), the action of $r_i$ gives, according to Proposition~\ref{prop-act-ri}:
\[C_{abI'_2}=-[C_{I_3b},C_{aI'_2I_3}]_q+C_{b}C_{aI'_2}+C_{I_3}C_{abI'_2I_3} .\]
Such a relation was proven in Lemma~\ref{lemm:relcom2}, relation~\eqref{eq:2h5}.

Now assume that we can split $I_1$ into a union of two non-empty connected subsets $I_0\cup I'_1$ with $I'_1$ adjacent to $I_2$. Relation (\ref{relaw31v-app}) reads now as
\begin{equation}\label{relaw31v-app2}C_{I_0I'_1I_2}=-[C_{I_2I_3},C_{I_0I'_1I_3}]_q+C_{I_0I'_1}C_{I_2}+C_{I_3}C_{I_0I'_1I_2I_3} .
\end{equation}
Now we are going to use the following relations:
 \begin{gather}
C_{I_0I'_1I_3} =-[C_{I_2I_0},C_{I'_1I_2I_3}]_q+C_{I_0}C_{I'_1I_3}+C_{I_2}C_{I_0I'_1I_2I_3}, \label{app-inter11} \\
C_{I_3I_0} =-[C_{I_3I_2},C_{I_2I_0}]_q+C_{I_3}C_{I_0}+C_{I_2}C_{I_3I_2I_0},\label{app-inter12} \\
C_{I_0I'_1} =-[C_{I_3I_2I_0},C_{I'_1I_2I_3}]_q+C_{I_0}C_{I'_1}+C_{I_2I_3}C_{I_0I'_1I_2I_3},\label{app-inter13}\\
C_{I_0I'_1I_2} =-[C_{I_3I_0}, C_{I_1I_2I_3}]_q +C_{I_0}C_{I'_1I_2}+C_{I_3}C_{I_0I'_1I_2I_3},\label{app-inter14}\\
C_{I'_1I_2} =-[C_{I_2I_3},C_{I'_1I_3}]_q+C_{I'_1}C_{I_2}+C_{I_3}C_{I'_1I_2I_3}.\label{app-inter15}
 \end{gather}
We use~\eqref{app-inter11} to replace $C_{I_0I'_1I_3}$ in the $q$-commutator in (\ref{relaw31v-app2}), and use all the others to calculate the resulting expression, and we find that the relation simply becomes
\begin{equation}\label{relaw31v-app3}C_{I_0I'_1I_2}=-[C_{I_3I_0},C_{I'_1I_2I_3}]_q+C_{I_0}C_{I'_1I_2}+C_{I_3}C_{I_0I'_1I_2I_3} .
\end{equation}
It is easy to check that the relations~\eqref{app-inter11}--\eqref{app-inter15} we have been using are all valid (they are either defining relations, or were proved in Lemma~\ref{prop:def-nico}). For relations~\eqref{app-inter11}--\eqref{app-inter14}, the map $r_i$ leaves at least one member of the $q$-commutator invariant, and thus these relations are preserved by $r_i$ (see the beginning of the proof of the remaining relations above). The stability of~\eqref{app-inter15} by $r_i$ is the induction hypothesis since this is indeed (\ref{relaw31v-app}) with $I_1$ replaced by $I'_1$. Therefore, we have been using relations that we know are preserved by $r_i$ to transform~(\ref{relaw31v-app2}) into the relation~(\ref{relaw31v-app3}). This latter relation is trivially preserved by~$r_i$. We conclude that the relation~(\ref{relaw31v-app2}) that we started with is preserved by $r_i$.

\textbf{Case 2: $\boldsymbol{\{i,i+1\}=\{c,d\}}$.} We are going to reason by induction on the size of $I_3$. So first, assume that $I_3=\{c\}$. Using the equivalent form (\ref{relaw31v-app-eq}), the action of $r_i$ gives, according to Proposition~\ref{prop-act-ri} as
\[C_{I_1I_2}=-[C_{dI_1},C_{dI_2}]_q+C_{I_1}C_{I_2}+C_{d}C_{dI_2I_1} .\]
Such a relation was proven in Lemma~\ref{lemm:relcom2}, relation~\eqref{eq:2h2} (with $(I_1,I_2,I_3,I_4)\to (d,c,I_2,I_1)$).

Now assume that we can split $I_3$ into a union of two non-empty connected subsets $I'_3\cup I_4$ with $I'_3$ adjacent to $I_2$. Relation (\ref{relaw31v-app}) reads now as
\begin{equation}\label{relaw31v-app4}C_{I_1I_2}=-[C_{I_2I'_3I_4},C_{I_1I'_3I_4}]_q+C_{I_2}C_{I_1}+C_{I'_3I_4}C_{I_1I_2I'_3I_4} .
\end{equation}
Now we are going to use the following relations:
 \begin{gather}
C_{I_1I'_3I_4} =-[C_{I_1I_2I_4},C_{I_2I'_3}]_q+C_{I_1I_4}C_{I'_3}+C_{I_2}C_{I_1I_2I'_3I_4} , \label{app-inter21} \\
C_{I'_3I_1} =-[C_{I_2I'_3I_4},C_{I_1I_2I_4}]_q+C_{I_1}C_{I'_3}+C_{I_2I_4}C_{I_1I_2I'_3I_4} ,\label{app-inter22} \\
C_{I'_3I_4} =-[C_{I_2I_4},C_{I_2I'_3}]_q+C_{I_4}C_{I'_3}+C_{I_2}C_{I_2I'_3I_4} ,\label{app-inter23}\\
C_{I_1I_2I'_3} =-[C_{I_2I'_3I_4}, C_{I_1I_4}]_q +C_{I_2I'_3}C_{I_1}+C_{I_4}C_{I_1I_2I'_3I_4} .\label{app-inter24}
 \end{gather}
We use~\eqref{app-inter21} to replace $C_{I_0I'_1I_3}$ in the $q$-commutator in (\ref{relaw31v-app4}), and use all the others to calculate the resulting expression, and we find that the relation simply becomes
\begin{equation*}
C_{I_1I_2}=-[C_{I'_3I_1},C_{I_2I'_3}]_q+C_{I_1}C_{I_2}+C_{I'_3}C_{I_1I_2I'_3} .
\end{equation*}
Now this final relation is trivially preserved by $r_i$. Moreover, it is easy to check that the relations~\eqref{app-inter21}--\eqref{app-inter24} we have been using are all valid (they are either defining relations, or were proved in Lemma~\ref{prop:def-nico}). For relations~\eqref{app-inter21}--\eqref{app-inter23}, the map $r_i$ leaves at least one member of the $q$-commutator invariant, and thus these relations are preserved by $r_i$ (see the beginning of the proof of the remaining relations above). Finally, the stability of~\eqref{app-inter24} is the induction hypothesis since this is indeed (\ref{relaw31v-app}) with $I_3$ replaced by $I_4$ (and $I_2$ replaced by $I_2I'_3$). So, as in Case 1, this concludes the verification.

\subsection[Proof of Proposition~\ref{prop-act-ri} for r\_0]{Proof of Proposition~\ref{prop-act-ri} for $\boldsymbol{r_0}$}
The formula to be proven is $r_0(C_{I_1I_2\dots I_k})=C_{H_kH_{k-1}\dots H_1\, 1}$ when $1\in I_1$ and $I_1<I_2<\dots I_k$ is an increasing sequence of connected subsets, and $H_1<\dots <H_k$ is the complementary sequence in~$\{1,\dots,n\}$.

We use induction on $k$. For $k=1$, this is just the definition of $r_0$. For $k\geq 2$, we use the definition~\eqref{def-CIgen} of $C_{I_1I_2\dots I_k}$ using the hole $H_1$ between $I_1$ and $I_2$, and using the induction hypothesis for the action of $r_0$, we are led to proving
\begin{gather}\label{app-r0-k}
C_{H_kH_{k-1}\dots H_1\, 1}=-[C_{H_kI_kH_{k-1}\dots H_2I_2\, 1},C_{H_1I_2\dots I_k}]_q+C_{H_kI_k\dots H_2I_2H_1\, 1}C_{I_2\dots I_k}\nonumber\\
\hphantom{C_{H_kH_{k-1}\dots H_1\, 1}=}{} +C_{H_1}C_{H_kH_{k-1}\dots H_2 ,1} .\end{gather}
For $k=2$, this reads
\begin{equation}\label{app-r0-2}
C_{H_2H_1\, 1}=-[C_{H_2I_2\, 1},C_{H_1I_2}]_q+C_{H_2I_2H_1\, 1}C_{I_2}+C_{H_1}C_{H_2 ,1} .
\end{equation}
If $\{1\}$ is adjacent to $H_1$, this relation is a particular case of~\eqref{relaw44}. If $\{1\}$ is not adjacent to~$H_1$, the relation we know from~\eqref{relaw44} is~\eqref{app-r0-2} with $1$ replaced by $a$, the letter adjacent to~$H_1$. Then we can apply to that relation a sequence of automorphisms, namely, $\rb_1\dots \rb_{a-1}$, to get~\eqref{app-r0-2}.

Now from~\eqref{app-r0-2}, we can apply a sequence of automorphisms $\rb_i$ to split $I_2$ into a sequence $I'_2<\dots <I'_k$ with holes, and get~\eqref{app-r0-k} in the general case. To show this, let $H_1=\{a,\dots,b-1\}$ and $I_2=\{b,\dots,c\}$. Then relation~\eqref{app-r0-2} reads
\[C_{n\dots c+1 , b-1\dots a ,1}=-[C_{n\dots b ,1},C_{a\dots b \dots c}]_q+C_{n\dots a , 1}C_{b\dots c}+C_{a\dots b-1}C_{n\dots c+1 ,1} .\]
Applying $\rb_c$, according to Proposition~\ref{prop-act-ri}, we get
\[C_{n\dots c+2 , c , b-1\dots a ,1}=-[C_{n\dots b ,1},C_{a\dots b \dots c-1 ,c+1}]_q+C_{n\dots a , 1}C_{b\dots c-1 ,c+1}+C_{a\dots b-1}C_{n\dots c+2 ,c ,1} .\]
Thus we have successfully created $I'_2=\{b,\dots,c-1\}$, $I'_3=\{c+1\}$ and a hole $H_2=\{c\}$ such that $I_2=I'_2I'_3$. We can reproduce this as much as we need to produce as many holes as we want in $I_2$.

\subsection[Proof of the morphism property for r\_0]{Proof of the morphism property for $\boldsymbol{r_0}$} First, we show that $r_0$ preserves the commutation relations~\eqref{relcommv}. We start with the situation where $r_0$ acts non-trivially only on one element of the commutator, that is, with ${[C_{1\dots k},C_{i\dots j}]=0}$ with $1\neq i\leq j$. The image by $r_0$ is ${[C_{n\dots k+1\, 1},C_{i\dots j}]=0}$. This is satisfied by Lemma~\ref{lem:Commg} since if $\{i,\dots,j\}$ is included in (resp. disjoint from) $\{1,\dots,k\}$, then is it disjoint from (resp.\ included in) $\{n,\dots,k+1\}\cup\{1\}$.

Next, we consider the relation $[C_{1\dots k},\!C_{1\dots k'}]\!=\!0$, which is sent by $r_0$ to ${[C_{n\dots k+1\,1},\!C_{n\dots k'+1\,1}]\!=\!0}$. This latter relation is satisfied due to~\eqref{rel:coma3}.

Then we consider~\eqref{relaw31v} and~\eqref{relaw41v}. As for the proof for the other $r_i$'s, the situation where $r_0$ leaves invariant the two elements appearing in the $q$-commutator is easily handled, since $r_0$ leaves invariant every elements appearing in the relation. When $r_0$ leaves invariant only one element in the $q$-commutator, we can apply the same reasoning as around~\eqref{app-relKIJ}, using the formula in Proposition~\ref{prop-ri} and $q$-Jacobi. We omit the details since they follow the same reasoning as for the other $r_i$'s.

We are left with the situation where $r_0$ acts non-trivially on the two elements appearing in the $q$-commutator. This can happen only for relation~\eqref{relaw31v}, when $1\in I_3$ (so that $I_3<I_2<I_1$). Here we use Proposition~\ref{prop-act-ri} to apply $r_0$ and get
\[C_{I_1I_2}=-[C_{I_0I_2\,1},C_{I_0I_1\,1}]_q+C_{I_1}C_{I_2}+C_{I_0I_1I_2\,1}C_{I_0\,1} ,\]
where $I_0$ is the subset adjacent to $I_1$ and going up to $n$. If $\{1\}$ is adjacent to $I_2$, then this is relation~\eqref{eq:2h1}. The general case is obtained with the same reasoning as for~\eqref{app-r0-2}.

\subsection*{Computer programs}
We used formal calculation softwares (FORM and Maple) for some proofs done in this article: the corresponding programs are available upon request to the authors.

\subsection*{Acknowledgements} The authors thank A.~Lacabanne for fruitful discussions.
N.~Cramp\'e and L.~Poulain~d'Andecy thank LAPTh for its hospitality and are supported by the international research project AAPT of the CNRS and the ANR Project AHA ANR-18-CE40-000.
This work is partially supported by Universit\'e Savoie Mont Blanc and Conseil Savoie Mont Blanc grant APOINT.
We also wish to thank the anonymous referees for their valuable comments during the course of revision.

\pdfbookmark[1]{References}{ref}
\LastPageEnding

\end{document}